\newcommand{\R}{\mathbf{R}}
\newcommand{\sig}{\sigma}
\newcommand{{\ba}}{\bf a}
\newcommand{\ve}{\varepsilon}
\newcommand{\la}{\lambda}
\newcommand{\ga}{\gamma}
\newcommand{\Ga}{\Gamma}
\newcommand{\pa}{\partial}
\newcommand{\ra}{\rightarrow}
\newcommand{\del}{\delta}
\newcommand{\al}{\alpha}
\newcommand{\be}{\begin{equation}}
\newcommand{\ee}{\end{equation}}
\newtheorem{lem}{Lemma}{\bf}{\it}
{\it}{\rm}
\newtheorem{rem}{Remark}{\it}{\rm}
{\it}{\rm}
\newtheorem{theorem}{Theorem}
\newtheorem{proposition}{Proposition}
\numberwithin{theorem}{section}
\numberwithin{lem}{section}
\numberwithin{equation}{section}
\numberwithin{proposition}{section}
\numberwithin{corollary}{section}
\title[Global Stability]{Global Stability for a Class of Nonlinear PDE with non-local term}
\author{Joseph G. Conlon and Michael Dabkowski }
\address{(Joseph G. Conlon): University of Michigan\\ Department of Mathematics\\ Ann Arbor,
  MI 48109-1109}
\email{conlon@umich.edu}
\address{(Michael Dabkowski): Lawrence Technical University \\ Department of Mathematics\\ Southfield,
  MI 48075-1058}
\email{mdabkowsk@ltu.edu}
\keywords{nonlinear pde, differential delay equations, optimal control}
\subjclass{35F20,  34K20, 49L20}
\begin{document}

\maketitle

\begin{abstract}
This paper is concerned with establishing global asymptotic stability results for a class of non-linear PDE which have some similarity to the PDE of the 
Lifschitz-Slyozov-Wagner model.  The method of proof does not involve a Lyapounov function. It is shown that stability for the PDE is equivalent to stability for a differential delay equation. Stability for the delay equation is proven by exploiting certain maximal properties. These are established by using the methods of optimal control theory. 
\end{abstract}

    \section{Introduction}
    In this paper we shall be concerned with proving global asymptotic stability for a class of first order non-linear PDE, in which the non-linearity is non-local.  The PDE we study has similarities to the Lifschitz-Slyozov-Wagner (LSW) model \cite{ls,w}, a well-known model of material science, in the sense that it is {\it linear} in its derivatives, with the nonlinearity occurring as a scalar coefficient. In $\S3$ we prove {\it local} asymptotic stability of the the critical point for the PDE by using the theory of Volterra integral equations \cite{grip}. Results from the theory of Volterra integral equations were also used in Niethammer-Vel\'{a}zquez \cite{nv}  to prove local asymptotic stability for the LSW model.
    
Our proof of global asymptotic stability is based on the fact that the stability problem for the PDE is equivalent to proving global stability for a  scalar differential delay equation (DDE)  \cite{hale}. We derive the DDE in $\S4$ and use the theory of Volterra integral equations to prove local asymptotic stability. In $\S8$ we prove global asymptotic stability of the DDE. Being unable to find a suitable Lyapounov function, we resort to a different approach based on exploiting certain maximal properties of the nonlinearity.  This approach to proving stability properties of scalar DDEs seems to have been pioneered by Yorke \cite{yorke}.  The proofs of the maximal properties are contained in $\S7$. We carry this out by using the methods of optimal control theory \cite{fr}. 
    
    We consider the evolution PDE
 \be \label{A1}
 \frac{\pa \xi(y,t)}{\pa t} -h(y) -\frac{\pa\xi(y,t)}{\pa y} +\rho(\xi(\cdot,t))\left[\xi(y,t)-y\frac{\pa \xi(y,t)}{\pa y}\right] \ = \ 0, \quad y>0, \ t>0,
  \ee
  with given non-negative  initial data $\xi(y,0), \ y> 0$.  We assume that the function $h:(0,\infty)\ra\R$ has the properties:
  \be \label{B1}
  h(\cdot) \ {\rm is \ continuous  \ positive,  \ } \lim_{y\ra\infty}h(y)=h_\infty>0, \ {\rm and \ } \int_0^1h(y) \ dy<\infty \ .
  \ee
  Note that the condition (\ref{B1}) allows $h(y)$ to become unbounded as $y\ra 0$. 
  The functional $\rho(\cdot)$ maps continuous nonnegative functions $\zeta:(0,\infty)\ra\R$ to $\R$. 
   For an equilibrium to exist corresponding to $\rho=1/p>0$ we need the equilibrium function $\xi_p(y), \ y> 0,$ to satisfy
  \be \label{C1}
  -h(y) -\frac{d\xi_p(y)}{d y} +\frac{1}{p}\left[\xi_p( y)-y\frac{d \xi_p(y)}{d y}\right] \ = \ 0, \quad  \lim_{y\ra\infty}\xi_p(y)=ph_{\infty} \ .
  \ee 
The solution to (\ref{C1}) is given by 
\be \label{D1}
\xi_p(y) \ = \ (p+y)\int_y^\infty \frac{ph(y')}{(p+y')^2} \ dy' \  .
\ee

We wish to impose conditions on  the functional $\rho(\cdot)$  so that  the equilibrium $\xi_p$ is a global attractor for (\ref{A1}).  To do this we assume there is a positive functional $I(\cdot)$ on continuous nonnegative functions $\zeta:(0,\infty)\ra\R$ with the property that
\be \label{E1}
\frac{1}{p}\frac{d}{dt} I(\xi(\cdot,t)) \ = \ \left[\rho(\xi(\cdot,t))-\frac{1}{p}\right]I(\xi(\cdot,t)) \  \quad {\rm for \ solutions \ } \xi(\cdot,t) \ {\rm of \ (\ref{A1})}  \ . 
\ee
From (\ref{E1}) it follows that if we can show that $\log I(\xi(\cdot,t))$ remains bounded as $t\ra\infty$ then $\rho(\xi(\cdot,t))$ converges as $t\ra\infty$ to $1/p$ in the averaged sense
\be \label{F1}
\lim_{T\ra\infty}\frac{1}{T}\int_0^T \rho(\xi(\cdot,t)) \ dt \ = \ \frac{1}{p} \ .
\ee
 
Let $[\cdot,\cdot]$ denote the Euclidean inner product on $L^2[(0,\infty)]$ and $dI(\zeta(\cdot)):(0,\infty)\ra\R$ the gradient of the functional $I(\cdot)$ at $\zeta(\cdot)$. Then from(\ref{A1}), (\ref{E1}) we have that
\be \label{G1}
p\left[\rho(\zeta(\cdot))-\frac{1}{p}\right]I(\zeta(\cdot)) \ = \ [dI(\zeta(\cdot)), h+D\zeta]+\rho(\zeta(\cdot))[dI(\zeta(\cdot)), yD\zeta-\zeta] \ .
\ee
We conclude from (\ref{G1}) that
\be \label{H1}
\rho(\zeta(\cdot)) \ = \ \frac{I(\zeta(\cdot))+[dI(\zeta(\cdot)), h+D\zeta]}{pI(\zeta(\cdot))+[dI(\zeta(\cdot)), \zeta-yD\zeta]}  \ .
\ee

Our goal in this paper is to prove global existence and asymptotic stability theorems for solutions to (\ref{A1}) in the case when the functional $\rho(\cdot)$ is given by (\ref{H1}), and $I(\cdot)$ belongs to a fairly large class of functionals.  In order to do this we define for $m=1,2,..,$  norms on $C^m$ functions $\zeta:(0,\infty)\ra\R$  by
\be \label{I1}
\|\zeta(\cdot)\|_{m,\infty} \ = \ \sup_{0<y<\infty}\sum_{k=0}^m y^k\left|\frac{d^k\zeta(y)}{dy^k}\right|  \ .
\ee
We assume that  $I(\cdot)$ has the following properties: \\
{\it (a) There exists $\ve_0>0$ such that the functional $\zeta\ra I(\zeta(\cdot))$ from nonnegative continuous functions $\zeta:(0,\infty)\ra\R^+$ to $\R^+$ is independent of $\zeta(y)$ when $0<y<\ve_0$.  \\
(b) For any continuous nonnegative function $\zeta:(0,\infty)\ra\R^+$ the gradient of $I(\cdot)$ at $\zeta(\cdot)$ is an integrable function. Furthermore, the mapping $dI$ from functions $\zeta:(0,\infty)\ra\R^+$ to $L^1(\R^+)$ is uniformly Lipschitz continuous in the $L^\infty(\R^+)$ norm. } That is
\be \label{J1}
\|dI(\zeta_1(\cdot))-dI(\zeta_2(\cdot))\|_{L^1(\R^+)} \ \le \   C\|\zeta_1(\cdot)-\zeta_2(\cdot)\|_{L^\infty(\R^+)}
\ee
for some constant $C$.  \\
{\it (c) For any $M\ge 0$ there exists $c_M>0$ such that $I(\zeta(\cdot))\ge c_M$ if $\zeta(\cdot)$ is non-negative and $\|\zeta(\cdot)\|_\infty\le M$.  \\
(d) The gradient $dI(\zeta(\cdot)):\R^+\ra\R$ is a non-positive function for all continuous $\zeta:(0,\infty)\ra\R^+$ and $\inf_{\zeta(\cdot)}\{I(\zeta(\cdot))+[dI(\zeta(\cdot)),h]\}> 0$. }

An important example of a functional $I(\cdot)$ which satisfies (a), (b), (c), (d) above is given by
\be \label{K1}
I(\zeta(\cdot)) \ = \ \int_{\ve_0}^\infty \frac{a(y)}{[b(y)+\zeta(y)]^q} \ dy \ ,
\ee
where $q>0$ and  $a,b:[\ve_0,\infty)\ra\R^+$ are non-negative functions with the property that $y\ra a(y)/b(y)^r, \ r=q,q+1,q+2$ are integrable on 
$(\ve_0,\infty)$ and $b(y)\ge qh(y), \ y\ge \ve_0$.  

It is evident that if $I(\cdot)$ satisfies (a), (b)  and $\zeta:(0,\infty)\ra\R^+$ satisfies $\|\zeta(\cdot)\|_{1,\infty}<\infty$ then the numerator and denominator of the RHS of (\ref{H1}) are finite, whence $\rho(\zeta(\cdot))$ is finite provided
\be \label{L1}
pI(\zeta(\cdot))+[dI(\zeta(\cdot)), \zeta-yD\zeta] \ > \ 0 \ .
\ee
 In $\S2$ we prove the following existence and uniqueness theorem:
\begin{theorem}
Assume the function $h(\cdot)$ is $C^2$ decreasing, satisfies (\ref{B1}) and $\sup_{y>0}[y|h'(y)|+y^2|h''(y)|]<\infty$. Assume also that the functional $I(\cdot)$ satisfies $(a),(b),(c),(d)$  and in addition that the initial data $\xi(\cdot,0)$ for (\ref{A1}) is $C^2$ non-negative decreasing, $\|\xi(\cdot,0)\|_{2,\infty}<\infty$ and (\ref{L1}) holds for $\zeta(\cdot)=\xi(\cdot,0)$. Then there exists  a unique solution $\xi(\cdot,t), t\ge 0$, globally in time to the initial value problem for (\ref{A1}). Furthermore, $\sup_{t\ge 0}\|\xi(\cdot,t)\|_{2,\infty}<\infty$ and the infimum of the LHS of (\ref{L1}) over all $\zeta(\cdot)=\xi(\cdot,t), \ t\ge 0$, is strictly positive.   \end{theorem}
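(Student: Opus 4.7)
The plan is to view (\ref{A1}) as a transport PDE whose only nonlinearity is the scalar coefficient $\rho(t)\DEF\rho(\xi(\cdot,t))$, and to construct $\rho$ by a Picard-type fixed-point argument on the map $\rho(\cdot)\mapsto\xi[\rho]\mapsto\rho(\xi[\rho](\cdot,\cdot))$. Rewriting (\ref{A1}) as $\pa_t\xi-(1+\rho(t)y)\pa_y\xi=h(y)-\rho(t)\xi$, the characteristics $y(t;y_0)$ satisfy $\dot y=-(1+\rho(t)y)$ and along them $\xi$ obeys the linear ODE $\dot\xi=h(y)-\rho(t)\xi$. Differentiating (\ref{A1}) once in $y$ shows $w=D\xi$ satisfies $\pa_t w-(1+\rho y)\pa_y w=h'(y)$, so $\frac{d}{dt}w(y(t),t)=h'(y(t))$; a further differentiation gives a linear equation for $D^2\xi$ with forcing built from $h''$ and a $\rho D^2\xi$ term. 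These characteristic formulas give explicit integral representations for $\xi,D\xi,D^2\xi$ in terms of the initial data, $h$, and the prescribed $\rho(\cdot)$.

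For local existence, let $\rho_0=\rho(\xi(\cdot,0))\in(0,\infty)$, finite by (\ref{L1}) and assumption~(c). On the closed ball $B=\{\rho\in C([0,T]):\|\rho-\rho_0\|_\infty\le\delta\}$, define $\Phi:B\ra C([0,T])$ by $\Phi(\rho)(t)=\rho(\xi[\rho](\cdot,t))$ using (\ref{H1}). The characteristic representations show that $\xi[\rho](\cdot,t)$ depends Lipschitz-continuously on $\rho$ in the $L^\infty$ topology; together with the $L^1$-Lipschitz estimate (\ref{J1}) for $dI$ and the Lipschitz continuity of $I$ itself (which follows by integrating $dI$), this makes the numerator and denominator of (\ref{H1}) Lipschitz in $\rho\in B$. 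Assumption~(c) and the strict positivity (\ref{L1}) at $t=0$ keep the denominator bounded away from $0$ on $B$ provided $T$ is small, and the standard contraction argument then yields a unique $C^2$ solution on a maximal interval $[0,T^*)$.

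The heart of the argument is to show that $\|\xi(\cdot,t)\|_{2,\infty}$ remains bounded and the LHS of (\ref{L1}) remains strictly positive on every compact subinterval of $[0,T^*)$, forcing $T^*=\infty$. Since $h$ is decreasing and the initial datum is decreasing, the equation $\frac{d}{dt}w(y(t),t)=h'(y(t))\le 0$ preserves $D\xi\le 0$; combined with $dI\le 0$ from~(d), this gives $[dI(\xi),D\xi]\ge 0$, so the numerator in (\ref{H1}) is bounded below by the strict positive infimum in~(d). Consequently $0\le\rho(\xi(\cdot,t))\le C/D(t)$, where $D(t)$ denotes the LHS of (\ref{L1}). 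The ODE $\dot\xi=h-\rho\xi$ along characteristics then bounds $\|\xi(\cdot,t)\|_\infty$ by comparison with $\xi_p$, while the characteristic formulas for $D\xi$ and $D^2\xi$, together with $\sup_{y}[y|h'(y)|+y^2|h''(y)|]<\infty$, upgrade this to a bound on $\|\xi(\cdot,t)\|_{2,\infty}$ as long as $\rho$ stays bounded.

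The delicate point, and the main obstacle, is the uniform lower bound on $D(t)=pI(\xi(\cdot,t))+[dI(\xi(\cdot,t)),\xi(\cdot,t)-yD\xi(\cdot,t)]$. Local existence gives $D(t)>0$ on a short interval, but propagating it globally requires a structural argument: by assumption~(a) the integrals defining $I(\xi)$ and $dI(\xi)$ live on $[\ve_0,\infty)$, so $\|dI(\xi)\|_{L^1}$ is controlled via (\ref{J1}) by the a priori $\|\xi\|_\infty$ bound, and $\|\xi-yD\xi\|_{L^\infty[\ve_0,\infty)}$ is controlled by the $C^2$ bound obtained above, while assumption~(c) provides a strictly positive lower bound on $pI(\xi)$. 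A bootstrap then shows that $D(t)$ cannot drift to $0$ without contradicting one of these uniform estimates; in particular $D(t)\ge d_0>0$ on $[0,T^*)$. This closes the loop: $\rho(\xi(\cdot,t))$ is uniformly bounded, the characteristic estimates close on themselves, and the standard continuation principle extends the local $C^2$ solution to a global one, delivering the uniform $\|\cdot\|_{2,\infty}$ bound and the strict positivity of $\inf_{t\ge 0}D(t)$ asserted in the theorem.
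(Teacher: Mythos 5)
Your setup — transport along characteristics, a contraction mapping on $\rho(\cdot)$ for local existence, preservation of monotonicity so that $D\xi\le 0$ persists, and the observation that property~(d) then forces the numerator of (\ref{H1}) to be bounded below so that $\rho(\xi(\cdot,t))\ge 0$ — all matches the paper and is correct. The gap is in the central a~priori estimates, where your argument is stated as a conclusion rather than carried out.

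First, the claim that the ODE $\dot\xi=h-\rho\xi$ ``bounds $\|\xi(\cdot,t)\|_\infty$ by comparison with $\xi_p$ \dots\ as long as $\rho$ stays bounded'' is not correct as written. Knowing only $0\le\rho(t)\le C$ gives from (\ref{D2}) the bound $\xi(y,t)\le\xi(\ve_0,0)+h(\ve_0)\,t$ for $y\ge\ve_0$, which grows linearly in $t$; an upper bound on $\rho$ does not close a uniform-in-time $L^\infty$ bound. What is actually needed — and what the paper's Lemma~2.2 supplies — is the opposite direction: a mechanism by which $\rho$ becomes \emph{large} (specifically $>1/p$) whenever $\|\xi(\cdot,t)\|_\infty$ threatens to become large, so that $I(\xi(\cdot,t))$ is then increasing and cannot drop below $\min\{c_M,I(\xi(\cdot,0))\}$. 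The paper obtains this via the Harnack-type ratio bound (\ref{O2}), $\sup_{y\ge\ve_0}\xi(y,t)\le C\inf_{y\ge\ve_0}\xi(y,t)$, which follows from (\ref{M2})--(\ref{N2}) and a short-time control of $\rho$ from the local theory; combined with the monotonicity of both $\xi$ and $h$ this shows $\inf_{y\ge\ve_0}\xi(y,t)>ph(\ve_0)$ implies $\rho>1/p$. This Harnack step and the induced lower bound on $I(\xi(\cdot,t))$ are entirely absent from your argument, and they are what turn (\ref{E1}) into the uniform bound (\ref{P2}) on $\int_s^t(\rho-1/p)\,ds'$, hence the uniform $\|\xi(\cdot,t)\|_{2,\infty}$ bound.

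Second, the ``delicate point'' about the denominator $D(t)=pI(\xi(\cdot,t))+[dI(\xi(\cdot,t)),\xi-yD\xi]$ is not resolved by the bootstrap you sketch. Listing that $\|dI\|_{L^1}$, $pI$, and $\|\xi-yD\xi\|_{L^\infty}$ are each controlled does not prevent the (nonpositive) pairing term from cancelling $pI$; $D(t)>0$ is a delicate cancellation and no amount of size control on the pieces rules out $D(t)\to 0$. The paper's actual argument runs the other way: once $\|\xi\|_{2,\infty}$ is uniformly bounded, $D(\cdot)$ is Lipschitz; if $D(t)\to 0$ as $t\to T$ then $D(t)\le C_1(T-t)$, and since the numerator of $\rho$ is bounded below by property~(d) this forces $\liminf_{t\to T}(T-t)\,\rho(\xi(\cdot,t))>0$, contradicting the integrated bound (\ref{P2}). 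Your proposal needs this chain of implications (uniform $I$ bound $\Rightarrow$ integrated $\rho$ bound $\Rightarrow$ uniform $\|\xi\|_{2,\infty}$ $\Rightarrow$ Lipschitz $D$ $\Rightarrow$ contradiction if $D\to 0$) spelled out; as written it asserts the conclusion without supplying the mechanism.
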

We require some further properties of the function $h(\cdot)$ in order to prove asymptotic stability. These are given by
\be \label{M1}
yh''(y)+h'(y) \ \ge  \ 0 \ {\rm and} \quad y\ra y^2h''(y)  \ {\rm decreasing \ for \ } y>0 \ . 
\ee
\begin{theorem}
Assume that $h(\cdot),I(\cdot)$ and $\xi(\cdot,0)$ satisfy the conditions of Theorem 1.1, and in addition that (\ref{M1}) holds. Then for any $q>p$ there exists a constant $C_q$ such that
\be \label{N1}
\|\xi(\cdot,t)-\xi_p(\cdot)\|_{2,\infty} \ \le \ C_qe^{-t/q} \quad  {\rm for \ } t\ge 0 \ .
\ee
\end{theorem}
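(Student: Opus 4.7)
The approach is to reduce (\ref{A1}) to the scalar DDE introduced in $\S4$ and import its exponential decay.

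\emph{Step 1: reduction to a scalar equation.} I would rewrite (\ref{A1}) as
\[
\pa_t\xi - \bigl(1+\rho(t)y\bigr)\pa_y\xi + \rho(t)\xi \ = \ h(y),
\]
where $\rho(t):=\rho(\xi(\cdot,t))$ is bounded above and below by Theorem~1.1 together with (\ref{L1}). Setting $\sig(t)=\int_0^t\rho(s)\,ds$, the characteristic ODE $\dot y=-(1+\rho(t)y)$ and the along-characteristic equation $\dot\xi=h(y(t))-\rho(t)\xi$ are both integrable in closed form, giving $\xi(y,t)$ as an explicit functional of the initial data and of $\sig$. Substituting this representation into the formula (\ref{H1}) for $\rho$ produces a closed scalar equation for $\rho(t)$; by $\S4$ this is the DDE whose stability is the subject of $\S8$.

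\emph{Step 2: exponential decay of the DDE.} Theorem~1.1 confines the solution to a compact set on which (\ref{L1}) holds with a uniform positive margin, so $\rho(t)$ stays in an interval bounded away from $0$ and $\infty$. I would then invoke the Yorke-style argument of $\S8$: the oscillations of $\rho(t)-1/p$ between successive zeros contract by a definite factor, obtained from the maximal/minimal properties of the DDE nonlinearity proved in $\S7$ via Pontryagin/dynamic programming arguments for an associated optimal control problem. Hypothesis (\ref{M1}) enters here as the sign/convexity condition that makes the control problem tractable. Combining this contraction with the local Volterra linearization of $\S3$--$\S4$, whose spectral gap is exactly $1/p$, yields $|\rho(t)-1/p|\le \tilde C_qe^{-t/q}$ for every $q>p$, with a correspondingly small loss in the rate coming from the nonlinear correction.

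\emph{Step 3: transferring decay back to (\ref{N1}).} Writing $\eta(y,t)=\xi(y,t)-\xi_p(y)$ and subtracting (\ref{C1}) from (\ref{A1}) gives
\[
\pa_t\eta-\bigl(1+\rho(t)y\bigr)\pa_y\eta+\rho(t)\eta \ = \ -\bigl(\rho(t)-\tfrac1p\bigr)\bigl[\xi_p(y)-y\xi_p'(y)\bigr].
\]
Duhamel's formula along the characteristics of Step 1, combined with the uniform bound $\sup_t\|\xi(\cdot,t)\|_{2,\infty}<\infty$ from Theorem~1.1 and the decay obtained in Step 2, yields $\sup_{y>0}|\eta(y,t)|\le C_qe^{-t/q}$. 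Differentiating the characteristic formula once and twice in the initial spatial variable and using the assumed bounds on $h',h''$ propagates the same rate to the weighted derivatives $y\pa_y\eta$ and $y^2\pa_y^2\eta$, yielding the full $\|\cdot\|_{2,\infty}$ estimate (\ref{N1}).

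The main obstacle is Step 2: the optimal control argument of $\S7$ establishing the maximal property of the DDE nonlinearity under (\ref{M1}), and the conversion of that maximal property into an explicit contraction rate arbitrarily close to the linearized rate $1/p$. Steps 1 and 3 are essentially characteristic-method bookkeeping once Theorem~1.1 and the DDE stability are in hand.
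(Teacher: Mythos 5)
Your proposal identifies all the right ingredients — the reduction to the scalar DDE, the global stability coming from the optimal-control maximal properties of \S7 under (\ref{M1}), the Yorke-type oscillation argument of \S8, and the local Volterra-based exponential decay of \S3--\S4 — and you correctly flag the global DDE stage as the core difficulty. But the logical architecture of your Step~2 is imprecise in a way that matters. You describe the \S8 argument as producing a ``contraction by a definite factor'' and then combine that contraction with the local spectral gap to extract $|\rho(t)-1/p|\le C_qe^{-t/q}$. That is not what \S8 delivers, and as stated this combination would not close. Proposition~8.1 (the output of Theorem~7.1 and the Yorke-style oscillation argument) establishes only that $I(t)\to I_\infty$ as $t\to\infty$, with \emph{no rate}: the mechanism is qualitative, ruling out persistent oscillation and showing eventual convergence, not producing a quantitative per-oscillation contraction. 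Every bit of exponential decay in (\ref{N1}) is supplied by the \emph{local} nonlinear stability Theorem~3.1, which requires only that the solution eventually enter a small $\|\cdot\|_{1,\infty}$-ball around $\xi_p$. The paper's proof therefore has a clean two-stage structure that you should make explicit: (i) Proposition~8.1 together with the characteristic representation (\ref{D2}), (\ref{A4}) and the a priori bounds of Theorem~1.1 yield, for every $\varepsilon>0$, a time $T_\varepsilon$ with $\|\xi(\cdot,T_\varepsilon)-\xi_p\|_{1,\infty}\le\varepsilon$; (ii) Theorem~3.1, applied with $T_\varepsilon$ as the initial time, then gives $e^{-t/q}$ decay in $\|\cdot\|_{2,\infty}$ for any $q>p$. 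The ``loss from $1/p$ to $1/q$'' you mention is real, but it lives entirely inside Theorem~3.1's local nonlinear perturbation theory and is not obtained by trading off against a global contraction factor. Once you cite Theorem~3.1, your Step~3 Duhamel computation is essentially a re-derivation of the \S4 alternative proof of that theorem and is redundant.
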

In the proof of Theorem 1.2  we first show that for any $\ve>0$ there is a time $T_\ve>0$ such that $\|\xi(\cdot,T_\ve)-\xi_p(\cdot)\|_{1,\infty} \ \le \ve$. This result is a consequence of our stability theorem for the corresponding DDE. The exponential decay in (\ref{N1}) then follows from the local asymptotic stability theorem proved in $\S3$.  

\vspace{.1in}

\section{Existence and Uniqueness Theorems}
In this section we prove Theorem 1.1. We begin by  first studying the linear PDE
 \be \label{A2}
 \frac{\pa \xi(y,t)}{\pa t} -h(y) -\frac{\pa\xi(y,t)}{\pa y} +\rho(t)\left[\xi(y,t)-y\frac{\pa \xi(y,t)}{\pa y}\right] \ = \ 0, \quad y> 0, \ t>0,
  \ee
where $\rho:[0,\infty)\ra\R$ is assumed to be a known continuous function.  The evolution PDE (\ref{A2}) is uniquely solvable by the method of characteristics for given  initial data $\xi(y,0), \ y> 0$. The characteristic $y(\cdot)$ defined as the solution to the terminal value problem,
\be \label{B2}
\frac{dy(s)}{ds} \ = \ -1-\rho(s)y(s) \ , \quad 0\le s<t, \ y(t)=y \ ,
\ee
has the property that for $y> 0$ then $y(s)> 0, \ 0\le s< t$. The solution to (\ref{B2})  is evidently given by the formula
\be \label{C2}
y(s) \ = \ \exp\left[\int_s^t\rho(s') \ ds'\right]y+\int_s^t ds' \  \exp\left[\int^{s'}_s\rho(s'') \ ds''\right] \ .
\ee
The solution to (\ref{A2}) with the given initial data is then expressed in terms of the characteristic (\ref{C2})  by
\be \label{D2}
\xi(y,t) \ = \ \exp\left[-\int_0^t \rho(s) \ ds\right]\xi(y(0),0)+\int_0^t ds \  h(y(s))\exp\left[-\int_s^t \rho(s') \ ds'\right]  \ . 
\ee
It follows from (\ref{B1}), (\ref{D2}) that if initial data $\xi(\cdot,0)$ is non-negative then the function $\xi(\cdot,t)$ is non-negative for all $t>0$.

We prove a local existence and uniqueness theorem for the initial value problem for (\ref{A1}). 
\begin{lem}
Assume $I(\cdot)$ satisfies the conditions $(a),(b)$ of $\S1$, and the function $h(\cdot)$ is $C^2$  satisfying $\sup_{y>0}[y|h'(y)|+y^2|h''(y)|]<\infty$ and (\ref{B1}).  Let $\xi(\cdot,0)$ be the initial data for (\ref{A1}) and assume it is $C^2$ non-negative with $\|\xi(\cdot,0)\|_{2,\infty}<\infty$ and such that (\ref{L1}) holds with $\zeta(\cdot)=\xi(\cdot,0)$.   Then there exists $T>0$ and a unique solution $\xi(\cdot,t), \ 0\le t\le T$, to the initial value problem for (\ref{A1}) such that $\|\xi(\cdot,t)\|_{2,\infty}<\infty, \ 0\le t\le T$, and (\ref{L1}) holds for $\zeta(\cdot)=\xi(\cdot,t), \ 0\le t\le T$. 
\end{lem}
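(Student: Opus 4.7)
The plan is to reduce the nonlinear problem to a fixed-point equation for the scalar function $t\mapsto\rho(t)$ that encodes the nonlinearity. Given a continuous $\rho:[0,T]\ra\R$ with $\|\rho\|_\infty\le M$, the linear PDE (\ref{A2}) is explicitly solvable by the characteristic formulae (\ref{C2})--(\ref{D2}); denote the resulting solution by $\xi_\rho(\cdot,t)$. Consider the map $\Phi$ sending $\rho(\cdot)$ to the function $t\mapsto \rho(\xi_\rho(\cdot,t))$, where the outer $\rho$ is the functional in (\ref{H1}). Its fixed points correspond precisely to solutions of (\ref{A1}), and I would apply the Banach contraction principle to $\Phi$ on a suitable ball in $C([0,T])$ for $T$ small.

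The first step is weighted $C^2$ estimates on $\xi_\rho$. By (\ref{C2}) the characteristic is affine in $y$, $y(s)=A(s,t)y+B(s,t)$, with $A(s,t)=\exp(\int_s^t\rho)$ pinned between $e^{-MT}$ and $e^{MT}$. Differentiating (\ref{D2}) once and twice in $y$ yields explicit expressions involving $h'(y(s))$ and $h''(y(s))$. The key trick is that $y\le y(s)/A(s,t)$ from (\ref{C2}), so the weights $y^k$ in the norm $\|\cdot\|_{2,\infty}$ can be absorbed into $y(s)^k$ at the cost of a factor $e^{kMT}$, after which the hypothesis $\sup_y[y|h'(y)|+y^2|h''(y)|]<\infty$ and $\|\xi(\cdot,0)\|_{2,\infty}<\infty$ produce a uniform bound $\sup_{0\le t\le T}\|\xi_\rho(\cdot,t)\|_{2,\infty}\le K(M,T)$. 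Since (\ref{L1}) holds strictly at $t=0$, condition (b) together with continuity of $t\mapsto\xi_\rho(\cdot,t)$ in $\|\cdot\|_{1,\infty}$ keeps the denominator of (\ref{H1}) bounded below by some $\delta>0$ on $[0,T]$ for $T$ small, so $\Phi$ stabilizes the ball $\{\|\rho\|_\infty\le M\}$ for a suitable $M$ determined by the initial data.

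For the contraction, taking $\rho_1,\rho_2$ in this ball, Gronwall applied to the difference of the ODEs (\ref{B2}) gives $|y_1(s)-y_2(s)|\le CT(1+y)\|\rho_1-\rho_2\|_\infty$, and subtracting the representations (\ref{D2}) produces
\begin{equation*}
\|\xi_{\rho_1}(\cdot,t)-\xi_{\rho_2}(\cdot,t)\|_{1,\infty}\ \le\ C(M,T)\,T\,\|\rho_1-\rho_2\|_\infty .
\end{equation*}
Combining this with the $L^\infty$-Lipschitz estimate (\ref{J1}) on $dI$ and the uniform lower bound $\delta$ on the denominator of (\ref{H1}) yields $\|\Phi(\rho_1)-\Phi(\rho_2)\|_\infty\le C'(M)T\|\rho_1-\rho_2\|_\infty$, and one shrinks $T$ so that $C'(M)T<1$. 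The resulting unique fixed point $\rho^*\in C([0,T])$ gives $\xi(\cdot,t):=\xi_{\rho^*}(\cdot,t)$ as the unique local solution, and (\ref{L1}) remains valid on $[0,T]$ by construction. The main obstacle is the bookkeeping in the weighted $C^2$ estimate: one must verify that the $y^2$-weight in $\|\cdot\|_{2,\infty}$ does not destroy the uniform control on the $h''$-term in $\partial_y^2\xi_\rho$, which depends crucially on the affine structure of the characteristic and on the pointwise hypothesis $\sup_y y^2|h''(y)|<\infty$; the integrable singularity of $h$ near $y=0$ in (\ref{B1}) is handled separately in the $s$-integral. Once these estimates and the persistence of (\ref{L1}) are established, the contraction argument is routine.
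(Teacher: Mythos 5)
Your proposal follows essentially the same route as the paper: a Banach fixed-point argument for the scalar map $\rho(\cdot)\mapsto \rho(\xi_\rho(\cdot,\cdot))$ on a small ball in $C([0,T])$, using the characteristic formula (\ref{D2}), the weighted $\|\cdot\|_{2,\infty}$ bounds, and the $L^1$--$L^\infty$ Lipschitz property (\ref{J1}) of $dI$. The one point to correct is your appeal to ``continuity of $t\mapsto\xi_\rho(\cdot,t)$ in $\|\cdot\|_{1,\infty}$'': as the paper notes in Remark 1, this continuity can fail, because $D\xi(y,0)$ may be unbounded as $y\to 0$ and the characteristics shift the argument; what is actually available, and all that is needed since property (a) makes $dI(\zeta)$ supported in $[\ve_0,\infty)$, is the weaker estimate $\sup_{y\ge\ve_0}\{|\xi(y,t)-\xi(y,0)|+|yD\xi(y,t)-yD\xi(y,0)|\}\le C t$, which keeps the numerator and denominator of (\ref{H1}) close to their $t=0$ values.
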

\begin{proof}
Let $\rho_0=\rho(\xi(\cdot,0))$ and for $\ve,T>0$ let $\mathcal{E}_{\ve,T}$ be the metric space of continuous functions $\rho:[0,T]\ra\R$ such that  $\rho(0)=\rho_0$ and $\|\rho(\cdot)-\rho_0\|_\infty<\ve$. If $\rho(\cdot)\in\mathcal{E}_{\ve,T}$ we define the function $K\rho:[0,T]\ra\R$ by
\be \label{E2}
K\rho(t) \ = \ \rho(\xi(\cdot,t)) \ , \ \ 0\le t\le T, \quad  {\rm where \ } \xi(\cdot,t) \ {\rm is \  given \  by \  (\ref{D2}).}
\ee
Evidently fixed points of $K$ correspond to solutions $\xi(\cdot,\cdot)$ of (\ref{A1}).   

We first show that $K$ maps $\mathcal{E}_{\ve,T}$ to itself provided $\ve,T>0$ are sufficiently small. To do this we use the representation
\be \label{F2}
I(\xi(t))-I(\xi(0)) \ = \ \int_0^1 [dI(\la\xi(t)+(1-\la)\xi(0)),\xi(t)-\xi(0)] \ d\la \ .
\ee
It follows from property (a) of $I(\cdot)$ and (\ref{J1}) that for $0\le \la\le 1$, 
\be \label{G2}
\|dI(\la\xi(t)+(1-\la)\xi(0))-dI(\xi(0))\|_{L^1(\R^+)} \ \le \ C\sup_{y\ge \ve_0}|\xi(y,t)-\xi(y,0)| \  .
\ee
From (\ref{D2}) and using the inequality $\sup_{y>0}y|h'(y)|<\infty$,  we see that
\be \label{H2}
\sup_{y\ge \ve_0}|\xi(y,t)-\xi(y,0)| \ \le \  C_1t\left[\ \|\xi(\cdot,0)\|_{1,\infty}+1\right] \ , \quad 0\le t\le T,
\ee
for a constant $C_1$ depending only on $\rho_0,\ve,$ provided $T\le 1$. It follows now from (\ref{F2})-(\ref{H2}) that for any $\del>0$ we may choose $T>0$ sufficiently small so that  $|I(\xi(t))-I(\xi(0))|<\del$ if $\rho(\cdot)\in\mathcal{E}_{\ve,T}$.  Next we write
\begin{multline} \label{I2}
[dI(\xi(t)),h+D\xi(t)]-[dI(\xi(0)),h+D\xi(0)] \\
= \   [dI(\xi(t)),D\xi(t)-D\xi(0)]+[dI(\xi(t))-dI(\xi(0)),h+D\xi(0)] \  \  .
\end{multline}
Using the fact that $\sup_{y>0}|y^2h''(y)|<\infty$, we find similarly to (\ref{H2}) that 
\be \label{J2}
\sup_{y\ge \ve_0}|yD\xi(y,t)-yD\xi(y,0)| \ \le \  C_2t\left[ \  \|\xi(\cdot,0)\|_{2,\infty}+1\right] \ , \quad 0\le t\le T,
\ee
for a constant $C_2$ depending only on $\rho_0,\ve,$ provided $T\le 1$.  We conclude that for any $\del>0$ we may choose $T>0$ sufficiently small so that for $0<t\le T$ the numerator of (\ref{H1}) evaluated at $\zeta=\xi(t)$ differs from the numerator evaluated at $\zeta=\xi(0)$ by at most $\del$. Since the same holds for the denominator, we conclude that for $\rho(\cdot)\in \mathcal{E}_{\ve,T}$ the function $K\rho(\cdot)$ has $\|K\rho(\cdot)-\rho_0\|_\infty<\ve$ if $T>0$ is sufficiently small. We have therefore shown that $K$ maps $\mathcal{E}_{\ve,T}$  to itself if $T>0$ is sufficiently small. 

We can similarly show that for $T>0$ small the mapping $K$ is a contraction on $\mathcal{E}_{\ve,T}$. Let $\xi_j(\cdot,t), \ j=1,2, \ 0\le t\le T,$ denote the functions (\ref{D2}) with $\rho(\cdot)=\rho_j(\cdot) \ j=1,2$. We have that
\be \label{K2}
\sup_{y\ge \ve_0}|\xi_1(y,t)-\xi_2(y,t)| \ \le \  C_3t\|\rho_1(\cdot)-\rho_2(\cdot)\|_\infty\left[ \  \|\xi(\cdot,0)\|_{1,\infty}+1\right] \ , \quad 0\le t\le T,
\ee
for a constant $C_3$ depending only on $\rho_0,\ve,$ provided $T\le 1$. Corresponding to (\ref{J2}) we also have that
 \be \label{L2}
\sup_{y\ge \ve_0}|yD\xi_1(y,t)-yD\xi_2(y,t)| \ \le \  C_4t\|\rho_1(\cdot)-\rho_2(\cdot)\|_\infty\left[ \  \|\xi(\cdot,0)\|_{2,\infty}+1\right] \ , \quad 0\le t\le T,
\ee
for a constant $C_4$ depending only on $\rho_0,\ve,$ provided $T\le 1$. The inequalities (\ref{K2}), (\ref{L2}) are sufficient to show that $K$ is a contraction provided $T>0$ is sufficiently small. Now substituting the fixed point $\rho(\cdot)$ for $K$ into (\ref{D2}) it is easy to see that $\|\xi(\cdot,t)\|_{2,\infty}<\infty$ for $0\le t\le T$. 
\end{proof}

\begin{rem}
Observe that in (\ref{D2}) the function $h(\cdot)$ appears in an integral, whence the second derivative of $\xi(\cdot,t)$ can be bounded in terms of the first derivative of $h(\cdot)$.  Therefore a condition weaker than $\sup_{y>0}y^2|h''(y)|<\infty$ is sufficient to establish a local existence and uniqueness theorem. It does not seem possible to make a contraction mapping argument using the function $\xi(\cdot,t)$ and the norm $\|\cdot\|_{1,\infty}$. The reason for this is that $\xi(\cdot,0)$ can have oscillations of order $1$ close to the origin since  $D\xi(y,0)$ can be unbounded for $y$ close to $0$. 
\end{rem}

The main observation in the proof of global existence of solutions to the IVP for (\ref{A1}) is the following:
\begin{lem}
Assume $I(\cdot)$ satisfies the conditions $(a),(b),(c),(d)$ of $\S1$ and that $h(\cdot)$, in addition to satisfying the conditions of Lemma 2.1,  is also a decreasing function. Similarly assume $\xi(\cdot,0)$ satisfies the conditions of Lemma 2.1 and is  decreasing. Suppose a solution $\xi(\cdot,t), \ 0\le t<T$, to the IVP for (\ref{A1}) exists in the interval $[0,T)$, has $\|\xi(\cdot,t)\|_{1,\infty}<\infty$ and (\ref{L1}) holds with $\zeta(\cdot)=\xi(\cdot,t)$ for all $t\in[0,T)$.  Then  $I(\xi(\cdot,t))\ge c_0, \ 0\le t<T$, for some positive constant $c_0$ depending only on $\|\xi(\cdot,0)\|_{2,\infty}$ and the value of the LHS of (\ref{L1}) when $\zeta(\cdot)=\xi(\cdot,0)$.
 \end{lem}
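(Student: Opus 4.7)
The plan is to turn property (\ref{E1}) into a differential inequality for $I(\xi(\cdot,t))$ with enough structure to give a uniform lower bound by a Gronwall-type argument.

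First I would check that monotonicity in $y$ is propagated by the flow. Since $h$ and $\xi(\cdot,0)$ are both decreasing, the characteristic representation (\ref{D2}) expresses $\xi(y,t)$ as a positive linear combination of $\xi(y(0),0)$ and values $h(y(s))$, with $y \mapsto y(s)$ increasing (since $\partial y(s)/\partial y > 0$ by (\ref{C2})). Differentiating (\ref{D2}) in $y$ therefore produces only non-positive contributions, and hence $D\xi(\cdot,t) \le 0$ on $[0,T)$.

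Next I would use (\ref{H1}) to bound $\rho$ from below. Writing $\rho(\xi(\cdot,t)) = N(t)/D(t)$ with
\[ N = I + [dI, h] + [dI, D\xi], \qquad D = pI + [dI, \xi] - [dI, yD\xi], \]
property (d) supplies $dI \le 0$, and combined with $D\xi \le 0$ this gives $[dI, D\xi] \ge 0$, so
\[ N \ge I + [dI, h] \ge c_\ast := \inf_\zeta\{I(\zeta) + [dI(\zeta), h]\} > 0. \]
On the other hand $\xi \ge 0$ and $-yD\xi \ge 0$ give $[dI, \xi] \le 0$ and $-[dI, yD\xi] \le 0$, so $D \le pI$; since (\ref{L1}) ensures $D > 0$, dividing yields $\rho(\xi(\cdot,t)) \ge c_\ast/(pI(\xi(\cdot,t)))$.

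Plugging this into (\ref{E1}), rewritten as $dI/dt = (p\rho - 1)I$, gives the differential inequality $dI/dt \ge c_\ast - I$. Setting $J(t) = I(\xi(\cdot,t)) - c_\ast$ and multiplying by $e^t$ turns this into $(Je^t)' \ge 0$, which integrates to
\[ I(\xi(\cdot,t)) \ge c_\ast + (I(\xi(\cdot,0)) - c_\ast)e^{-t} \ge \min\{I(\xi(\cdot,0)),\, c_\ast\} \]
for all $t \in [0,T)$. A lower bound on $I(\xi(\cdot,0))$ comes either from property (c) applied with $M = \|\xi(\cdot,0)\|_\infty \le \|\xi(\cdot,0)\|_{2,\infty}$, or directly from (\ref{L1}) at $t=0$ together with $[dI(\xi(\cdot,0)), \xi(\cdot,0) - yD\xi(\cdot,0)] \le 0$, which yields $pI(\xi(\cdot,0)) \ge$ LHS of (\ref{L1}) at $t=0$. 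Either route produces $c_0 > 0$ depending only on the advertised data.

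The main obstacle I anticipate is the sign bookkeeping in the middle step: both bounds $N \ge c_\ast$ and $D \le pI$ rely simultaneously on $dI \le 0$, $\xi \ge 0$, and $D\xi \le 0$, so monotonicity propagation is essential. Without it the term $[dI, D\xi]$ in $N$ has uncontrolled sign and the lower bound on $\rho$ collapses, which is why the decreasing assumptions on both $h$ and $\xi(\cdot,0)$ enter crucially. Once the pointwise bound $\rho \ge c_\ast/(pI)$ is secured, the remainder is a routine ODE comparison.
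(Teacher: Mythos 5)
Your proposal is correct, and it takes a genuinely different (and arguably cleaner) route from the paper. The paper first establishes a comparison estimate — essentially equation (\ref{O2}): $\sup_{y\ge\ve_0}\xi(y,t)\le C\inf_{y\ge\ve_0}\xi(y,t)$ — and then argues that whenever $\|\xi(\cdot,t)\|_\infty$ exceeds a threshold $M>Cph(\ve_0)$, one has $\inf_{y\ge\ve_0}\xi(y,t)>ph(\ve_0)$, which forces $\rho>1/p$ (using the monotonicity of $h$ and $\xi$ and property (d)), so $I$ is increasing at that instant. Combining with property (c) gives $I(\xi(\cdot,t))\ge\min\{c_M,I(\xi(\cdot,0))\}$. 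Your route instead extracts a direct quantitative lower bound $\rho\ge c_\ast/(pI)$ from the signed decomposition of (\ref{H1}) — both numerator and denominator estimates resting on the same trio $dI\le0$, $\xi\ge0$, $D\xi\le0$, with $c_\ast=\inf\{I+[dI,h]\}>0$ supplied by property (d) — and then closes with a one-line ODE comparison $dI/dt\ge c_\ast-I$. This bypasses the comparison estimate (\ref{O2}) entirely and uses property (c) only for the trivial bound on $I(\xi(\cdot,0))$, which you correctly note can also be obtained from (\ref{L1}) at $t=0$. Both arguments hinge on the propagation of monotonicity along characteristics, which you correctly identify as the essential ingredient. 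Your ODE comparison also gives a cleaner explicit constant $\min\{I(\xi(\cdot,0)),c_\ast\}$ and makes the exponential relaxation toward $c_\ast$ visible, which the paper's argument does not.
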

\begin{proof}
From Lemma 2.1 we may assume $T>0$, and by differentiating (\ref{D2}) we see that $\xi(\cdot,t)$ is a positive decreasing function for all $0<t<T$.   Hence property (d) of $I(\cdot)$ implies that $\rho(\xi(\cdot,t))\ge 0, \ 0\le t<T$. It then follows from (\ref{C2}), (\ref{D2}) that
\be \label{M2}
\xi(y,t) \ \le \ \exp\left[-\int_0^t \rho(s') \ ds'\right]\xi(\ve_0,0)+h(\ve_0)\int_0^t ds \ \exp\left[-\int_s^t \rho(s') \ ds'\right] \ ,
\ee
for $y\ge \ve_0$ and also that
\be \label{N2}
\xi(y,t) \ \ge \ h_\infty \int_0^t ds \ \exp\left[-\int_s^t \rho(s') \ ds'\right] \ , \quad y\ge \ve_0 \ .
\ee
From  Lemma 2.1 we have that for any $\ve>0$ there exists $\del>0$, depending only on $\|\xi(\cdot,0)\|_{2,\infty}$ and the value of the LHS of (\ref{L1}) when $\zeta(\cdot)=\xi(\cdot,0)$,  such that $|\rho(\xi(\cdot,t))-\rho(\xi(\cdot,0))|<\ve$ if $0\le t<\del$.  We conclude from (\ref{M2}), (\ref{N2}) that there exists a constant $C>0$, depending only on $\|\xi(\cdot,0)\|_{2,\infty}$ and the value of the LHS of (\ref{L1}) when $\zeta(\cdot)=\xi(\cdot,0)$, such that
\be \label{O2}
\sup_{y\ge \ve_0}\xi(y,t) \ = \ \xi(\ve_0,t) \ \le  \ C\xi(\infty,t) \ = \ C\inf_{y\ge\ve_0}\xi(y,t) \ , \quad 0\le t<T \ .
\ee
We choose any $M>Cph(\ve_0)$ and observe from (\ref{O2}) that if $\|\xi(\cdot,t)\|_\infty\ge M$ then $\inf_{y\ge\ve_0}\xi(y,t)> ph(\ve_0)$. Using the fact that both $\xi(\cdot,t)$ and $h(\cdot)$ are decreasing functions we see from property $(d)$ of $I(\cdot)$ that $\rho(\xi(\cdot,t))>1/p$ if $\inf_{y\ge\ve_0}\xi(y,t)> ph(\ve_0)$. Hence (\ref{E1}) implies that if $\|\xi(\cdot,\tau)\|_\infty\ge M$  then the function $t\ra I(\xi(\cdot,t))$ is increasing at $t=\tau$.  We conclude that $I(\xi(\cdot,t))\ge \min\{c_M, \ I(\xi(\cdot,0))\}$ for all $0<t<T$, where $c_M$ is the constant in property $(c)$ of $I(\cdot)$. 
\end{proof}
\begin{proof}[Proof of Theorem 1.1]
From property $(d)$ of $I(\cdot)$ it follows that $I(\zeta(\cdot))\le I(0(\cdot))$ for all non-negative continuous functions $\zeta(\cdot)$.  Suppose now a solution $\xi(\cdot,t)$ exists in the interval $0<t<T$ satisfying the conditions of Lemma 2.2. Then we have from (\ref{E1}) that
\be \label{P2}
\left|\int_s^t\left\{\rho(\xi(\cdot,s'))-\frac{1}{p}\right\} \ ds'\right| \ \le \  \frac{1}{p}\log\left(\frac{I(0(\cdot))}{c_0}\right) \ , \quad 0<s<t<T \ ,
\ee
where $c_0$ is the lower bound for $I(\cdot)$ of Lemma 2.2. It follows from (\ref{D2}), (\ref{P2}) that there is a constant $C$, independent of $T$, such that $\|\xi(\cdot,t)\|_{2,\infty}\le C$ for $0\le t<T$.  Suppose now that $T<\infty$ and there is an increasing sequence of times $T_n, \ n=1,2,..,$ such that $\lim_{n\ra\infty}T_n=T$ and $\liminf_{n\ra\infty} F(T_n)>0$, where  $F(t)=pI(\xi(\cdot,t))+[dI(\xi(\cdot, t)), \xi(\cdot,t)-yD\xi(\cdot,t)]$.  Since $\|\xi(\cdot,T_n)\|_{2,\infty}\le C, \ n=1,2,..,$  Lemma 2.1 implies that we may extend the solution to (\ref{A1}) beyond time $T$. Alternatively we have $\lim_{t\ra T}F(t)=0$,  and since $\|\xi(\cdot,t)\|_{2,\infty}\le C, \ 0\le t<T,$ there is a constant $C_1$ independent of $T$ such that $|F(t_2)-F(t_1)|\le C_1(t_2-t_1), \ $ for $0<t_1<t_2<T, \ t_2-t_1\le 1$.  It follows then from property $(d)$ of $I(\cdot)$ that $\liminf_{t\ra T}(T-t)\rho(\xi(\cdot,t))>0$, but this contradicts (\ref{P2}). We conclude that  a global solution of (\ref{A1}) with the property $\sup_{t>0}\|\xi(\cdot,t)\|_{2,\infty}<\infty$ exists. The strictly positive lower bound on the infimum of the LHS of (\ref{L1}) over all $\xi(\cdot,t), \ t\ge 0,$ follows by a similar argument.  
\end{proof}

\vspace{.2in}

\section{Local Asymptotic Stability}
We first linearize (\ref{A1}) with $\rho(\cdot)$ given by (\ref{H1}) about the equilibrium $\xi_p(\cdot)$ and study its stability.   To do this we denote by $A,B$ the operators
\be \label{A3}
A\zeta(y) \ = \ \zeta(y)-y\frac{d\zeta(y)}{dy} \ , \quad B\zeta(y) \ = \ \frac{1}{p}\zeta(y)-\left[1+\frac{y}{p}\right]\frac{d\zeta(y)}{dy} \ . 
\ee
Observe now that the functional $\rho(\cdot)$ of (\ref{H1}) satisfies the identity
\be \label{B3}
\rho(\zeta(\cdot))-\frac{1}{p} \ = \ -\frac{\left[dI(\zeta(\cdot)), \ B\{\zeta(\cdot)-\xi_p(\cdot)\}\right]}{pI(\zeta(\cdot))+\left[dI(\zeta(\cdot)),  \ A\zeta(\cdot)\right]} \ .
\ee
Hence we may rewrite (\ref{A1}) as
\be \label{C3}
\frac{\pa \xi(\cdot,t)}{\pa t}+B\{\xi(\cdot,t)-\xi_p(\cdot)\}-\frac{\left[dI(\xi(\cdot,t)), \ B\{\xi(\cdot,t)-\xi_p(\cdot)\}\right]}{pI(\xi(\cdot,t))+\left[dI(\xi(\cdot,t),  \ A\xi(\cdot,t)\right]}A\xi(\cdot,t) \ = \ 0 \ .
\ee
Setting $\tilde{\xi}(\cdot,t)=\xi(\cdot,t)-\xi_p(\cdot)$, it follows from (\ref{C3}) that the linearization of (\ref{A1}) about $\xi_p(\cdot)$ is given by 
\be \label{D3}
\frac{d\tilde{\xi}(t)}{dt} +B\tilde{\xi}(t) -[dI(\xi_p),B\tilde{\xi}(t)]\frac{A\xi_p}{pI(\xi_p)+[dI(\xi_p), \ A\xi_p]} \ = \ 0 \ .
\ee
The solution to (\ref{D3}) satisfies
\be \label{E3}
\tilde{\xi}(t) \ = \ e^{-Bt}\tilde{\xi}(0)+\int_0^t ds \ [dI(\xi_p),B\tilde{\xi}(s)]\frac{e^{-B(t-s)}A\xi_p}{pI(\xi_p)+[dI(\xi_p), \ A\xi_p]}  \ .
\ee
Hence if we set $u(t)=[dI(\xi_p),B\tilde{\xi}(t)]$ then (\ref{E3}) yields an integral equation for $u$,
\be \label{F3}
u(t)+\int_0^t  K(t-s)u(s) \ ds \ = \ g(t)   \ , \quad t>0,
\ee
where the functions $K,g$ are given by
\be \label{G3}
K(t)  = -\frac{[dI(\xi_p),e^{-Bt}BA\xi_p]}{pI(\xi_p)+[dI(\xi_p), \ A\xi_p]}  \ , \quad g(t)=[dI(\xi_p),e^{-Bt}B\tilde{\xi}(0)] \quad t\ge 0.
\ee

Equation (\ref{F3}) is a Volterra integral equation and it may be studied using Laplace transform methods.  
Extending the functions $u,K,g$ on $\R^+$ to $\R$ by setting them to be zero on $\R^-$, then (\ref{F3}) is simply the convolution equation $u+K*u=g$.  If $K,g\in L^1_{\rm loc}(\R^+),$ there is by Theorem 3.5 of Chapter II of \cite{grip} a unique solution $u$ in $L^1_{\rm loc}(\R^+)$ to (\ref{F3}).  It is given by the formula $u=g-r*g$, where the resolvent $r$ is also in $L^1_{\rm loc}(\R^+)$. In order to prove asymptotic stability for the linearized equation (\ref{D3}) we shall need to show that the solution $u(\cdot)$ of (\ref{F3}) satisfies $\lim_{t\ra\infty}u(t)=0$. Suppose now that the function $t\ra e^{-\sig t}K(t), \ t>0,$ is in $L^1(\R^+)$ for $\sig>-1/p$. It then follows from Corollary 4.2 of Chapter II of \cite{grip} that the function  $t\ra e^{-c t}r(t), \ t>0,$ is also in $L^1(\R^+)$ for $c>-1/p$ provided
\be \label{H3}
1+\hat{K}(z)\ne 0 \quad {\rm for \ } \Re z> -1/p \ ,
\ee
where $\hat{K}$ is the Laplace transform of $K$,
\be \label{I3}
\hat{K}(z) \ = \ \int_0^\infty K(t)e^{-z t} \ dt \ .
\ee
If the function $t\ra e^{-c t}r(t)$ is  in $L^1(\R^+)$ and the function $t\ra e^{-c t}g(t)$ is in $L^\infty(\R^+)$ for $c>-1/p$,  then it is easy to conclude that $\lim_{t\ra\infty} e^{-ct}u(t)=0$ for $c>-1/p$. 

Suppose now that the function $K(\cdot)$ has the property that $t\ra e^{ t/p}K(t)$ is positive and decreasing. It is easy to see then that $\Im \hat{K}(z)\ne 0$ if $\Im z\ne 0$ and $\Re z>-1/p$.  Since $\hat{K}(z)>0$ for real $z$ we conclude that (\ref{H3}) holds in this case. In the following we obtain conditions  so that the function $t\ra e^{t/p}K(t)$  with $K(\cdot)$ defined by (\ref{G3}) is positive decreasing. 
\begin{lem}
 Assume that  the function $h:(0,\infty)\ra\R$ of (\ref{B1})  is $C^2$, positive decreasing  convex, and in addition satisfies the inequalities  
\be \label{J3}
\sup_{y>0} \{y|h'(y)|\}<\infty, \quad h'(y)+\left[1+\frac{y}{p}\right] yh''(y) \ \ge  \ 0, \quad y> 0.
\ee
Assume  further that  $I(\cdot)$ satisfies property (a) of the introduction, and in addition that the gradient of $I(\cdot)$ at $\xi_p$ is a negative integrable function on $(0,\infty)$ with $pI(\xi_p)+[dI(\xi_p), \ A\xi_p]>0$. \\
Then $K(\cdot)$ defined by (\ref{G3}) has the property that the function $t\ra e^{t/p}K(t), \ t>0,$ is positive and decreasing.  
\end{lem}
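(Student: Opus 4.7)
The plan is to reduce the statement to two explicit computations. The first is to integrate the semigroup $e^{-Bt}$ along characteristics of $B$. Writing the equation $\pa_t u + Bu=0$ as $\pa_t u - (1+y/p)\pa_y u = -u/p$ and solving along $dy/dt=-(1+y/p)$ gives
\be
(e^{-Bt}\vp)(y) \ = \ e^{-t/p}\vp\big((y+p)e^{t/p}-p\big),
\ee
so that, with $\phi:=BA\xi_p$ and $Y(y,t):=(y+p)e^{t/p}-p$,
\be
e^{t/p}K(t) \ = \ -\frac{1}{D}\int_{\ve_0}^\infty dI(\xi_p)(y)\,\phi\big(Y(y,t)\big)\,dy, \quad D := pI(\xi_p)+[dI(\xi_p),A\xi_p]>0,
\ee
where the integration is restricted to $[\ve_0,\infty)$ by property (a) of $I(\cdot)$.

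The second computation produces an explicit form of $\phi$. From the equilibrium equation (\ref{C1}) one reads off $B\xi_p=h$ and $A\xi_p=p(h+\xi_p')$. Differentiating (\ref{D1}) and using (\ref{C1}) yields $(A\xi_p)'(y)=pyh'(y)/(p+y)$, and therefore
\be
\phi(y) \ = \ \int_y^\infty \frac{ph(y')}{(p+y')^2}\,dy' + \frac{yh(y)}{p+y} - yh'(y) \ .
\ee
All three summands are nonnegative since $h>0$ and $h$ is decreasing, so $\phi>0$ on $(0,\infty)$. Combined with $dI(\xi_p)\le 0$, this already proves that $e^{t/p}K(t)>0$ for every $t\ge 0$.

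For the decreasing property I would argue directly, without differentiating under the integral. Since $Y(y,t)$ is strictly increasing in $t$ for every $y>0$, once one knows that $\phi$ itself is decreasing on $(0,\infty)$ the map $t\mapsto \phi(Y(y,t))$ is decreasing for each $y$; multiplying by the nonnegative weight $-dI(\xi_p)(y)$ and integrating preserves the monotonicity. To verify $\phi'\le 0$, differentiate the explicit formula for $\phi$: the term $-ph(y)/(p+y)^2$ from the improper integral cancels against the corresponding term in $d[yh/(p+y)]/dy$, leaving
\be
\phi'(y) \ = \ -\frac{ph'(y)}{p+y} - yh''(y) \ ,
\ee
which is nonpositive precisely when $ph'(y)+y(p+y)h''(y)\ge 0$, i.e.\ precisely when the second inequality in (\ref{J3}) holds.

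The main obstacle is the bookkeeping in the last two steps: one must execute the differentiations of $A\xi_p$ and $\phi$ carefully and observe the cancellations among the $ph/(p+y)^2$ terms. The only analytic point to check is well-definedness of the inner products $[dI(\xi_p),e^{-Bt}BA\xi_p]$, which follows from integrability of $dI(\xi_p)$ together with boundedness of $\phi$ on $[\ve_0,\infty)$; the latter is immediate from continuity of $h$ on $[\ve_0,\infty)$ and the bound $\sup_{y>0}y|h'(y)|<\infty$ supplied by (\ref{J3}).
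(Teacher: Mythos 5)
Your proof is correct and follows essentially the same route as the paper. The paper reaches positivity via the explicit formula (\ref{K3}) for $A\xi_p$ and the identity (\ref{L3}), and gets the decreasing property from the operator computation $(B-1/p)BA\xi_p = -B[yh'(y)] = h'(y)+(1+y/p)yh''(y)$ in (\ref{M3}); since $(B-1/p)\phi = -(1+y/p)\phi'$, your direct calculation $\phi'(y) = -\frac{ph'(y)}{p+y}-yh''(y)$ is the same fact up to the positive factor $1+y/p$, and your choice to argue monotonicity pointwise along the flow $Y(y,t)$ rather than by formally differentiating $e^{t/p}K(t)$ under the inner product is a sound, slightly more elementary way to close the same argument.
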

\begin{proof}
We first observe  that $A\xi_p(\cdot)$ is a positive bounded function in the interval $(0,\infty)$.  In fact from (\ref{D1}) we have that
\be \label{K3}
\frac{1}{p}A\xi_p(y) \ = \ \int_y^\infty \frac{ph(y')}{(p+y')^2} \ dy'+\frac{yh(y)}{p+y} \ , \quad y> 0 \ ,
\ee
whence the positivity of  $A\xi_p(\cdot)$ follows.  To obtain the boundedness we use the identity
\be \label{P3}
\int_y^1zh'(z) \ dz \ = \ h(1)-yh(y)-\int_y^1 h(z) \ dz \  .
\ee
It follows from (\ref{B1}), (\ref{P3}) that the function $y\ra yh(y)$ converges as $y\ra 0$. Since (\ref{B1}) implies the integrability of $h(\cdot)$ on the interval $[0,1]$, we conclude that  $\lim_{y\ra 0} yh(y)=0$.  Furthermore,  $\lim_{y\ra 0} A\xi_p(y)$ exists and is finite. 
 Since $dI(\xi_p)$ is integrable on $(0,\infty)$ we conclude that the inner product $[dI(\xi_p), \ A\xi_p]$ is well defined.
Using also the fact that $h(\cdot)$ is decreasing, we further see  that $BA\xi_p(\cdot)$ is a non-negative function.  In fact we have that
\begin{multline} \label{L3}
BA\xi_p(y) \ = \ AB\xi_p(y) +\frac{d\xi_p(y)}{dy} \\
 = \  Ah(y)+\frac{d\xi_p(y)}{dy} \ = \ \frac{1}{p}A\xi_p(y)-yh'(y) \ \ge \ 0 \ .
\end{multline}
Since $h(\cdot)$ is decreasing, $dI(\xi_p)$ is a negative function, and  $pI(\xi_p)+[dI(\xi_p), \ A\xi_p]>0$, we conclude that $K(\cdot)$ is a positive function and $K(0)<\infty$.

To show that the function $t\ra e^{t/p}K(t)$  is decreasing we observe that
\begin{multline} \label{M3}
(B-1/p)BA\xi_p(y) \ = \  B\left[\frac{1}{p}A\xi_p(y)-yh'(y)\right]-\frac{1}{p}BA\xi_p(y) \\
= \ -B[yh'(y)] \ = \ h'(y)+\left[1+\frac{y}{p}\right] yh''(y)  \ .
\end{multline}
Hence the result follows from (\ref{G3}), (\ref{J3}) upon using the fact that $dI(\xi_p)(\cdot)$ is a negative function.
\end{proof}
\begin{rem}
Note that $h(\cdot)$ is decreasing and satisfies the second inequality in (\ref{J3}) if and only if there is a $C^1$ non-negative  decreasing function $k:(0,\infty)\ra\R$ such that $h'(y)=-(1+p/y)k(y), \ y>0$. Evidently $ h(\cdot)$ satisfies the first inequality of (\ref{J3}) if and only if $\lim_{y\ra 0}k(y)<\infty$.  Assuming $k(\cdot)$ is integrable on all intervals $(y,\infty), \ y>0$, we then have from (\ref{B1}) that
\be \label{N3}
h(y) \ = \ h_\infty+\int_y^\infty \left(1+\frac{p}{y'}\right)k(y') \ dy' \ , \quad y>0 \ .
\ee
Note from (\ref{N3}) that $\lim_{y\ra 0} h(y)=\infty$ unless $h(\cdot)$ is a constant function. 
\end{rem}
Next we prove an asymptotic stability result for the linearized equation (\ref{D3}). Note from (\ref{D1}) that if $h(\cdot)$ is positive decreasing and satisfies the first inequality of (\ref{J3}), then using (\ref{P3}) we see that $\|\xi_p(\cdot)\|_{2,\infty}<\infty$. 
\begin{proposition}
Assume that $h(\cdot)$ and $I(\cdot)$ satisfy the conditions of Lemma 3.1. Then the linear evolution equation (\ref{D3}) is asymptotically stable in the following sense:  Let the initial data $\tilde{\xi}_0:(0,\infty)\ra\R$  satisfy $\|\tilde{\xi}_0(\cdot)\|_{m,\infty}<\infty$ for either $m=1$ or $m=2$.  If $m=1$ there is for any $q>p$  a constant $C_q$ depending  on $q$ such that 
\be \label{Q*3}
\|\tilde{\xi}(\cdot,t)\|_{m,\infty}\le C_q e^{-t/q}\|\tilde{\xi}_0(\cdot)\|_{m,\infty} \quad  {\rm when \ }  t\ge 0 \ . 
\ee
If in addition to the assumptions of Lemma 3.1 the function $h(\cdot)$ also satisfies $\sup_{0<y<\infty} y^2h''(y)<\infty$, then (\ref{Q*3}) holds for $m=2$. 
\end{proposition}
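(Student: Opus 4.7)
The plan is to exploit the Volterra representation (\ref{F3}) together with the explicit semigroup generated by $-B$. The free term $e^{-Bt}\tilde\xi_0$ in (\ref{E3}) and the Duhamel correction driven by $u$ will both decay at the asserted rate $e^{-t/q}$ once an appropriate weighted $L^1$ bound on the resolvent of $K$ is in hand.

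First I would integrate the characteristics of $B$. Since $B\zeta(y)=(1/p)\zeta(y)-(1+y/p)\zeta'(y)$, the method of characteristics gives $(e^{-Bt}w_0)(y)=e^{-t/p}w_0(y_t)$ with $y_t=(y+p)e^{t/p}-p$; differentiating yields $(e^{-Bt}w_0)'(y)=w_0'(y_t)$ and $(e^{-Bt}w_0)''(y)=e^{t/p}w_0''(y_t)$. The map $y\mapsto y/y_t$ is increasing with supremum $e^{-t/p}$, so $y\le e^{-t/p}y_t$, which bundles together to give, for $m=0,1,2$,
$$\|e^{-Bt}w_0\|_{m,\infty}\le e^{-t/p}\|w_0\|_{m,\infty}.$$
The extra hypothesis $\sup_y y^2h''(y)<\infty$ needed for $m=2$ enters only through the bound $\|A\xi_p\|_{2,\infty}<\infty$, which follows from (\ref{D1}) and the identity $(A\xi_p)'(y)=pyh'(y)/(p+y)$.

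Next I would estimate the forcing $g(t)=[dI(\xi_p),e^{-Bt}B\tilde\xi_0]$. Property (a) of $\S1$ supports $dI(\xi_p)$ in $[\ve_0,\infty)$, so only $y\ge\ve_0$ and hence $y_t\ge\ve_0$ contribute. On that range $(1+y_t/p)/y_t\le 1/\ve_0+1/p$, so the $\|\cdot\|_{1,\infty}$ norm controls $|(B\tilde\xi_0)(y_t)|$, giving
$$|g(t)|\le C\|dI(\xi_p)\|_{L^1}e^{-t/p}\|\tilde\xi_0\|_{1,\infty}.$$
By Lemma 3.1 the function $e^{t/p}K(t)$ is positive decreasing, hence $e^{-\sig t}K(t)\in L^1$ for every $\sig>-1/p$ and the Paley--Wiener condition (\ref{H3}) holds via the argument preceding Lemma 3.1. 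Corollary 4.2 of Chapter II of \cite{grip} then yields $e^{-ct}r(t)\in L^1$ for every $c>-1/p$. Rescaling the identity $u=g-r*g$ by $e^{t/q}$ with $q>p$ fixed gives
$$|u(t)|\le C_q e^{-t/q}\|\tilde\xi_0\|_{1,\infty}.$$

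Finally I would substitute this into (\ref{E3}) and combine it with the semigroup estimate applied to $\tilde\xi_0$ and to $A\xi_p$ to obtain
$$\|\tilde\xi(\cdot,t)\|_{m,\infty}\le e^{-t/p}\|\tilde\xi_0\|_{m,\infty}+C\int_0^t|u(s)|e^{-(t-s)/p}\,ds\le C_q e^{-t/q}\|\tilde\xi_0\|_{m,\infty},$$
using $\int_0^t e^{-s/q}e^{-(t-s)/p}\,ds=(1/p-1/q)^{-1}(e^{-t/q}-e^{-t/p})$ together with $q>p$. I expect the main obstacle to be the Paley--Wiener input (\ref{H3}) and the accompanying weighted $L^1$ control of the resolvent, both packaged into Lemma 3.1 and the cited Volterra theory; once these are granted the remainder reduces to the semigroup algebra above, with the $m=2$ case invoking the extra hypothesis on $h''$ solely to keep $\|A\xi_p\|_{2,\infty}$ finite in the Duhamel term.
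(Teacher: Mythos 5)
Your proposal is correct and follows essentially the same route as the paper's own proof: bound the Volterra resolvent via the positive-decreasing property of $e^{t/p}K(t)$ and the Paley--Wiener condition (\ref{H3}), estimate $g$ using the support of $dI(\xi_p)$ in $[\ve_0,\infty)$, and feed the resulting decay of $u$ back through the Duhamel formula (\ref{E3}). The single difference is cosmetic: you package the contraction of $e^{-Bt}$ on the weighted norms as the inequality $\|e^{-Bt}w_0\|_{m,\infty}\le e^{-t/p}\|w_0\|_{m,\infty}$ for $m=0,1,2$, whereas the paper carries out the equivalent computation by differentiating (\ref{E3}) to obtain (\ref{R3}) and (\ref{S3}) and estimating each order separately.
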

\begin{proof}
Observe from (\ref{C2}), (\ref{D2})  that  the action of $e^{-Bt}$ on a function $\zeta:(0,\infty)\ra\R$ is given by
\be \label{Q3}
e^{-Bt}\zeta(y) \ = \  e^{-t/p}\zeta\left( e^{t/p}y+p\left[e^{t/p}-1\right] \         \right) \ .
\ee
We have already shown in Lemma 3.1 that the function $t\ra K(t)$ of (\ref{G3}) is positive and $t\ra e^{t/p}K(t)$ is decreasing. To see that  the function $t\ra e^{t/p}g(t)$ is bounded we first note from (\ref{A3}) that $\sup_{y>\ve_0} |B\zeta(y)|\le (1/p+1/\ve_0)\|\zeta(\cdot)\|_{1,\infty}$. Since $dI(\xi_p)$ is integrable and supported in $[\ve_0,\infty)$ we see from (\ref{Q3})  that $t\ra e^{t/p}g(t)$ is bounded by a constant times $\|\tilde{\xi}_0(\cdot)\|_{1,\infty}$. We conclude that if $u(\cdot)$ is the solution to the Volterra equation (\ref{F3}) then  $t\ra e^{t/q}u(t)$ is bounded by a constant times $\|\tilde{\xi}_0(\cdot)\|_{1,\infty}$ for any $q>p$. It follows from (\ref{E3}), (\ref{Q3}) and the boundedness of the function $A\xi_p$ on the interval $(0,\infty)$  that for any $q>p$ there is a constant $C_q$ such that 
$\sup_{0<y<\infty} |\tilde{\xi}(y,t)| \ \le C_q e^{-t/q}\|\tilde{\xi}_0(\cdot)\|_{1,\infty}$ when $t\ge 0$.  To bound the derivative we apply $D=\pa/\pa y$ to (\ref{E3}) and use (\ref{Q3}) to obtain the equation
\be \label{R3}
D\tilde{\xi}(t) \ = \ e^{-(B-1/p)t}D\tilde{\xi}(0)+\int_0^t ds \ [dI(\xi_p),B\tilde{\xi}(s)]\frac{e^{-(B-1/p)(t-s)}DA\xi_p}{pI(\xi_p)+[dI(\xi_p), \ A\xi_p]}  \ .
\ee
On differentiating (\ref{K3}) we see that $\sup_{0<y<\infty} |yDA\xi_p(y)|<\infty$.  We conclude from (\ref{Q3}), (\ref{R3}) that for any $q>p$ there is a constant $C_q$ such that  $\sup_{0<y<\infty} |y\pa\tilde{\xi}(y,t)/\pa y| \ \le C_q e^{-t/q}\|\tilde{\xi}_0(\cdot)\|_{1,\infty}$ when $t\ge 0$.    On differentiating (\ref{R3}) we have
\be \label{S3}
D^2\tilde{\xi}(t) \ = \ e^{-(B-2/p)t}D^2\tilde{\xi}(0)+\int_0^t ds \ [dI(\xi_p),B\tilde{\xi}(s)]\frac{e^{-(B-2/p)(t-s)}D^2A\xi_p}{pI(\xi_p)+[dI(\xi_p), \ A\xi_p]}  \ .
\ee
Now $\sup_{0<y<\infty} |y^2D^2A\xi_p(y)|<\infty$ provided $\sup_{0<y<\infty} y^2h''(y)<\infty$, in addition to the assumptions of Lemma 3.1. It follows  then from (\ref{Q3}), (\ref{S3}) that for any $q>p$ there is a constant $C_q$ such that  $\sup_{0<y<\infty} |y^2\pa^2\tilde{\xi}(y,t)/\pa y^2| \ \le C_q e^{-t/q}\|\tilde{\xi}_0(\cdot)\|_{2,\infty}$ when $t\ge 0$. The result follows.
\end{proof}
We generalize the result of Proposition 3.1 to apply to the non-linear PDE (\ref{C3}) by considering (\ref{C3}) as a perturbation of (\ref{D3}) of the form
\be \label{T3}
\frac{d\tilde{\xi}(t)}{dt} +[B+\del_1(\tilde{\xi}(t))A]\tilde{\xi}(t) -\left\{[dI(\xi_p),B\tilde{\xi}(t)]+\del_2(\tilde{\xi}(t))\right\}\frac{A\xi_p}{pI(\xi_p)+[dI(\xi_p), \ A\xi_p]} \ = \ 0 \ ,
\ee
where $\del_1(\cdot), \ \del_2(\cdot)$ are real valued functionals of $C^1$ functions $\tilde{\zeta}:(0,\infty)\ra\R$. If we take
\be \label{U3}
\del_1(\tilde{\zeta}(\cdot)) \ = \ -\frac{\left[dI(\xi_p+\tilde{\zeta}), \ B\tilde{\zeta}\right]}{pI(\xi_p+\tilde{\zeta})+\left[dI(\xi_p
+\tilde{\zeta}),  \ A\{\xi_p+\tilde{\zeta}\}\right]} \  ,
\ee 
and
 \begin{multline} \label{V3}
\del_2(\tilde{\zeta}(\cdot)) \ = \\
 \left[dI(\xi_p+\tilde{\zeta}), \ B\tilde{\zeta}\right]
\frac{pI(\xi_p)+\left[dI(\xi_p
),  \ A\xi_p\right]}
{pI(\xi_p+\tilde{\zeta})+\left[dI(\xi_p
+\tilde{\zeta}),  \ A\{\xi_p+\tilde{\zeta}\}\right]}- \left[dI(\xi_p), \ B\tilde{\zeta}\right] \  ,
\end{multline} 
then (\ref{C3}), (\ref{T3}) are equivalent. Next we obtain conditions on the functional $I(\cdot)$ which imply that $\del_1(\cdot), \ \del_2(\cdot)$ given by (\ref{U3}), (\ref{V3})  are Lipschitz continuous in the $m=1$ norm (\ref{I1}). 
\begin{lem}
Assume $I(\cdot), \ h(\cdot)$ satisfy the conditions of Lemma 3.1, and in addition $I(\cdot)$ is differentiable and has the property that there exist constants $C_1,\ve_1>0$  such that
\be \label{W3}
\|dI(\xi_p+\tilde{\zeta}_1)-dI(\xi_p+\tilde{\zeta}_2) \|_{L^1(\R^+)} \ \le \  C_1\|\tilde{\zeta}_1-\tilde{\zeta}_2\|_{1,\infty} \quad {\rm for \ } \|\tilde{\zeta}_j\|_{1,\infty}<\ve_1, \ j=1,2.
\ee
Then if $\del_1(\cdot), \ \del_2(\cdot)$ are given by (\ref{U3}), (\ref{V3}), there exist constants $C_2,\ve_2>0$ such that
\begin{eqnarray}  \label{X3}
|\del_1(\tilde{\zeta}_1)-\del_1(\tilde{\zeta}_2)| \ &\le& \  C_2\|\tilde{\zeta}_1-\tilde{\zeta}_2\|_{1,\infty} \ , \\
|\del_2(\tilde{\zeta}_1)-\del_2(\tilde{\zeta}_2)| \ &\le& \  
C_2\{\|\tilde{\zeta}_1\|_{1,\infty}+\|\tilde{\zeta}_2\|_{1,\infty}\}\|\tilde{\zeta}_1-\tilde{\zeta}_2\|_{1,\infty} \ , \nonumber
\end{eqnarray}
provided  $\|\tilde{\zeta}_j\|_{1,\infty}<\ve_2, \ j=1,2$.
\end{lem}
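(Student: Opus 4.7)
The plan is to reduce both Lipschitz estimates in (\ref{X3}) to the $L^1$-Lipschitz bound (\ref{W3}) on $dI(\cdot)$ combined with the elementary fact that, on the support $[\ve_0,\infty)$ of $dI(\xi_p+\tilde{\zeta})$ (guaranteed by property~(a) of $I(\cdot)$), the operators $A,B$ from (\ref{A3}) satisfy
\[
\sup_{y\ge\ve_0}\bigl(|A\tilde{\zeta}(y)|+|B\tilde{\zeta}(y)|\bigr) \ \le \ C(\ve_0,p)\|\tilde{\zeta}\|_{1,\infty}.
\]
Combining this with (\ref{W3}) yields the bilinear inequality
\[
\bigl|[dI(\xi_p+\tilde{\zeta}_1),B\tilde{\eta}]-[dI(\xi_p+\tilde{\zeta}_2),B\tilde{\eta}]\bigr| \ \le \ CC_1\|\tilde{\eta}\|_{1,\infty}\|\tilde{\zeta}_1-\tilde{\zeta}_2\|_{1,\infty}
\]
for $\|\tilde{\zeta}_j\|_{1,\infty}<\ve_1$, and the analogue with $A$ in place of $B$. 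Using the representation $I(\xi_p+\tilde{\zeta})-I(\xi_p)=\int_0^1[dI(\xi_p+\la\tilde{\zeta}),\tilde{\zeta}]\,d\la$ together with the uniform $L^1$-bound on $dI(\xi_p+\tilde{\zeta})$ following from (\ref{W3}), one also gets $|I(\xi_p+\tilde{\zeta})-I(\xi_p)|\le C\|\tilde{\zeta}\|_{1,\infty}$.

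Next, write $D(\tilde{\zeta})=pI(\xi_p+\tilde{\zeta})+[dI(\xi_p+\tilde{\zeta}),A(\xi_p+\tilde{\zeta})]$ and $D_0=D(0)$, which is strictly positive by hypothesis of Lemma 3.1. The estimates of the previous step yield $|D(\tilde{\zeta})-D_0|\le C\|\tilde{\zeta}\|_{1,\infty}$ and the Lipschitz bound $|D(\tilde{\zeta}_1)-D(\tilde{\zeta}_2)|\le C\|\tilde{\zeta}_1-\tilde{\zeta}_2\|_{1,\infty}$, so choosing $\ve_2\in(0,\ve_1)$ sufficiently small ensures $D(\tilde{\zeta})\ge D_0/2$ for $\|\tilde{\zeta}\|_{1,\infty}<\ve_2$. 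The first inequality in (\ref{X3}) is then routine: writing $\del_1(\tilde{\zeta})=-[dI(\xi_p+\tilde{\zeta}),B\tilde{\zeta}]/D(\tilde{\zeta})$ and applying the quotient-difference identity, the two resulting terms are controlled by the bilinear and Lipschitz estimates above.

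The subtler point is the second inequality, which requires that $\del_2$ vanish to \emph{second} order at $\tilde{\zeta}=0$. Rewriting (\ref{V3}) over the common denominator $D(\tilde{\zeta})$ produces the identity
\[
\del_2(\tilde{\zeta}) \ = \ \frac{D_0\,\bigl[dI(\xi_p+\tilde{\zeta})-dI(\xi_p),\,B\tilde{\zeta}\bigr]-\bigl[dI(\xi_p),\,B\tilde{\zeta}\bigr]\bigl(D(\tilde{\zeta})-D_0\bigr)}{D(\tilde{\zeta})}.
\]
The numerator is the sum of two bilinear terms in which each factor is individually of size $O(\|\tilde{\zeta}\|_{1,\infty})$, by (\ref{W3}) together with Step~1 for the $dI$-difference, and by the denominator estimate of Step~2 for $D(\tilde{\zeta})-D_0$. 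Applying $a_1b_1-a_2b_2=(a_1-a_2)b_1+a_2(b_1-b_2)$ to each of the two products, estimating each factor-difference by the Lipschitz bounds already established, and dividing by $D(\tilde{\zeta}_j)\ge D_0/2$, delivers the required bound $C_2\{\|\tilde{\zeta}_1\|_{1,\infty}+\|\tilde{\zeta}_2\|_{1,\infty}\}\|\tilde{\zeta}_1-\tilde{\zeta}_2\|_{1,\infty}$.

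The only genuine obstacle is bookkeeping: the quadratic-vanishing structure of $\del_2$ must be exposed by the common-denominator rearrangement \emph{before} one starts differencing. If one instead bounds the two terms of (\ref{V3}) separately, the small prefactor $\|\tilde{\zeta}_j\|_{1,\infty}$ is lost and one only obtains the weaker Lipschitz estimate enjoyed by $\del_1$, which would be insufficient for the contraction argument needed downstream in the non-linear stability result of Theorem 1.2.
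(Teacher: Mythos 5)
Your proof is correct and uses the same ingredients as the paper's (rather terse) argument: the integral representation for $I$, the $L^1$-Lipschitz bound (\ref{W3}) on $dI$, the bound $\sup_{y\ge\ve_0}|B\zeta(y)|\le(1/p+1/\ve_0)\|\zeta\|_{1,\infty}$ (and its $A$-analogue), and a lower bound on the denominator. The paper compresses all this into a single ``the result follows''; your explicit common-denominator rearrangement of $\del_2$, which exposes the quadratic vanishing before differencing, is precisely the step the paper leaves implicit and is indeed the crux of the second inequality.
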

\begin{proof}
We have that
\be \label{Y3}
I(\xi_p+\tilde{\zeta}_1)-I(\xi_p+\tilde{\zeta}_2) \ = \ \int_0^1 d\la \left[dI(\xi_p+\la\tilde{\zeta}_1+(1-\la)\tilde{\zeta}_2), \tilde{\zeta}_1-\tilde{\zeta}_2\right] \ .
\ee
It follows from (\ref{W3}), (\ref{Y3}) that
\be \label{Z3}
\left|I(\xi_p+\tilde{\zeta}_1)-I(\xi_p+\tilde{\zeta}_2) \right| \ \le \ \left\{ \|dI(\xi_p) \|_{L^1(\R^+)}+C_1\ve_1\right\}\|\tilde{\zeta}_1-\tilde{\zeta}_2\|_\infty 
\quad {\rm for \ } \|\tilde{\zeta}_j\|_{1,\infty}<\ve_1, \ j=1,2.
\ee
The result follows from (\ref{W3}), (\ref{Z3}) and the inequality $\sup_{y>\ve_0} |B\zeta(y)|\le (1/p+1/\ve_0)\|\zeta(\cdot)\|_{1,\infty}$.
\end{proof}
Let $\del:[0,\infty)\ra\R$ be a continuous function  and consider the linear PDE
\be \label{AA3}
\frac{d\tilde{\xi}(t)}{dt} +[B+\del(t)A]\tilde{\xi}(t)-[dI(\xi_p),B\tilde{\xi}(t)]\frac{A\xi_p}{pI(\xi_p)+[dI(\xi_p), \ A\xi_p]} \ = \ 0 \ .  
\ee
We show that the results of Proposition 3.1 extend to solutions of (\ref{AA3}) provided $\|\del(\cdot)\|_\infty$ is sufficiently small. 
\begin{lem}
Assume that $h(\cdot)$ and $I(\cdot)$ satisfy the conditions of Lemma 3.1 and also that $\sup_{y>0}y^2h''(y)<\infty$.   Assume further that $\del:[0,\infty)\ra\R$ is a continuous function and $\|\del(\cdot)\|_\infty\le 1/p$. Then  the linear evolution equation  (\ref{AA3}) with initial data $\tilde{\xi}_0:(0,\infty)\ra\R$  satisfying $\|\tilde{\xi}_0(\cdot)\|_{1,\infty}<\infty$ has a unique solution globally in time,
$\tilde{\xi}(y,t;\del(\cdot)), \ y,t\ge 0,$ which has $\|\tilde{\xi}(\cdot,t;\del(\cdot))\|_{1,\infty}<\infty$ for all $t\ge 0$.  For any $q>p$ there exists $C_q,\ve_q>0$ such that
if $\|\del(\cdot)\|_\infty<\ve_q$ then for $m=1,2,$
\be \label{AB3}
\|\tilde{\xi}(\cdot,t;\del(\cdot))\|_{m,\infty}\le C_q e^{-t/q}\|\tilde{\xi}_0(\cdot)\|_{m,\infty} \quad  {\rm when \ }  t\ge 0 \ . 
\ee
We may further choose $C_q,\ve_q>0$  such that if $\|\del_j(\cdot)\|_\infty<\ve_q, \ j=1,2,$ then
\begin{multline} \label{AB*3}
\|\tilde{\xi}(\cdot,t;\del_1(\cdot))-\tilde{\xi}(\cdot,t;\del_2(\cdot))\|_{1,\infty}\le \\
C_q te^{-t/q}\|\tilde{\xi}_0(\cdot)\|_{2,\infty}\|\del_1(\cdot)-\del_2(\cdot)\|_\infty \quad  {\rm when \ }  t\ge 0 \ . 
\end{multline}
\end{lem}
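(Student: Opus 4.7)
The plan is to reduce (\ref{AA3}) to a non-convolution Volterra integral equation for the scalar function $u(t) = [dI(\xi_p), B\tilde{\xi}(t)]$ by the method of characteristics, and then to obtain the decay (\ref{AB3}) and the Lipschitz estimate (\ref{AB*3}) by treating the time-dependent perturbation $\delta(\cdot)$ as a small perturbation of the $\delta\equiv 0$ case handled in Proposition~3.1. First observe that $B+\delta(t)A$ is the first-order transport operator $\rho(t)\zeta-(1+\rho(t)y)D_y\zeta$ with $\rho(t)=1/p+\delta(t)$, so the method of $\S2$ gives the two-parameter semigroup
\[
U_\delta(t,s)\zeta(y) = \exp\!\left[-\!\int_s^t\!\rho(s')\,ds'\right]\zeta\!\bigl(y^*_\delta(s;t,y)\bigr),\qquad y^*_\delta(s;t,y) = e^{\int_s^t\rho}\,y + \int_s^t e^{\int_{s'}^t\rho}\,ds',
\]
which reduces to (\ref{Q3}) when $\delta\equiv 0$. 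Duhamel's formula applied to (\ref{AA3}) yields
\[
\tilde{\xi}(t) = U_\delta(t,0)\tilde{\xi}_0 + \int_0^t U_\delta(t,s)\,\frac{A\xi_p}{pI(\xi_p)+[dI(\xi_p),A\xi_p]}\,u(s)\,ds,
\]
and applying $[dI(\xi_p),B\,\cdot\,]$ to both sides produces the Volterra equation
\[
u(t) + \int_0^t K_\delta(t,s)\,u(s)\,ds = g_\delta(t),\qquad 0\le s\le t,
\]
with
\[
K_\delta(t,s) = -\frac{[dI(\xi_p),\,BU_\delta(t,s)A\xi_p]}{pI(\xi_p)+[dI(\xi_p),A\xi_p]},\qquad g_\delta(t) = [dI(\xi_p),\,BU_\delta(t,0)\tilde{\xi}_0].
\]
Local existence with $\|\tilde{\xi}(\cdot,t)\|_{1,\infty}<\infty$ follows by a standard Gronwall argument, and global existence is immediate from linearity.

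The heart of the proof is the decay (\ref{AB3}) for $m=1$. The quantitative input is that $U_\delta$ is close to $U_0$ when $\|\delta\|_\infty$ is small. Writing $\alpha(s,t)=\int_s^t\rho$, one has $|\alpha(s,t)-(t-s)/p|\le \|\delta\|_\infty(t-s)$ and consequently
\[
|y^*_\delta(s;t,y)-y^*_0(s;t,y)|\le C(t-s)\|\delta\|_\infty e^{(1/p+\|\delta\|_\infty)(t-s)}(1+y).
\]
Combining this with the uniform bounds on $A\xi_p,\,DA\xi_p$ from the proof of Lemma~3.1 (after observing that $(A\xi_p)'(y)=pyh'(y)/(p+y)$ is bounded and that $(1+y/p)(A\xi_p)'(y^*_\delta)$ is controlled uniformly in $y\ge 0$ thanks to the cancellation $p+y^*_\delta = e^{\alpha}(p+y)$ in the unperturbed case and a corresponding approximate identity when $\delta\ne 0$), one obtains for any $q>p$ the weighted perturbation estimate
\[
\sup_{s_0\ge 0}\int_0^\infty e^{t/q}\bigl|K_\delta(t+s_0,s_0) - K_0(t)\bigr|\,dt \le C_q\,\|\delta\|_\infty,
\]
and an analogous bound for $g_\delta-g_0$; the exponential growth $e^{(t-s)/p}$ in the shift difference is absorbed by the prefactor $e^{-\alpha(s,t)}$ in $U_\delta$, with a margin available because $q>p$. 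By Proposition~3.1 and Corollary~4.2 of Chapter~II of \cite{grip}, the unperturbed resolvent $r_0$ associated to $K_0$ satisfies $e^{t/q}r_0(t)\in L^1(\R^+)$ for every $q>p$. A Neumann-series perturbation argument then produces a two-parameter resolvent $r_\delta(t,s)$ for the perturbed Volterra equation with $\sup_{s_0\ge 0}\int_{s_0}^\infty e^{(t-s_0)/q}|r_\delta(t,s_0)|\,dt<\infty$ provided $\|\delta\|_\infty<\ve_q$ is sufficiently small. This yields $|u(t)|\le C_q e^{-t/q}\|\tilde{\xi}_0\|_{1,\infty}$, and substituting back into the Duhamel formula gives (\ref{AB3}) for $m=1$.

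The $m=2$ case of (\ref{AB3}) follows by differentiating the Duhamel formula twice in $y$, exactly as in the passage from (\ref{E3}) to (\ref{R3})--(\ref{S3}); the extra hypothesis $\sup_{y>0}y^2h''(y)<\infty$ guarantees $\sup_{y>0}|y^2D^2A\xi_p(y)|<\infty$, which is all the argument requires. For the Lipschitz estimate (\ref{AB*3}), I would subtract the Duhamel formulas for the two values of $\delta$ and decompose the difference into contributions from $[U_{\delta_1}(t,0)-U_{\delta_2}(t,0)]\tilde{\xi}_0$, from $[U_{\delta_1}(t,s)-U_{\delta_2}(t,s)]A\xi_p$ inside the integral, and from $u_1-u_2$. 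The first two are controlled by the shift estimate $|y^*_{\delta_1}-y^*_{\delta_2}|\le C(t-s)\|\delta_1-\delta_2\|_\infty e^{(t-s)/p}(1+y)$ together with $\|\tilde{\xi}_0\|_{2,\infty}$ (which supplies the Lipschitz constants needed via the mean value theorem applied along shifted characteristics), while $u_1-u_2$ solves a Volterra equation whose data and kernel differ by $O(\|\delta_1-\delta_2\|_\infty)$; the resolvent estimate of the previous paragraph then yields the required bound, with the prefactor $t$ in (\ref{AB*3}) arising from the factor $(t-s)$ in the shift estimate. The main obstacle I expect is the weighted kernel perturbation bound above: one must carefully balance the exponential growth $e^{(t-s)/p}$ of the characteristic shift against the decay $e^{-\alpha(s,t)}$ in $U_\delta$ and the uniform bounds on $A\xi_p,\,DA\xi_p$, so that the weighted integral depends linearly on $\|\delta\|_\infty$ with an $s_0$-independent constant strictly smaller than the reciprocal of the weighted $L^1$ norm of $r_0$; once this is established the rest is standard Volterra perturbation theory.
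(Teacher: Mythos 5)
Your proposal follows essentially the same route as the paper's proof: reduce to the scalar Volterra equation for $u(t)=[dI(\xi_p),B\tilde{\xi}(t;\del(\cdot))]$ via the characteristics/Duhamel representation, treat the time-dependent kernel as a small perturbation of the translation-invariant kernel from Proposition 3.1, obtain a weighted $L^1$ resolvent estimate by a Neumann-series argument, and derive (\ref{AB*3}) by subtracting the two Duhamel representations and using the Lipschitz dependence of the characteristic flow on $\del(\cdot)$. The one minor quantitative difference is that you assert the weighted kernel perturbation bound is linear in $\|\del(\cdot)\|_\infty$ with an $s_0$-uniform constant; the paper states for the invertibility step only the weaker uniform smallness (\ref{AL3}), reserving the linear-in-$\|\del\|_\infty$ bound (\ref{AQ3}) for the Lipschitz estimate (\ref{AB*3}) — but that linear bound is indeed available (from the same estimate that yields (\ref{AQ3}), taking $\del_2\equiv 0$, combined with the exponential decay coming from $\rho_\la\ge 1/p-\|\del\|_\infty$), so your stronger statement is correct and subsumes what the paper needs.
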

\begin{proof}
We observe analogously to (\ref{E3})  that the solution to (\ref{AA3}) satisfies
\be \label{AC3}
\tilde{\xi}(t;\del(\cdot)) \ = \ G(t,0;\del(\cdot))\tilde{\xi}(0)+\int_0^t ds \ [dI(\xi_p),B\tilde{\xi}(s;\del(\cdot))]\frac{G(t,s;\del(\cdot))A\xi_p}{pI(\xi_p)+[dI(\xi_p), \ A\xi_p]}  \ ,
\ee
where   $G(t,s;\del(\cdot)),  \ 0<s<t,$ acts on functions $\zeta:(0,\infty)\ra\R$ as
\be \label{AD3}
G(t,s;\del(\cdot))\zeta(y) \ = \ \exp\left[-\int_s^t\rho(s') \ ds'\right]\zeta(y_{\rho(\cdot)}(s)) \ , \quad \rho(s')=\frac{1}{p}+\del(s') \ ,
\ee
and $y(\cdot)=y_{\rho(\cdot)}(s)$ is given by (\ref{C2}). We set $u(t)=[dI(\xi_p),B\tilde{\xi}(t;\del(\cdot))]$, and  then (\ref{AC3}) yields an integral equation for $u$,
\be \label{AE3}
u(t;\del(\cdot))+\int_0^t  K(t,s;\del(\cdot))u(s;\del(\cdot)) \ ds \ = \ g(t;\del(\cdot))   \ , \quad t>0,
\ee
where the functions $K,g$ are given by
\be \label{AF3}
K(t,s;\del(\cdot))  = -\frac{[dI(\xi_p),BG(t,s;\del(\cdot))A\xi_p]}{pI(\xi_p)+[dI(\xi_p), \ A\xi_p]}  \ , \quad g(t;\del(\cdot))=[dI(\xi_p),BG(t,0;\del(\cdot))\tilde{\xi}(0)] \quad t\ge 0.
\ee
Using the fact that $\sup_{y>\ve_0} |B\zeta(y)|\le (1/p+1/\ve_0)\|\zeta(\cdot)\|_{1,\infty}$, it follows from (\ref{AD3}), (\ref{AF3}) that $g:[0,\infty)\ra\R$ and $K:\{(t,s): \ 0\le s\le t\}\ra\R$ are continuous functions. In addition because the function $\rho(\cdot)$ in (\ref{AD3}) is non-negative,  there is a constant $C$ such that
\begin{eqnarray} \label{AG3}
 |K(t,s;\del(\cdot))| \ &\le& \  C\exp\left[-\int_s^t\rho(s') \ ds'\right]  \quad t\ge s\ge 0 \ , \\
 |g(t;\del(\cdot))| \ &\le& \ C\exp\left[-\int_0^t\rho(s') \ ds'\right]\|\tilde{\xi}(0)\|_{1,\infty} \quad t\ge 0 \ . \nonumber
\end{eqnarray}
It follows from (\ref{AG3}) and the theory of Volterra integral equations (see Chapter 9 of \cite{grip}) that  there is a unique continuous solution $u:[0,\infty)\ra\R$  to (\ref{AE3}). Global existence and the inequality  $\|\tilde{\xi}(\cdot,t;\del(\cdot))\|_{1,\infty}<\infty$ now follows as in Proposition 3.1 from the representation (\ref{AC3}). 

To obtain the inequality (\ref{AB3}) it is sufficient to show that for any $q>p$ there exists $C_q,\ve_q>0$ such that if $\|\del(\cdot)\|_\infty<\ve_q$ then the solution $u(t;\del(\cdot))$ of (\ref{AE3}) satisfies $|u(t;\del(\cdot))|\le C_qe^{-t/q}\|\tilde{\xi}_0(\cdot)\|_{1,\infty}, \ t\ge 0$. To do this we write (\ref{AE3}) in operator notation as
\be \label{AH3}
u+K_{\del(\cdot)}u \ = \ g \ , \quad K_{\del(\cdot)}u(t) \ = \ \int_0^t K(t,s;\del(\cdot))u(s) \ ds \ ,
\ee
with solution
\be \label{AI3}
u \ = \ g-R_{\del(\cdot)}g \ , \quad R_{\del(\cdot)}g(t,\del(\cdot)) \ = \ \int_0^t r(t,s;\del(\cdot)) g(s;\del(\cdot)) \ ds \ .
\ee
For $q>p$ we set $g_q(t,\del(\cdot))=e^{t/q}g(t,\del(\cdot)), \ t\ge 0, $ and $K_q(t,s,\del(\cdot))=e^{(t-s)/q}K(t,s,\del(\cdot)), \ t\ge s\ge 0$. Then (\ref{AH3}) is equivalent to the integral equation
\be \label{AJ3}
u_q+K_{\del(\cdot),q}u_q \ = \ g_q \ , \quad K_{\del(\cdot),q}u(t) \ = \ \int_0^t K_q(t,s,\del(\cdot))u_q(s) \ ds \ .
\ee
For $\|\del(\cdot)\|_\infty<1/p-1/q$ we have from (\ref{AG3}) that $\sup_{t>0} \int_0^t |K_q(t,s,\del(\cdot))| \ ds<\infty$, whence the operator $K_{\del(\cdot),q}$ is bounded on $L^\infty(\R^+)$. 
The solution to (\ref{AJ3}) is given by $u_q(t;\del(\cdot))=e^{t/q}u(t;\del(\cdot)), \ t\ge0$, where $u(t;\del(\cdot))$ is the solution to (\ref{AH3}), and it can be represented as
\be \label{AK3}
u_q \ = \ g_q-R_{\del(\cdot),q}g_q \ , \quad R_{\del(\cdot),q}g_q(t,\del(\cdot)) \ = \ \int_0^t r_q(t,s,\del(\cdot)) g_q(s,\del(\cdot)) \ ds \ ,
\ee
where $r_q(t,s,\del(\cdot))=e^{(t-s)/q}r(t,s,\del(\cdot)), \ t\ge s\ge 0$.

We show that for $\|\del(\cdot)\|_\infty$ sufficiently small one has  $\sup_{t>0} \int_0^t |r_q(t,s,\del(\cdot))| \ ds<\infty$, whence $\|u_q\|_\infty\le C_q\|\tilde{\xi}(0)\|_{1,\infty}$ for some constant $C_q$ depending on $q$. To do this we regard (\ref{AJ3}) as a perturbation about the $\del(\cdot)\equiv 0$ integral equation when $K_{\del(\cdot),q}$ corresponds to the kernel $K_q(t,s,\del(\cdot))=e^{(t-s)/q}K(t-s)$ with $K(\cdot)$ given by (\ref{G3}).  From (\ref{K3}), (\ref{AF3})  we see that for any $\tau,\ve>0$ there exists $\eta>0$ such that $|K(t,s,\del(\cdot))-K(t-s)|\le \ve$ when $s+\tau\ge t\ge s\ge 0$ provided $\|\del(\cdot)\|_\infty<\eta$. It follows from this and (\ref{AG3}) that for any $q>p$ we can choose $\ve_q>0$ sufficiently small so that  if $\|\del(\cdot)\|_\infty<\ve_q$ then 
\be \label{AL3}
\sup_{t>0}\int_0^t\left|K_q(t,s,\del(\cdot))-e^{(t-s)/q}K(t-s)\right| \ ds \ < \ \left\{1+\int_0^\infty e^{s/q}|r(s)| \ ds\right\}^{-1} \ ,
\ee
where $r(\cdot)$ is the resolvent for $K(\cdot)$ of (\ref{G3}). We conclude from (\ref{AL3}) that the integral equation (\ref{AJ3}) is invertible in the space $L^\infty(\R^+)$. Now we can argue as in Proposition 3.1 to show using the representation  (\ref{AC3}) that the results of Proposition 3.1 continue to hold for any $q>p$, provided we choose $\ve_q>0$ sufficiently small. In particular, the inequality (\ref{AB3}) holds. 

Next  we examine the dependence  on the function $\del(\cdot)$ of the solution to  (\ref{AA3}).  Let $\del_1,\del_2:[0,\infty)\ra\R$ be two continuous functions and for $0<\la<1$ denote by $\rho_\la(\cdot)$ the function $\rho_\la(\cdot)=1/p+\la\del_1(\cdot)+(1-\la)\del_2(\cdot)$. Then we have from (\ref{C2}), (\ref{AD3}) and the fundamental theorem of calculus that
\begin{multline} \label{AM3}
G(t,s,\del_1(\cdot))\zeta(y)-G(t,s,\del_2(\cdot))\zeta(y) \ = \\
\int_0^1d\la \ \left[    \int_s^t \{\del_2(s')-\del_1(s')\} \ ds'              \right] \exp\left[-\int_s^t\rho_\la(s') \ ds'\right]\zeta(y_{\rho_\la(\cdot)}(s)) \\
+ \int_0^1d\la \ \left[    \int_s^t \{\del_1(s')-\del_2(s')\} \ ds'              \right] yD\zeta(y_{\rho_\la(\cdot)}(s)) \\
+ \int_0^1d\la \ \left[    \int_s^t ds' \ \left\{\int_s^{s'} \{\del_1(s'')-\del_2(s'')\} \ ds'' \right\}  \ \exp\left\{-\int_{s'}^t\rho_\la(s'') \ ds''\right\}  \right] D\zeta(y_{\rho_\la(\cdot)}(s)) \ .
\end{multline}
Observe now from (\ref{C2}) that
\begin{multline} \label{AN3}
\left| \  \left[    \int_s^t ds' \ (s'-s)  \ \exp\left\{-\int_{s'}^t\rho_\la(s') \ ds'\right\}  \right]D\zeta(y_{\rho_\la(\cdot)}(s)) \  \right| \ \le \\
\exp\left[-\int_s^t\rho_\la(s') \ ds'\right] \ \left[\sup_{y>0}y|D\zeta(y)|\right] \ \times \\
 \int_s^t ds' \ (s'-s)  \exp\left[\int_s^{s'}\rho_\la(s') \ ds'\right] \ \Bigg/   \int_s^t ds' \   \exp\left[\int_s^{s'}\rho_\la(s'') \ ds''\right]  \ .
\end{multline}
We conclude from (\ref{AM3}), (\ref{AN3}) that
\begin{multline} \label{AO3}
\|G(t,s,\del_1(\cdot))\zeta(\cdot)-G(t,s,\del_2(\cdot))\zeta(\cdot)\|_\infty \ \le \\
2(t-s)\sup_{0<\la<1}\exp\left[-\int_s^t\rho_\la(s') \ ds'\right]\|\zeta(\cdot)\|_{1,\infty}\|\del_1(\cdot)-\del_2(\cdot)\|_\infty  \ .
\end{multline} 
By differentiating (\ref{AM3}) we similarly see that
\begin{multline} \label{AP3}
\sup_{y>0}|yDG(t,s,\del_1(\cdot))\zeta(y)-yDG(t,s,\del_2(\cdot))\zeta(y)\ | \ \le \\
2(t-s)\sup_{0<\la<1}\exp\left[-\int_s^t\rho_\la(s') \ ds'\right]\|\zeta(\cdot)\|_{2,\infty}\|\del_1(\cdot)-\del_2(\cdot)\|_\infty  \ .
\end{multline} 
Since $\|A\xi_p(\cdot)\|_{2,\infty}<\infty$, it follows from (\ref{AF3}), (\ref{AO3}), (\ref{AP3}) that for some constant $C$, 
\begin{multline} \label{AQ3}
 |K(t,s;\del_1(\cdot))-K(t,s;\del_2(\cdot))| \ \le  \\
  C(t-s)\sup_{0<\la<1}\exp\left[-\int_s^t\rho_\la(s') \ ds'\right]\|\del_1(\cdot)-\del_2(\cdot)\|_\infty  \ .  \quad t\ge s\ge 0 \  .
\end{multline}
Similarly we have that
\begin{multline} \label{AR3}
 |g(t;\del_1(\cdot))-g(t;\del_2(\cdot))| \ \le  \\
  Ct\sup_{0<\la<1}\exp\left[-\int_0^t\rho_\la(s') \ ds'\right]\|\tilde{\xi}(0)\|_{2,\infty}\|\del_1(\cdot)-\del_2(\cdot)\|_\infty  \ .  \quad t\ge  0 \  ,
\end{multline}
for some constant $C$.

It follows from (\ref{AQ3}), (\ref{AR3}) that we may  choose $C_q,\ve_q>0$  such that if $\|\del_j(\cdot)\|_\infty<\ve_q, \ j=1,2,$ then
\begin{multline} \label{AS3}
|u(t;\del_1(\cdot))-u(t;\del_2(\cdot))| \ \le \\
C_q te^{-t/q}\|\tilde{\xi}_0(\cdot)\|_{2,\infty}\|\del_1(\cdot)-\del_2(\cdot)\|_\infty \quad  {\rm when \ }  t\ge 0 \ . 
\end{multline}
The inequality (\ref{AB*3}) then follows from the representation (\ref{AC3}) and the inequalities (\ref{AO3}), (\ref{AP3}), (\ref{AS3}). 
\end{proof}
\begin{theorem}
Assume that $h(\cdot)$ and $I(\cdot)$ satisfy the conditions of Lemma 3.1 and also that $\sup_{y>0}y^2h''(y)<\infty$.   Let $\del_1(\cdot), \ \del_2(\cdot)$ be real valued functionals of $C^1$ functions $\tilde{\zeta}:(0,\infty)\ra\R$, which satisfy $\del_1(0)=\del_2(0)=0$ and the local Lipschitz conditions (\ref{X3}).  Then there exists  $\ve>0$ such that  the nonlinear evolution equation  (\ref{T3}) with initial data $\tilde{\xi}_0:(0,\infty)\ra\R$  satisfying 
$\|\tilde{\xi}_0(\cdot)\|_{1,\infty}<\ve, \ \|\tilde{\xi}_0(\cdot)\|_{2,\infty}<\infty,$ has a unique solution globally in time. For any $q>p$ there exists $C_q,\ve_q>0$ such that for  $\ve\le \ve_q$ and $m=1,2$,
\be \label{AT3}
\|\tilde{\xi}(\cdot,t)\|_{m,\infty}\le C_q e^{-t/q}\|\tilde{\xi}_0(\cdot)\|_{m,\infty} \quad  {\rm when \ }  t\ge 0 \ . 
\ee
\end{theorem}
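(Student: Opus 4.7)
The plan is to recast (\ref{T3}) as a fixed-point problem for a nonlinear map $\Phi$, viewing (\ref{T3}) as a quasilinear perturbation of the linear equation (\ref{AA3}) treated in Lemma 3.3, and closing via Banach fixed-point in a pair of exponentially weighted norms. Fix $q>p$ and an intermediate exponent $q^*$ with $p<q^*<q$. Let $X_{q^*}$ be the Banach space of continuous curves $\tilde\eta:[0,\infty)\to C^1((0,\infty))$ with norm $\|\tilde\eta\|_{X_{q^*}}=\sup_{t\ge 0}e^{t/q^*}\|\tilde\eta(\cdot,t)\|_{1,\infty}$, and define $X_q$ analogously with the weaker weight $e^{t/q}$. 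For $\tilde\eta\in X_{q^*}$ let $\Phi(\tilde\eta)=\tilde\xi$ solve the linear inhomogeneous equation
\begin{equation*}
\frac{d\tilde\xi(t)}{dt}+[B+\del_1(\tilde\eta(t))A]\tilde\xi(t)-[dI(\xi_p),B\tilde\xi(t)]\frac{A\xi_p}{c_p} \ = \ \del_2(\tilde\eta(t))\frac{A\xi_p}{c_p}
\end{equation*}
with $\tilde\xi(0)=\tilde\xi_0$ and $c_p=pI(\xi_p)+[dI(\xi_p),A\xi_p]$. A fixed point of $\Phi$ then solves (\ref{T3}). From (\ref{X3}) applied with $\tilde\zeta_2=0$ together with $\del_j(0)=0$, for $\tilde\eta$ in the ball $B_r=\{\|\tilde\eta\|_{X_{q^*}}\le r\}$ one has the crucial pointwise bounds $|\del_1(\tilde\eta(t))|\le C_2 r e^{-t/q^*}$ and $|\del_2(\tilde\eta(t))|\le C_2 r^2 e^{-2t/q^*}$.

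To establish the self-map property, decompose $\tilde\xi=\tilde\xi_{\mathrm{hom}}+\tilde\xi_{\mathrm{par}}$ according to the initial data and the inhomogeneous forcing $\del_2(\tilde\eta(t))A\xi_p/c_p$. Provided $C_2 r<\ve_{q^*}$, Lemma 3.3 applies with frozen coefficient $\del(t)=\del_1(\tilde\eta(t))$, and (\ref{AB3}) gives $\|\tilde\xi_{\mathrm{hom}}(\cdot,t)\|_{1,\infty}\le C_{q^*}e^{-t/q^*}\|\tilde\xi_0\|_{1,\infty}$. The particular solution is handled by an analog of the Volterra equation (\ref{AE3}) with an extra source term generated by the forcing; the $e^{-2t/q^*}$ decay of the forcing, combined with the Volterra resolvent bounds built in the proof of Lemma 3.3, yields $\|\tilde\xi_{\mathrm{par}}(\cdot,t)\|_{1,\infty}\le C r^2 e^{-t/q^*}$. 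Setting $r=2C_{q^*}\ve$ and shrinking $\ve$ makes $\Phi:B_r\to B_r$. For the contraction step I would use (\ref{AB*3}) together with the Lipschitz bounds (\ref{X3}), which give $\|\del_1(\tilde\eta_1)-\del_1(\tilde\eta_2)\|_\infty\le C_2\|\tilde\eta_1-\tilde\eta_2\|_{X_q}$ and a corresponding linear-in-$r$ bound for $\del_2$. The unwanted factor $te^{-t/q^*}$ appearing in (\ref{AB*3}) is dominated by $Ce^{-t/q}$ since $q>q^*$, and a parallel Duhamel estimate for the inhomogeneous piece yields $\|\Phi(\tilde\eta_1)-\Phi(\tilde\eta_2)\|_{X_q}\le Cr\|\tilde\eta_1-\tilde\eta_2\|_{X_q}$. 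Since pointwise bounds are preserved under $X_q$-limits, $B_r$ is closed in $X_q$, and Banach's theorem on $(B_r,\|\cdot\|_{X_q})$ produces a unique fixed point $\tilde\xi\in B_r\subset X_{q^*}$, giving the $m=1$ case of (\ref{AT3}) with decay at rate $1/q^*>1/q$.

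The $m=2$ bound follows a posteriori. With $\tilde\xi$ in hand, $\del(t):=\del_1(\tilde\xi(t))$ has $\|\del\|_\infty\le C\ve$ and the forcing $\del_2(\tilde\xi(t))A\xi_p/c_p$ decays like $e^{-2t/q^*}$ in $\|\cdot\|_{2,\infty}$, so applying the $m=2$ case of (\ref{AB3}) from Lemma 3.3, together with a Duhamel integration of the quadratically decaying inhomogeneous source, yields $\|\tilde\xi(\cdot,t)\|_{2,\infty}\le C_q e^{-t/q}\|\tilde\xi_0\|_{2,\infty}$. The main obstacle is the mismatch between the self-map and contraction norms, caused by the linear factor $t$ in (\ref{AB*3}): one cannot close both steps with a single exponential weight, so the argument hinges on the sandwich $q^*<q$ and the continuous embedding $X_{q^*}\hookrightarrow X_q$. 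A secondary technical point is the needed extension of Lemma 3.3 to cover the inhomogeneous forcing arising from $\del_2$; but the quadratic vanishing $|\del_2(\tilde\zeta)|=O(\|\tilde\zeta\|_{1,\infty}^2)$, a direct consequence of (\ref{X3}) with $\tilde\zeta_2=0$ and $\del_2(0)=0$, ensures that this perturbation is strictly lower order in $r$ and does not threaten either the self-map estimate or the contraction.
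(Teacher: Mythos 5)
Your proposal takes a genuinely different route from the paper: you attempt a single global-in-time fixed-point in exponentially weighted spaces, with the two-norm device (self-map in $X_{q^*}$, contraction in the weaker $X_q$) to absorb the linear factor $t$ appearing in (\ref{AB*3}), whereas the paper first proves local-in-time existence by contraction on a short interval $[0,T_0]$, then continues the solution globally via the a priori bounds (\ref{AW3})--(\ref{AY3}) based on (\ref{AB3}), and finally derives the decay estimate a posteriori by inserting exponential weights into the same a priori bounds. Your self-map estimates are fine. However, the contraction step has a genuine gap. The homogeneous part of $\Phi(\tilde{\eta})$ is $G(t,0;\del_1(\tilde{\eta}(\cdot)))\tilde{\xi}_0$, and (\ref{AB*3}) gives
\begin{equation*}
\left\|G(t,0;\del_1(\tilde{\eta}_1))\tilde{\xi}_0-G(t,0;\del_1(\tilde{\eta}_2))\tilde{\xi}_0\right\|_{1,\infty}
\ \le \ C_{q^*}\,te^{-t/q^*}\,\|\tilde{\xi}_0\|_{2,\infty}\,\|\del_1(\tilde{\eta}_1(\cdot))-\del_1(\tilde{\eta}_2(\cdot))\|_\infty \ .
\end{equation*}
After weighting by $e^{t/q}$ and using (\ref{X3}), this piece contributes a contraction constant proportional to $\sup_{t\ge 0}te^{-t(1/q^*-1/q)}\cdot\|\tilde{\xi}_0\|_{2,\infty}$, which scales with $\|\tilde{\xi}_0\|_{2,\infty}$ and not with $r$ or $\ve$. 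Since the theorem only requires $\|\tilde{\xi}_0\|_{2,\infty}<\infty$ with no bound on its size, while $\ve$ must be fixed uniformly in advance, this constant can be arbitrarily large, and your claimed bound $\|\Phi(\tilde{\eta}_1)-\Phi(\tilde{\eta}_2)\|_{X_q}\le Cr\|\tilde{\eta}_1-\tilde{\eta}_2\|_{X_q}$ does not follow. The loss of one derivative in (\ref{AB*3}) is intrinsic --- see (\ref{AM3})--(\ref{AP3}), where comparing two transport flows in the $\|\cdot\|_{1,\infty}$ norm costs a $\|\cdot\|_{2,\infty}$ norm of the data --- and no choice of exponential weights removes it. Recasting the problem with $\del_1(\tilde{\eta})A\tilde{\eta}$ moved into the forcing does not help either, since $A\tilde{\eta}$ already costs one derivative of $\tilde{\eta}$. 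The paper sidesteps the obstruction by applying (\ref{AB*3}) only on a short time interval $[0,T_0]$, where $te^{-t/q}\le T_0$ and $T_0$ can be taken small (depending on $\|\tilde{\xi}_0\|_{2,\infty}$); the global continuation and the exponential decay then rely solely on (\ref{AB3}), whose constants do not involve $\|\tilde{\xi}_0\|_{2,\infty}$. To salvage your global fixed-point you would first need a uniform-in-time bound on $\|\tilde{\xi}(\cdot,t)\|_{2,\infty}$, which is essentially the paper's continuation argument.
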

\begin{proof}
We first use a contraction mapping argument to prove local existence and uniqueness.  Let $\mathcal{E}$ be the Banach space of $C^1$ functions $\tilde{\zeta}:(0,\infty)\ra\R$ where the norm of $\tilde{\zeta}(\cdot)$ is given by (\ref{I1}) with $m=1$.  For $T,\ve>0$ we denote by $\mathcal{E}_{\ve,T}$ the space of continuous functions $\chi:[0,T]\ra\mathcal{E}$ satisfying $\sup_{0\le t\le T}\|\chi(t)\|_{1,\infty}<\ve$. We define the mapping on $\mathcal{E}_{\ve,T}$  by considering solutions $\tilde{\xi}(t), \ t>0,$ to the non-homogeneous linear evolution equation
\be \label{AU3}
\frac{d\tilde{\xi}(t)}{dt} +[B+\del_1(t)A]\tilde{\xi}(t) -\left\{[dI(\xi_p),B\tilde{\xi}(t)]+\del_2(t)\right\}\frac{A\xi_p}{pI(\xi_p)+[dI(\xi_p), \ A\xi_p]} \ = \ 0 \ ,
\ee
where $\del_1,\del_2:[0,\infty)\ra\R$ are continuous functions with $\|\del_1(\cdot)\|_\infty\le 1/p$. 

 The solution to the initial value problem for (\ref{AU3}) can be represented in terms of the Green's function for the homogeneous equation (\ref{AA3}). 
Let $\del:[0,\infty)\ra\R$ be a continuous function satisfying $\|\del(\cdot)\|_\infty\le 1/p$. We define the Green's function $G(t,s;\del(\cdot))$ for $t\ge s>0$ as the bounded linear operator on the Banach space $\mathcal{E}$ such that for $\tilde{\xi}_s\in\mathcal{E}$, the function $\tilde{\xi}(t)=G(t,s;\del(\cdot))\tilde{\xi}_s, \ t>s,$  is the solution to (\ref{AA3}) with initial condition $\tilde{\xi}(s)=\tilde{\xi}_s$.  Evidently the solution to (\ref{AU3}) with initial condition $\tilde{\xi}(0)=\tilde{\xi}_0\in\mathcal{E}$ has the representation
\be \label{AV3}
\tilde{\xi}(t) \ = \ G(t,0;\del_1(\cdot))\tilde{\xi}_0+\int_0^t \del_2(s)\frac{ G(t,s;\del_1(\cdot))A\xi_p}{pI(\xi_p)+[dI(\xi_p), \ A\xi_p]}  \ ds \ .
\ee

For $\chi\in\mathcal{E}_{\ve,T}$ we define $\del_1(t)=\del_1(\chi(t)), \ \del_2(t)=\del_2(\chi(t)), \ 0\le t\le T$. From (\ref{X3}) we see that if $\ve>0$ is sufficiently small then $\|\del_1(\cdot)\|_\infty\le 1/p$.  Hence we may use the representation (\ref{AV3}) to define the mapping $K\chi:[0,T]\ra\mathcal{E}$ by $K\chi(t)=\tilde{\xi}(t), \ 0\le t\le T$. It follows from Lemma 3.3 inequality (\ref{AB3}) with $m=1$ that there exists $\eta,T_0>0$ such that if $\|\tilde{\xi}_0(\cdot)\|_{1,\infty}<\eta$ and $T\le T_0$ then $K$ is a mapping on $\mathcal{E}_{\ve,T}$. Similarly we see from (\ref{AB*3}) that if $\|\tilde{\xi}_0(\cdot)\|_{2,\infty}<\infty$ then $K$ is a contraction mapping on $\mathcal{E}_{\ve,T}$ for $T_0$ sufficiently small,  if we define the distance function by the uniform norm, $d(\chi,\chi')=\sup_{0\le t\le T}\|\chi(t)-\chi'(t)\|_{1,\infty}$. The contraction mapping theorem then implies existence of a unique solution $\tilde{\xi}(t)\in\mathcal{E}$ to (\ref{T3}) in the interval $0<t\le T_0$. 

We extend the local solution of (\ref{T3}) to all time by obtaining a-priori bounds.  Assume that for some $\eta,T>0$ there is a solution $\tilde{\xi}(t)\in\mathcal{E}$ to (\ref{T3}) for $0<t\le T$ satisfying $\|\tilde{\xi}(t)\|_{1,\infty}<\eta$.  From (\ref{X3}) we have in (\ref{AV3}) that $|\del_1(t)|\le C_2\eta, \ |\del_2(t)|\le C_2\eta\|\tilde{\xi}(t)\|_{1,\infty}$ for $0<t\le T$.  It follows then from (\ref{AB3}), on choosing $\eta$ sufficiently small, that for  some constants $C_3,C_4$, 
\be \label{AW3}
\sup_{0<t\le T}\|\tilde{\xi}(t)\|_{1,\infty} \ \le \  C_3\|\tilde{\xi}_0\|_{1,\infty}+C_4\eta\sup_{0<t\le T} \|\tilde{\xi}(t)\|_{1,\infty} \ .
\ee
Choosing $C_4\eta\le 1/2$, we conclude that  
\be \label{AX3}
\sup_{0<t\le T}\|\tilde{\xi}(t)\|_{1,\infty} \ \le \  2C_3\|\tilde{\xi}_0\|_{1,\infty} \ .
\ee
 We can use (\ref{AX3}) to obtain from (\ref{AV3}) an a-priori bound on $\|\tilde{\xi}(t)\|_{2,\infty}$.  Thus using (\ref{AB3}) we have that
\be \label{AY3}
\sup_{0<t\le T}\|\tilde{\xi}(t)\|_{2,\infty} \ \le \  C_5\|\tilde{\xi}_0\|_{2,\infty}+C_6\|\tilde{\xi}_0\|_{1,\infty}^2 \ .
\ee
Global existence of a unique solution to (\ref{T3}) follows easily from (\ref{AX3}), (\ref{AY3}) by choosing $\|\tilde{\xi}_0\|_{1,\infty}$ sufficiently small so that  the RHS of (\ref{AX3}) is smaller than $\eta$.   To see this we assume that a solution $\tilde{\xi}(t)$ satisfying $\|\tilde{\xi}(t)\|_{1,\infty}<\eta$ exists for $0<t\le T$. From (\ref{AX3}), (\ref{AY3}) there exists $\del_0>0$ depending only on $\|\tilde{\xi}_0\|_{1,\infty}$ and $\|\tilde{\xi}_0\|_{2,\infty}$ such that a solution exists up to time $T+\del_0$ with $\|\tilde{\xi}(t)\|_{1,\infty}<\eta$ for $0<t<T+\del_0$. 

The exponential decay estimate (\ref{AT3})  follows in a similar way from (\ref{AV3}). Using (\ref{AB3}) we see that for $q>p$  there exists $\ve_q>0$ and for $\|\tilde{\xi}_0\|_{1,\infty}<\ve_q$, one has 
\be \label{AZ3}
\sup_{0<t\le T}e^{t/q}\|\tilde{\xi}(t)\|_{1,\infty} \ \le \  C_3\|\tilde{\xi}_0\|_{1,\infty}+\frac{1}{2}\sup_{0<t\le T} e^{t/q}\|\tilde{\xi}(t)\|_{1,\infty} \ .
\ee
Evidently (\ref{AT3}) with $m=1$ follows from (\ref{AZ3}).  The inequality for $m=2$ follows similarly by introducing an exponential factor into (\ref{AY3}). 
\end{proof}

\vspace{.3in}

\section {A Differential Delay Equation}
In this section we shall give an alternative proof of Theorem 3.1 by  obtaining results on the asymptotic behavior of solutions to the differential delay equation (DDE) satisfied by $I(t)=I(\xi(\cdot,t))$, where $\xi(\cdot,t), \ t\ge 0,$ is a solution to (\ref{A1}).  To derive the equation we first observe from (\ref{E1}) that
\be \label{A4}
\exp\left[\int_s^t \rho(s') \ ds'\right]  \ = \ e^{(t-s)/p}\left(\frac{I(t)}{I(s)}\right)^{1/p} \  .
\ee
It follows now from (\ref{C2}), (\ref{D2}) that $\xi(y,t)$ is a function of $I(s), \ 0\le s\le t$. The differential delay equation is therefore given from (\ref{E1}), (\ref{B3}) by 
\be \label{B4}
\frac{1}{p}\frac{d}{dt}\log I(t) \ = \ -\frac{\left[dI(\xi(\cdot,t)), \ B\{\xi(\cdot,t)-\xi_p(\cdot)\}\right]}{pI(\xi(\cdot,t))+\left[dI(\xi(\cdot,t)),  \ A\xi(\cdot,t)\right]} \ .
\ee 
Evidently $I(\cdot)\equiv I_p=I(\xi_p)$ is a solution to (\ref{B4}) in the case when $\xi(\cdot,0)=\xi_p(\cdot)$.  We obtain the linearization of (\ref{B4}) about the constant  $I_p$ when  $\xi(\cdot,0)=\xi_p(\cdot)+\tilde{\xi}(\cdot,0)$  by  writing $I(t)=[p\tilde{I}(t)+1]I_p$, whence
\be \label{C4}
\left(\frac{I(t)}{I(s)}\right)^{1/p} \ \simeq \ 1+[\tilde{I}(t)-\tilde{I}(s)] \ . 
\ee
Letting $y_p(s)=e^{(t-s)/p}y+p\left[e^{(t-s)/p}-1\right], \ s\le t,$ be the solution to (\ref{B2}) in the case $\rho(\cdot)\equiv 1/p$, we have from (\ref{A4}), (\ref{C4})  that the function $y(\cdot)$ of (\ref{C2}) is given to first order in $\tilde{I}(\cdot)$ by 
\be \label{D4}
y(s)-y_p(s)  \simeq \ e^{(t-s)/p}[\tilde{I}(t)-\tilde{I}(s)]y
+\int_s^te^{(s'-s)/p}[\tilde{I}(s')-\tilde{I}(s)]  \ ds' \ . 
\ee
It follows from (\ref{D2}), (\ref{D4}) that if $\xi(\cdot,0)=\xi_p(\cdot)+\tilde{\xi}(\cdot,0)$, then
\begin{multline} \label{E4}
\tilde{\xi}(y,t) \ = \ \xi(y,t)-\xi_p(y) \ \simeq \  e^{-t/p}\tilde{\xi}(y_p(0),0)+[\tilde{I}(0)-\tilde{I}(t)]e^{-t/p}\xi_p(y_p(0))+ \\
\int_0^tds \ h(y_p(s))e^{-(t-s)/p}[\tilde{I}(s)-\tilde{I}(t)] 
+\int_0^tds \ h'(y_p(s))e^{-(t-s)/p}[y(s)-y_p(s)] \ ,
\end{multline}
where $y(s)-y_p(s)$ is given by the RHS of (\ref{D4}). Hence the linearization of (\ref{B4}) about the constant $I_p$ is given by
\be \label{F4}
\frac{d\tilde{I}(t)}{dt} \ = \  -\frac{\left[dI(\xi_p), \ B\tilde{\xi}(t)\right]}{pI(\xi_p))+\left[dI(\xi_p),  \ A\xi_p\right]} \ ,
\ee
where $\tilde{\xi}(\cdot,t)$ is given by (\ref{E4}). 

A linear differential delay equation for $\tilde{I}(t)$ can be derived from the Volterra integral equation (\ref{F3}) by observing that the solution $u(t)=\left[dI(\xi_p), \ B\tilde{\xi}(t)\right]$ of (\ref{F3}) is a constant times  the derivative of $\tilde{I}(t)$.  From (\ref{F3}), (\ref{F4}) we have then that
\be \label{G4}
\frac{d\tilde{I}(t)}{dt} +\int_0^t K(t-s)\frac{d\tilde{I}(s)}{ds} \ ds \ = \  -\frac{g(t)}{pI(\xi_p))+\left[dI(\xi_p),  \ A\xi_p\right]} \  \ .
\ee
Integrating by parts in (\ref{G4}), we conclude that $\tilde{I}(t)$ satisfies the delay equation
\begin{multline} \label{H4}
\frac{d\tilde{I}(t)}{dt} +K(0)\tilde{I}(t)+\int_0^t K'(t-s)\tilde{I}(s) \ ds \\
 = \ K(t)\tilde{I}(0) -\frac{g(t)}{pI(\xi_p))+\left[dI(\xi_p),  \ A\xi_p\right]} \  .
\end{multline}

The differential delay equation for $\tilde{I}(\cdot)$ obtained from (\ref{D4}), (\ref{E4}), (\ref{F4}) is the same as (\ref{H4}) up to terms which decay exponentially at large time. To see this we observe from (\ref{E4}) that
\begin{multline} \label{I4}
B\tilde{\xi}(y,t)  \ \simeq \ Be^{-Bt}\tilde{\xi}(y,0)+ [\tilde{I}(0)-\tilde{I}(t)] \ e^{-Bt}h(y)\\
\frac{1}{p}\int_0^t ds \  h\left(   y_p(s)  \right)e^{-(t-s)/p}[\tilde{I}(s)-\tilde{I}(t)]  \\
+\frac{1}{p}\int_0^t ds \  h'\left(   y_p(s)  \right)e^{-(t-s)/p}[y(s)-y_p(s)]  \\
- \left(1+\frac{y}{p}\right) \int_0^t ds \  h''\left(   y_p(s)    \right) [y(s)-y_p(s)] \ ,
\end{multline}
where $y(s)-y_p(s)$ is the linear function of $\tilde{I}(\cdot)$ given on the RHS of (\ref{D4}).  The coefficient of $-\tilde{I}(t)$ on the RHS of 
(\ref{I4}) is
\begin{multline} \label{J4}
e^{-t/p}h(y_p(0))+\frac{1}{p}\int_0^t ds \  h\left(   y_p(s)  \right)e^{-(t-s)/p} \\
-\frac{y}{p}\int_0^t ds \  h'\left(   y_p(s)  \right) 
+ \left(1+\frac{y}{p}\right)y \int_0^t ds \  h''\left(   y_p(s)    \right) e^{(t-s)/p} \ .
\end{multline}
After doing some integration by parts we see that (\ref{J4}) is the same as
\be \label{K4}
BA\xi_p(y)+y h'(y_p(0))+\frac{p h(y_p(0))}{p+y_p(0)}- \int_{y_p(0)}^\infty dy' \ \frac{ph(y')}{(p+y')^2} \ .
\ee
We similarly see that the coefficient of $\tilde{I}(s), \ 0<s<t$ on the RHS of (\ref{I4}) in the integral over the interval $(0,t)$  is given by
\begin{multline} \label{L4}
e^{-B(t-s)}B^2A\xi_p(y) -\\
e^{-(t-s)/p}\left[ h'(y_p(0))-\frac{ h(y_p(0))}{p+y_p(0)}+ \int_{y_p(0)}^\infty dy' \ \frac{h(y')}{(p+y')^2} \right] \ .
\end{multline}
It follows now from (\ref{G3}), (\ref{K4}), (\ref{L4}) that the differential delay equation obtained from  (\ref{D4}), (\ref{E4}), (\ref{F4})  is the same as (\ref{H4}) modulo exponentially decaying  terms.
\begin{proposition}
Assume that $h(\cdot)$ and $I(\cdot)$ satisfy the conditions of Lemma 3.1. Then the linear DDE defined by  (\ref{D4}), (\ref{E4}), (\ref{F4}) is asymptotically stable in the following sense:  Let the initial data $\tilde{\xi}_0:(0,\infty)\ra\R$  satisfy $\|\tilde{\xi}_0(\cdot)\|_{1,\infty}<\infty$.  Then for any $q>p$  there is a constant $C_q$ depending  on $q$ such that 
\be \label{M4}
\left|\frac{d\tilde{I}(t)}{dt}\right|\le C_q e^{-t/q}\left[|\tilde{I}(0)|+\|\tilde{\xi}_0(\cdot)\|_{1,\infty}\right] \quad  {\rm when \ }  t\ge 0 \ . 
\ee
\end{proposition}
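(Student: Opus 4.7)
The plan is to realize the DDE obtained from (\ref{D4}), (\ref{E4}), (\ref{F4}) as a perturbation of the Volterra-derived equation (\ref{H4}) by exponentially decaying terms, and then apply the resolvent estimates from Section 3 to conclude.

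First, I would insert (\ref{E4}) into $B\tilde{\xi}(y,t)$ appearing in (\ref{F4}), expand using (\ref{I4}), and identify the coefficients of $\tilde{I}(t)$, $\tilde{I}(s)$ for $0<s<t$, and $\tilde{I}(0)$ via the expressions (\ref{J4})--(\ref{L4}) that have already been assembled in the text.  Taking the inner product with $dI(\xi_p)$, this yields an equation for $\tilde{I}$ of the form
\be
\frac{d\tilde{I}(t)}{dt} + K(0)\tilde{I}(t) + \int_0^t K'(t-s)\tilde{I}(s)\, ds \ = \ K(t)\tilde{I}(0) - \frac{g(t)}{c} + R(t) \ ,
\ee
where $c = pI(\xi_p) + [dI(\xi_p), A\xi_p]$, and the remainder $R(t)$, identified with the non-$BA\xi_p$ contributions in (\ref{K4}), (\ref{L4}), splits as $R(t) = r_0(t)\tilde{I}(0) + r_1(t) + \int_0^t \tilde{r}(t-s)\tilde{I}(s)\, ds$ with $|r_0(t)|,|\tilde{r}(t)|\le Ce^{-t/p}$ and $|r_1(t)|\le Ce^{-t/p}\|\tilde{\xi}_0(\cdot)\|_{1,\infty}$.

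Next, setting $u(t)=-c\,d\tilde{I}/dt$ and applying $\tilde{I}(s)=\tilde{I}(0)-c^{-1}\int_0^s u(s')\,ds'$ together with an integration by parts that reverses the step leading from (\ref{G4}) to (\ref{H4}), the above equation converts into a perturbed Volterra equation $u + K_{\mathrm{eff}}*u = g_{\mathrm{eff}}$, with effective kernel $K_{\mathrm{eff}}(t)=K(t)+O(e^{-t/p})$ and effective forcing satisfying $|g_{\mathrm{eff}}(t)|\le Ce^{-t/p}[\,|\tilde{I}(0)|+\|\tilde{\xi}_0(\cdot)\|_{1,\infty}\,]$.  By Lemma 3.1, $t\mapsto e^{t/p}K(t)$ is positive and decreasing, so (\ref{H3}) holds and Corollary 4.2 of Chapter II of \cite{grip} yields a resolvent for $K$ with $e^{t/q}r(t)\in L^1(\R^+)$ for every $q>p$. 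A perturbation argument analogous to the one around (\ref{AL3}) transfers this property to $K_{\mathrm{eff}}$, giving $|u(t)|\le C_q e^{-t/q}[\,|\tilde{I}(0)|+\|\tilde{\xi}_0(\cdot)\|_{1,\infty}\,]$, and (\ref{M4}) follows since $d\tilde{I}/dt=-u/c$.

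The main obstacle is the bookkeeping in the first step: the expansion (\ref{I4}) produces many terms, and one must carefully verify that every contribution beyond those appearing in the clean equation (\ref{H4}) can be absorbed into the remainder $R(t)$ with the $e^{-t/p}$ bounds required for the perturbation step (this uses $h(y_p(0))\to h_\infty$ and $y_p(0)\sim e^{t/p}y$ together with the fact that $dI(\xi_p)$ is supported in $[\ve_0,\infty)$). Once the decomposition is in hand, the rest of the argument is essentially a repetition of the resolvent-based analysis already carried out in Proposition 3.1 and Lemma 3.3.
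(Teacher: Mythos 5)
Your decomposition of the DDE into (\ref{H4}) plus exponentially decaying remainders matches the paper's equations (\ref{N4})--(\ref{Q4}), and your final reduction to the Volterra equation (\ref{F3}) for $u=d\tilde I/dt$ is also what the paper does. But there is a genuine gap in the step where you invoke ``a perturbation argument analogous to the one around (\ref{AL3}) transfers this property to $K_{\mathrm{eff}}$.'' After substituting $\tilde I(s)=\tilde I(0)-c^{-1}\int_0^s u$ into the remainder terms and interchanging integrals, the resulting kernel perturbation $\delta K(t,s')$ is non-convolution and, by (\ref{Q4}), bounded only by $Ce^{-t/p}$, so that $\sup_{t>0}\int_0^t e^{(t-s')/q}|\delta K(t,s')|\,ds'$ is finite but of order one (it is maximized at moderate $t$). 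The argument around (\ref{AL3}) relies on the analogous supremum being \emph{small}, achieved in Lemma 3.3 by taking $\|\del(\cdot)\|_\infty<\ve_q$; there is no comparable smallness parameter available here, so the resolvent estimate does not transfer by that route.

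The paper avoids this by inserting an extra step that you omit: it first proves the a priori bound (\ref{R4}), $\sup_{t>0}|\tilde I(t)|\le C\bigl[|\tilde I(0)|+\|\tilde\xi_0\|_{1,\infty}\bigr]$, by an iteration argument on the integrated form (\ref{S4}) of (\ref{N4}), comparing $\tilde I(T)$ against $\tilde I(T-1)$ and exploiting that $M(t)\to K(0)>0$ and that the remainder kernel decays (see (\ref{T4})--(\ref{V4})). Once $\tilde I$ is known to be uniformly bounded, the terms $[M(t)-K(0)]\tilde I(t)$ and $\int_0^t[m(t,s)+K'(t-s)]\tilde I(s)\,ds$ can be moved entirely to the right-hand side and estimated by $Ce^{-t/p}[|\tilde I(0)|+\|\tilde\xi_0\|_{1,\infty}]$; this leaves the \emph{unperturbed} translation-invariant Volterra equation $u+K*u=\tilde g$ with exponentially decaying $\tilde g$, to which Theorem 3.5 of Chapter~II of \cite{grip} and the resolvent bound from Lemma 3.1 apply directly. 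To repair your proposal you either need to supply this boundedness step, or replace the global smallness argument by a large-time splitting (restart the Volterra equation at a time $T_0$ after which the perturbation is small, treating the contribution from $[0,T_0]$ as data), as is in fact done in the nonlinear version, Theorem 4.1.
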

\begin{proof}
From (\ref{I4}) we may write (\ref{F4}) as 
\be \label{N4}
\frac{d\tilde{I}(t)}{dt} +M(t)\tilde{I}(t)-\int_0^t m(t,s)\tilde{I}(s) \ ds  \ = \ g(t) \ .
\ee
The function $g(\cdot)$ is given by the formula
\be \label{O4}
g(t) \ = \ -\frac{\left[dI(\xi_p), \ Be^{-Bt}\tilde{\xi}(0)+\tilde{I}(0)e^{-Bt}h\right]}{pI(\xi_p))+\left[dI(\xi_p),  \ A\xi_p\right]} \ .
\ee
It follows from the assumptions of Lemma 3.1 that there is a constant $C$ such that
\be \label{P4}
|g(t)| \ \le \ Ce^{-t/p}\left[|\tilde{I}(0)|+\|\tilde{\xi}_0(\cdot)\|_{1,\infty}\right]  \ , \quad t\ge 0.
\ee
Similarly we see from (\ref{K4}), (\ref{L4}) that if $K(\cdot)$ is the function (\ref{G3}) then
\be \label{Q4}
|M(t)-K(0)| \ \le \  Ce^{-t/p} \ , \quad |m(t,s)+K'(t-s)| \ \le \ Ce^{-(t-s)/p-t/p} \ ,\\ 0\le s\le t<\infty \ ,
\ee
for some constant $C$.

We show under  the assumptions (\ref{P4}), (\ref{Q4}) there is a constant $C$ such that the solution $\tilde{I}(t)$ to (\ref{N4}) satisfies
\be \label{R4}
\sup_{t>0}|\tilde{I}(t)| \ \le \ C\left[|\tilde{I}(0)|+\|\tilde{\xi}_0(\cdot)\|_{1,\infty}\right]  \ .
\ee
To see this we first observe that for any $0\le T_1<T_2$ one has on integrating (\ref{N4}) the representation
\begin{multline} \label{S4}
\tilde{I}(T_2) \ = \ \exp\left[-\int_{T_1}^{T_2}M(s) \ ds\right]\tilde{I}(T_1)+\int_{T_1}^{T_2}dt \ \exp\left[-\int_t^{T_2}M(s) \ ds\right] g(t)\\
+\int_{T_1}^{T_2}dt \ \exp\left[-\int_t^{T_2}M(s) \ ds\right]\int_0^t m(t,s)\tilde{I}(s) \ ds  \ .
\end{multline}
We assume that $T>1$ and that $\tilde{I}(T)=\sup_{0<t<T}|\tilde{I}(t)|>0$. Setting $T_1=T-1, \ T_2=T$ in (\ref{S4}) and using (\ref{Q4}), we see that for some constant $C_1$  the RHS of (\ref{S4})  is bounded above by
\be \label{T4}
\exp[-K(0)]\tilde{I}(T-1)+\left\{1-\exp[-K(0)]+C_1e^{-T/p}\right\}\tilde{I}(T)+C_1\int_{T-1}^T |g(t)| \ dt \ .
\ee
We conclude from (\ref{S4}), (\ref{T4}) that
\be \label{U4}
\sup_{0<t<T-1}|\tilde{I}(t)| \ \ge \ \left[1-C_2e^{-T/p}\right]\sup_{0<t<T}|\tilde{I}(t)|-C_2\int_{T-1}^T |g(t)| \ dt \ ,
\ee
for some constant $C_2$. Since we can make a similar argument in the case when $\tilde{I}(T)=-\sup_{0<t<T}|\tilde{I}(t)|<0$, we see that (\ref{U4}) holds provided
$|\tilde{I}(T)|=\sup_{0<t<T}|\tilde{I}(t)|$.  It follows upon iterating the inequality (\ref{U4}) that there exist constants $T_0,C_3>0$ and
\be \label{V4}
\sup_{t>0}|\tilde{I}(t)| \ \le \  C_3\left[\sup_{0<t<T_0}|\tilde{I}(t)|+\int_0^\infty|g(t)| \ dt\right] \ .
\ee
Since it is easy to show that $\sup_{0<t<T_0}|\tilde{I}(t)|$ is bounded by the RHS of (\ref{R4}), we conclude from (\ref{P4}), (\ref{V4})  that (\ref{R4}) holds.

The inequality (\ref{M4}) easily follows from (\ref{R4}) and Theorem 3.5 of Chapter II of \cite{grip} for the Volterra equation (\ref{G3}). Setting  $u(t)=d\tilde{I}(t)/dt, \ t>0,$ we integrate by parts as in (\ref{G4}), (\ref{H4}). We have then from (\ref{P4}), (\ref{Q4}), (\ref{R4}) that $u(\cdot)$ is the solution to (\ref{G3}), with $g(\cdot)$ on the RHS of (\ref{G3}) satisfying (\ref{P4}). The result follows.
\end{proof}
We consider next the nonlinear DDE (\ref{B4}). It follows  from (\ref{C2}), (\ref{D2})  that
\begin{multline} \label{W4}
B\xi(y,t) \ = \ \frac{1}{p}\exp\left[-\int_0^t \rho(s) \ ds\right]\xi(y(0),0)- \left(1+\frac{y}{p}\right)D\xi(y(0),0) + \\
\frac{1}{p}\int_0^t ds \  h(y(s))\exp\left[-\int_s^t \rho(s') \ ds'\right]  
- \left(1+\frac{y}{p}\right) \int_0^t ds \  h'(y(s)) \ , 
\end{multline}
where $\rho(\cdot)$ is determined from (\ref{A4}).  We  define the function $v_t(s)=\{I(s)/I(t)\}^{1/p}, 0<s\le t$. Then from (\ref{C2}), (\ref{A4})  we have that
\be \label{X4}
y(s) \ = \ \frac{z(s)}{v_t(s)} \ , \quad \ {\rm where \ } z(s) \ = \ e^{(t-s)/p}y+\int_s^tds' \ e^{(s'-s)/p} \  v_t(s') \ .
\ee
We define   a function $F(t,y,v_t(\cdot))$ by
\be \label{Y4}
F(t,y,v_t(\cdot)) \ = \ \frac{1}{p}\int_0^t ds \  h\left(   \frac{z(s)}{v_t(s)}  \right) e^{-(t-s)/p}v_t(s)
- \left(1+\frac{y}{p}\right) \int_0^t ds \  h'\left(    \frac{z(s)}{v_t(s)}  \right)  \ ,
\ee
so $B\xi(y,t)$ is the sum of terms depending on the initial data plus $F(t,y,v_t(\cdot))$. When $v_t(\cdot)\equiv 1$ then $z(\cdot)=y_p(\cdot)$ and so 
\be \label{Z4}
F(t,y,1(\cdot)) \ = \ h(y)-e^{-t/p}h(y_p(0)) \ = \ B\xi_p(y)-e^{-t/p}h(y_p(0)) \  .
\ee
We conclude from (\ref{W4})-(\ref{Z4}) that
\be \label{AA4}
B\xi(y,t)-B\xi_p(y) \ = \ F(t,y,v_t(\cdot))-F(t,y,1(\cdot)) + G(t,y,v_t(\cdot)) \ ,
\ee
where the function $G$ is given by
\be \label{AA*4}
G(t,y,v_t(\cdot)) \ = \  \frac{1}{p}e^{-t/p}v_t(0)\xi\left( \frac{z(0)}{v_t(0)},0\right)- \left(1+\frac{y}{p}\right)D\xi\left(\frac{z(0)}{v_t(0)},0\right)-e^{-t/p}h(y_p(0)) \ .
\ee
Observe that if $v_t(\cdot)$ is close to $1(\cdot)$ then $|G(t,y,v_t(\cdot))|$ is bounded by a constant times $e^{-t/p}$. 

The linearization (\ref{D4})-(\ref{F4}) of (\ref{B4}) can be obtained by computing the gradient $dF(t,y,v_t(\cdot);\cdot)$ of $F(t,y,v_t(\cdot))$ with respect to $v_t(\cdot)$  at $v_t(\cdot)\equiv1$.  To find the gradient $d\Ga(v_t(\cdot);\cdot)$ of a functional $\Ga(v_t(\cdot))$ we compute the directional derivative
\be \label{AB4}
\int_0^td\Ga(v_t(\cdot);\tau)\phi(\tau) \ d\tau \ = \ [d\Ga(v_t(\cdot)),\phi] \ = \ \lim_{\ve\ra 0}\frac{\Ga(v_t(\cdot)+\ve \phi(\cdot))-\Ga(v_t(\cdot))}{\ve} \ .
\ee
It follows from (\ref{X4}), (\ref{AB4}) that
\be \label{AC4}
dz(s)(v_t(\cdot);\tau) \ = \ e^{(\tau-s)/p}H(\tau-s)\  , \quad {\rm where \ } H(\cdot) \ {\rm is \ the \  Heaviside \ function.}
\ee
Similarly we have that 
\begin{multline} \label{AD4}
dF(t,y,v_t(\cdot);\tau) \ = \ \frac{1}{p}  h\left(   \frac{z(\tau)}{v_t(\tau)}  \right)e^{-(t-\tau)/p}- \frac{z(\tau)}{pv_t(\tau)}  h'\left(   \frac{z(\tau)}{v_t(\tau)}  \right)e^{-(t-\tau)/p}\\
+ \frac{1}{p}\int_0^t ds \  h'\left(   \frac{z(s)}{v_t(s)}  \right) e^{-(t-s)/p}dz(s)(v_t(\cdot);\tau) + \\
\left(1+\frac{y}{p}\right) \frac{z(\tau)}{v_t(\tau)^2} h''\left(   \frac{z(\tau)}{v_t(\tau)}  \right) 
-\left(1+\frac{y}{p}\right)\int_0^t ds \  \frac{1}{v_t(s)}h''\left(   \frac{z(s)}{v_t(s)}  \right) dz(s)(v_t(\cdot);\tau) \  .
\end{multline}
We conclude from (\ref{AC4}), (\ref{AD4}) that
\begin{multline} \label{AE4}
dF(t,y,v_t(\cdot);\tau) \ = \ \frac{1}{p}  h\left(   \frac{z(\tau)}{v_t(\tau)}  \right)e^{-(t-\tau)/p}- \frac{z(\tau)}{pv_t(\tau)}  h'\left(   \frac{z(\tau)}{v_t(\tau)}  \right)e^{-(t-\tau)/p}\\
+ \frac{e^{(\tau-t)/p}}{p}\int_0^\tau ds \  h'\left(   \frac{z(s)}{v_t(s)}  \right)  + \\
\left(1+\frac{y}{p}\right) \frac{z(\tau)}{v_t(\tau)^2} h''\left(   \frac{z(\tau)}{v_t(\tau)}  \right) 
-\left(1+\frac{y}{p}\right)\int_0^\tau ds \  \frac{1}{v_t(s)}h''\left(   \frac{z(s)}{v_t(s)}  \right)  e^{(\tau-s)/p} \  .
\end{multline}
Setting $v_t(\cdot)\equiv 1$ in (\ref{AE4}),  we have that
\begin{multline} \label{AF4}
dF(t,y,1(\cdot);\tau) \ = \ \frac{1}{p}  h\left(  y_p(\tau)  \right)e^{-(t-\tau)/p}- \frac{y_p(\tau)}{p}  h'\left(  y_p(\tau) \right)e^{-(t-\tau)/p}\\
+ \frac{e^{(\tau-t)/p}}{p}\int_0^\tau ds \  h'\left(   y_p(s)  \right)  + \\
\left(1+\frac{y}{p}\right)y_p(\tau) h''\left(   y_p(\tau)  \right)  \ 
-\left(1+\frac{y}{p}\right)\int_0^\tau ds \  h''\left(   y_p(s)  \right)  e^{(\tau-s)/p} \  .
\end{multline}
By doing some integration by parts in the RHS of (\ref{AF4}) we see that $dF(t,y,1(\cdot);s), \  0<s<t$, is the same as (\ref{L4}). 

From (\ref{C2}), (\ref{D2}), (\ref{AA4}) we may rewrite the DDE equation (\ref{B4}) as
\be \label{AH4}
\frac{1}{p}\frac{d}{dt}\log I(t) + f(t,v_t(\cdot)) \ = \ g(t,v_t(\cdot)) \ ,
\ee
where the functions $f,g$ are given by the formulae
\begin{eqnarray} \label{AI4}
f(t,v_t(\cdot)) \ &=& \ \frac{\left[dI(\xi(\cdot,t)), \ F(t,\cdot,v_t(\cdot))-F(t,\cdot,1(\cdot))\right]}{pI(\xi(\cdot,t))+\left[dI(\xi(\cdot,t)),  \ A\xi(\cdot,t)\right]} \ , \\
g(t,v_t(\cdot)) \ &=& \ -\frac{\left[dI(\xi(\cdot,t)), G(t,\cdot,v_t(\cdot)) \ \right]}{pI(\xi(\cdot,t))+\left[dI(\xi(\cdot,t)),  \ A\xi(\cdot,t)\right]} \ . \nonumber
\end{eqnarray}
In the following we give conditions on $I(\cdot), \ h(\cdot)$ and the initial data for (\ref{A1}) which imply a unique solution to the DDE (\ref{AH4}), (\ref{AI4}). 
\begin{lem} Assume that the function $h:(0,\infty)\ra\R$ of (\ref{B1}) is $C^2$ and $\sup_{y>0}\{y|h'(y)|+y^2|h''(y)|\}\}<\infty$. Assume  further that  $I(\cdot)$ satisfies properties (a) and (b) of the introduction. 

Let $G$ in (\ref{AI4}) be given by (\ref{AA*4}), where $\xi(\cdot,0)$ is a non-negative function satisfying $\|\xi(\cdot,0)\|_{2,\infty}<\infty$ and $pI(\xi(\cdot,0))+\left[dI(\xi(\cdot,0)),  \ A\xi(\cdot,0)\right]>0$. Then the initial value problem for (\ref{AH4}), (\ref{AI4}) with given $I(0)>0$ has a unique positive $C^1$ solution in some  interval $0<t<\del, \  \del>0$.

Suppose the positive $C^1$ solution to (\ref{AH4}), (\ref{AI4}) exists up to time $T$ and that there are constants $M,m>0$ with the property
\be \label{AJ4}
pI(\xi(\cdot,t))+\left[dI(\xi(\cdot,t)),  \ A\xi(\cdot,t)\right]\ge m, \quad  m\le v_T(t)\le M \ , \quad {\rm for \ } 0\le t\le T.
\ee
Then there exists $\del>0$, depending only on $M,m$ and $\|\xi(\cdot,0)\|_{2,\infty}$, such that the solution can be extended to a $C^1$ positive solution on the interval $[0,T+\del]$. 
\end{lem}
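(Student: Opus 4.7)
The plan is to establish local existence by a contraction mapping on $I(\cdot)$ directly, and to obtain the extension bound by re-running the local argument with a-priori data uniform in the bounds of (\ref{AJ4}). For local existence, fix $I_0=I(0)$ and introduce the metric space $\mathcal{J}_{\eta,\delta}$ of continuous positive functions $I:[0,\delta]\ra\R^+$ satisfying $I(0)=I_0$ and $\sup_{0\le t\le\delta}|I(t)/I_0-1|<\eta$, equipped with the supremum norm. Given such an $I(\cdot)$, define $v_t(s)=(I(s)/I(t))^{1/p}$; from (\ref{A4}) this determines a continuous function $\rho(s)$, which together with (\ref{C2}), (\ref{D2}) yields $\xi(\cdot,t)$ for $t\in[0,\delta]$. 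Form $f(t,v_t(\cdot))$ and $g(t,v_t(\cdot))$ from (\ref{AI4}) and define
\[
KI(t) \;=\; I_0\exp\!\left[p\int_0^t\bigl\{g(s,v_s(\cdot))-f(s,v_s(\cdot))\bigr\}\,ds\right].
\]
Fixed points of $K$ correspond to positive $C^1$ solutions of (\ref{AH4}).

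The contraction estimates rest on three points. First, when $I$ is close to $I_0$ on $[0,\delta]$, the function $\rho$ is close to $1/p$ in $L^\infty$, so (\ref{D2}) combined with $\sup_{y>0}\{y|h'(y)|+y^2|h''(y)|\}<\infty$ gives $\|\xi(\cdot,t)-\xi(\cdot,0)\|_{2,\infty}=O(\delta)$ for $t\in[0,\delta]$, exactly as in (\ref{H2}), (\ref{J2}). Second, property (a) restricts all inner products appearing in (\ref{AI4}) to $y\ge\ve_0$, and property (b) provides the Lipschitz bound (\ref{J1}) on $dI$; together these bound the variation of the numerators and denominators in (\ref{AI4}) by $O(\delta)$. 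The hypothesis that $pI(\xi(\cdot,0))+[dI(\xi(\cdot,0)),A\xi(\cdot,0)]>0$ then propagates to $t\in[0,\delta]$ for $\delta$ sufficiently small, keeping $f,g$ well defined. Third, applying the same estimates to two inputs $I_1,I_2\in\mathcal{J}_{\eta,\delta}$ gives $\|KI_1-KI_2\|_\infty\le C\delta\|I_1-I_2\|_\infty$, so $K$ contracts; positivity and $C^1$ regularity of the fixed point are built into the exponential form of $K$.

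For the extension clause, suppose the positive $C^1$ solution exists on $[0,T]$ and satisfies (\ref{AJ4}). The bounds $m\le v_T(t)\le M$ control the integrated factor $\exp[-\int_s^T\rho(s')ds']$ for $0\le s\le T$ in terms of $m,M$ alone, whence (\ref{D2}) together with $\|\xi(\cdot,0)\|_{2,\infty}<\infty$ and the hypotheses on $h$ yields an a-priori bound on $\|\xi(\cdot,T)\|_{2,\infty}$ depending only on $\|\xi(\cdot,0)\|_{2,\infty}$, $M$ and $m$. Combined with $pI(\xi(\cdot,T))+[dI(\xi(\cdot,T)),A\xi(\cdot,T)]\ge m$, these are precisely the hypotheses needed to re-run the local argument starting at time $T$; inspection of the contraction estimates shows that the length of the extension interval depends only on $M,m,\|\xi(\cdot,0)\|_{2,\infty}$. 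The main obstacle is the history-dependence of $f$ and $g$ through $v_T(\cdot)$ on the whole interval $[0,T]$: even after restarting at $T$, the characteristic (\ref{C2}) still reaches back to $y(0)$ so the restart is not genuinely autonomous, and it is precisely the uniform bounds in (\ref{AJ4}) that decouple the extension length from $T$ and from the fine structure of $I$ on $[0,T]$.
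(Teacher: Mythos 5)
Your proposal is essentially the same argument as the paper's: a contraction mapping on the exponential-integral form of (\ref{AH4}), using the boundedness and Lipschitz estimates for $f,g$ derived from (\ref{AK4})--(\ref{AS4}), together with the fact that the lower bound on the denominator propagates a short time forward. The paper frames the contraction on the normalized ratio $\chi(\tau)=[I(T)/I(T+\tau)]^{1/p}$ with the history encoded explicitly by (\ref{AU4}), while you work with $I(\cdot)$ directly and treat local existence and extension separately, but these are equivalent up to reparametrization, and you correctly identify that the uniform bounds in (\ref{AJ4}) on $v_T(\cdot)$ and the denominator are what make the extension length $\del$ independent of $T$.
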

\begin{proof}
We assume that the positive $C^1$ solution to  (\ref{AH4}), (\ref{AI4}) exists up to time $T\ge 0$ and that (\ref{AJ4}) holds. We use the standard contraction mapping argument to extend the solution to the interval $[T,T+\del]$. Hence we need to establish boundedness of the functions $f(t,v_t(\cdot)), g(t,v_t(\cdot))$, and  also Lipschitz continuity  in $v_t(\cdot)$. 
   
For $r=1,2,$ we define a norm $\|h(\cdot)\|_{r,\infty}^*$ for  $h(\cdot)$, similar to the norm $\|\cdot\|_{r,\infty}$ of (\ref{I1}), as follows:
\be \label{AK*4}
\|h(\cdot)\|_{r,\infty}^* \ = \ \int_0^1|h(y)| \ dy+\sup_{y>1}|h(y)|+ \sup_{0<y<\infty}\sum_{k=1}^ry^k\left|\frac{d^kh(y)}{dy^k}\right| \ .
\ee
We see from (\ref{C2}), (\ref{D2}), (\ref{A4}) that there is a constant $C_1$ depending only on $m,M$ such that  for $r=1,2,$
\be \label{AK4}
\|\xi(\cdot,t)\|_{r,\infty}\le C_1(m,M)\left[e^{-t/p}\|\xi(\cdot,0)\|_{r,\infty}+\|h\|^*_{r,\infty}\right], \quad  0\le t\le T \ .
\ee
The uniform Lipschitz continuity of $dI$ implies there is a constant $\ga>0$ such that
\be \label{AL4}
\|dI(\zeta_1(\cdot))-dI(\zeta_2(\cdot))\|_{L^1(\R^+)} \ \le \   \ga\|\zeta_1(\cdot)-\zeta_2(\cdot)\|_{1,\infty} \ ,
\ee
for any nonnegative functions $\zeta_1,\zeta_2:(0,\infty)\ra\R^+$. From (\ref{AK4}) with $r=1$ and (\ref{AL4}) we conclude that
\begin{multline} \label{AM4}
\|dI(\xi(\cdot,t))\|_{L^1(\R^+} \ \le \ \|dI(0(\cdot))\|_{L^1(\R^+}  \\
+\ga C_1(m,M)\left[e^{-t/p}\|\xi(\cdot,0)\|_{1,\infty}+\|h\|^*_{1,\infty}\right], \quad  0\le t\le T \ .
\end{multline}
From (\ref{AA*4}) we see there is a constant $C_2$ depending only on $m,M$ such that
\be \label{AN4}
|G(t,y,v_t(\cdot))|\le C_2(m,M)e^{-t/p}\left[\|\xi(\cdot,0)\|_{1,\infty}+\sup_{y>\ve_0}|h(y)|\right] \ , \quad 0<t<T, \ y\ge \ve_0 \ .
\ee
It is easy to see from (\ref{Y4}) there is a constant $C_3$ depending only on $m,M$ such that
\be \label{AO4}
|F(t,y,v_t(\cdot))|\le C_3(m,M)\|h\|^*_{1,\infty} \ , \quad 0<t<T, \ y\ge \ve_0 \ .
\ee
From (\ref{AJ4}), (\ref{AM4})-(\ref{AO4}) we conclude that the functions $f(t,v_t(\cdot)), \ g(t,v_t(\cdot))$ are bounded on $0<t<T$ by a constant which depends only on $m,M$ and $\|\xi(\cdot,0)\|_{1,\infty}$. 

To prove Lipschitz continuity we first observe from (\ref{AE4})  there is a constant $C_4$ depending only on $m,M$ such that 
\be \label{AP4}
|dF(t,y,v_t(\cdot);\tau)| \ \le \ C_4(m,M)e^{-(t-\tau)/p}\|h(\cdot)\|^*_{2,\infty}  \ , \quad 0<\tau<t<T, \ y\ge \ve_0 \ .
\ee
Let $v_t^1,v_t^2:[0,t]\ra\R^+$ be two continuous functions satisfying $m\le v_t^1(s),v_t^2(s)\le M, \ 0\le s\le t$. From (\ref{AA*4}) we see there is a constant $C_5$ depending only on $m,M$ such that
\be \label{AQ4}
|G(t,y,v^1_t(\cdot))-G(t,y,v^2_t(\cdot))|\le C_5(m,M)e^{-t/p}\|\xi(\cdot,0)\|_{2,\infty}\|v_t^1(\cdot)-v^2_t(\cdot)\|_\infty \ , \ y\ge \ve_0 \ .
\ee
Evidently from (\ref{C2}), (\ref{D2}), (\ref{A4}), (\ref{X4}) we can consider $\xi(y,t)$ as a function of $t,y,v_t(\cdot)$. One easily sees that
\begin{multline} \label{AR4}
|\xi(t,y,v^1_t(\cdot))-\xi(t,y,v^2_t(\cdot))|\le \\
C_6(m,M)\left[e^{-t/p}\|\xi(\cdot,0)\|_{1,\infty}+\|h\|^*_{1,\infty}\right]\|v_t^1(\cdot)-v^2_t(\cdot)\|_\infty \ , \ y\ge \ve_0 \ ,
\end{multline}
for some constant $C_6$ depending only on $m,M$.  Similarly we have that
\begin{multline} \label{AS4}
|yD\xi(t,y,v^1_t(\cdot))-yD\xi(t,y,v^2_t(\cdot))|\le \\
C_7(m,M)\left[e^{-t/p}\|\xi(\cdot,0)\|_{2,\infty}+\|h\|^*_{2,\infty}\right]\|v_t^1(\cdot)-v^2_t(\cdot)\|_\infty \ , \ y\ge \ve_0 \ ,
\end{multline}
for some constant $C_7$ depending only on $m,M$. The inequalities (\ref{AL4}) and (\ref{AP4})-(\ref{AS4}) imply the Lipschitz continuity of the functions  $f(t,v_t(\cdot)), \ g(t,v_t(\cdot))$ in $v_t(\cdot)$.

Suppose now  a solution of (\ref{AH4}), (\ref{AI4}) exists up to time $T\ge 0$ and satisfies (\ref{AJ4}) for some $m,M>0$.   For $0<\del,\ve<1/2$ let $\mathcal{E}_{\ve,\del}$ be the space of continuous functions  $\chi:[0,\del]\ra\R^+$ such that $\chi(0)=1$ and $1-\ve\le \inf\chi(\cdot)\le \sup\chi(\cdot)\le 1+\ve$.  We define a mapping $K$ on functions in $\mathcal{E}_{\ve,\del}$ by 
\be \label{AT4}
K\chi(\tau) \ = \ \exp\left[\int_T^{T+\tau} f(t,v_t(\cdot)) \ dt -\int_T^{T+\tau} g(t,v_t(\cdot)) \ dt \right]  \ , \quad 0<\tau\le\del \ ,
\ee
where the function $v_t(\cdot)$ is defined for any $T<t<T+\del$ by
\begin{eqnarray} \label{AU4}
v_t(s) \ &=& \ v_T(s)\chi(t-T) \quad {\rm for \ }0<s<T,  \\
v_t(s) \ &=& \ \chi(t-T)/\chi(s-T) \quad {\rm for \ } T<s<t  .  \nonumber
\end{eqnarray}
Evidently if $I(t), \ 0<t<T+\del$, is a solution to (\ref{AH4}), (\ref{AI4}) then $\chi(\tau)=[I(T)/I(T+\tau)]^{1/p}, \ 0<\tau<\del,$ is a fixed point of the mapping $K$, so $\chi=K\chi$.

For $\chi\in\mathcal{E}_{\ve,\del}$ and $T<t<T+\del$, we define $v_{t}(\cdot)$ by (\ref{AU4}). Since $m\le \inf v_T(\cdot)\le\sup v_T(\cdot)\le M$, we have that
\be \label{AV4}
m(1-\ve) \ \le \  \inf v_t(\cdot) \ \le \ \sup v_t(\cdot) \ \le \  M(1+\ve), \quad T<t<T+\del,
\ee
provided $\ve<\min[1-1/M, \ 1/m-1]$.  We also have from (\ref{C2}), (\ref{D2}), (\ref{A4}) that for some universal constant $C$,
\be \label{AW4}
|\xi(y,t)-\xi(y,T)|  \ \le  C\del\left[ \ ||\xi(\cdot,T)\|_{1,\infty}+\|h(\cdot)\|^*_{1,\infty}\right] \ , \quad y\ge \ve_0, \  T<t<T+\del.
\ee
Similarly we have that
\be \label{AX4}
|yD\xi(y,t)-yD\xi(y,T)|  \ \le  C\del\left[ \ ||\xi(\cdot,T)\|_{2,\infty}+\|h(\cdot)\|^*_{1,\infty}\right] \ , \quad y\ge \ve_0, \  T<t<T+\del,
\ee
for some universal constant $C$.
It follows from (\ref{AJ4}), (\ref{AK4}), (\ref{AL4}), (\ref{AW4}), (\ref{AX4}) that $\del>0$ can be chosen sufficiently small, depending only on $m,\|\xi(\cdot,0)\|_{2,\infty}$, with the property
\be \label{AY4}
pI(\xi(\cdot,t))+\left[dI(\xi(\cdot,t)),  \ A\xi(\cdot,t)\right]\ge m/2, \quad T<t<T+\del.
\ee
From (\ref{AN4}), (\ref{AO4}), (\ref{AU4}), (\ref{AV4}), (\ref{AY4})  we see that $\del>0$ can be chosen sufficiently small, depending only on $m,M,\|\xi(\cdot,0)\|_{2,\infty}$, such that $K$ maps $\mathcal{E}_{\ve,\del}$ into itself.  We similarly see from (\ref{AP4})-(\ref{AS4}), that for sufficiently small $\del>0$, with the same dependence, the mapping is a contraction with respect to the metric induced by the uniform norm. Hence by the contraction mapping theorem the solution to (\ref{AH4}) on the interval $0<t<T$ can be extended to the interval $T<t<T+\del$. 
\end{proof}
Next we generalize Proposition 4.1 to the non-linear DDE.
\begin{theorem} Assume that $h(\cdot)$ and $I(\cdot)$ satisfy the conditions of Lemma 3.1, Lemma 4.1, and also that $\|\xi(\cdot,0)\|_{2,\infty}<\infty$.  Then there exists $\ve>0$ such that if $\|\xi(\cdot,0)-\xi_p(\cdot)\|_{1,\infty}<\ve$,  the nonlinear DDE (\ref{AH4}), (\ref{AI4}) with given initial data $I(0)>0$ has a unique positive $C^1$ solution $I(t), \ t\ge 0,$ globally in time. For any $q>p$ there exists $C_q,\ve_q>0$ such that if $\ve<\ve_q$ then
\be \label{AZ4}
\left|\frac{d}{dt}\log I(t)\right| \ \le \ C_qe^{-t/q}\|\xi(\cdot,0)-\xi_p(\cdot)\|_{1,\infty} \ .
\ee
\end{theorem}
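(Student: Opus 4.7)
The plan is to treat the nonlinear DDE (\ref{AH4}), (\ref{AI4}) as a perturbation of the linearized DDE analyzed in Proposition 4.1, running a contraction mapping argument in a weighted Banach space that enforces the decay rate $e^{-t/q}$ for $q > p$. Local existence on some interval $[0,\delta]$ is already supplied by Lemma 4.1; the task is to extend the solution to $[0,\infty)$ and simultaneously obtain the bound (\ref{AZ4}).

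\textbf{Setup.} Let $I_p = I(\xi_p)$, set $\tilde{I}(t) = [I(t) - I_p]/(pI_p)$, so $v_t(s) = [I(s)/I(t)]^{1/p}$ satisfies $v_t(s) - 1 = \tilde{I}(s) - \tilde{I}(t) + Q(\tilde{I}(s),\tilde{I}(t))$, with $Q$ smooth and vanishing to second order at $0$. Using the gradient $dF(t,y,1(\cdot);\cdot)$ from (\ref{AF4}), decompose
\be
f(t, v_t(\cdot)) = L_1(t, \tilde{I}(\cdot)) + N_f(t, \tilde{I}(\cdot)), \qquad g(t, v_t(\cdot)) = L_2(t, \tilde{I}(\cdot)) + N_g(t, \tilde{I}(\cdot)),
\ee
where $L_1,L_2$ are linear in $\tilde{I}$ and, after the integration by parts leading from (\ref{G4}) to (\ref{H4}), produce the linear DDE of Proposition 4.1; the remainders $N_f, N_g$ are at least quadratic in the history of $\tilde{I}$ on $[0,t]$. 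The $\xi(\cdot,0)$-dependent piece from $G$ in (\ref{AA*4}) is isolated on the right as a prescribed forcing of size $O(\ve e^{-t/p})$ by (\ref{AN4}) and Lemma 3.1.

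\textbf{Fixed-point map.} Fix $q > p$ and let $\mathcal{F}_q$ be the Banach space of $C^1$ functions $\tilde{J}:[0,\infty)\to\R$ with norm $\|\tilde{J}\|_q = \sup_{t\ge 0} e^{t/q}[|\tilde{J}(t)| + |\tilde{J}'(t)|]$, and let $\mathcal{B}_\eta \subset \mathcal{F}_q$ be the closed ball of radius $\eta$ whose elements have the prescribed initial value $\tilde{J}(0) = [I(0) - I_p]/(pI_p)$. For $\tilde{J} \in \mathcal{B}_\eta$, define $K\tilde{J}$ to be the solution of the \emph{linear} DDE
\be
\frac{d \tilde{I}(t)}{dt} + K(0)\tilde{I}(t) + \int_0^t K'(t-s)\tilde{I}(s)\,ds = \Phi(t, \tilde{J}(\cdot)),
\ee
where $\Phi$ gathers the initial-data forcing and the nonlinear terms $N_f(t,\tilde{J}), N_g(t,\tilde{J})$. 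Proposition 4.1 applied to the derivative $\tilde{I}'$ yields $\|K\tilde{J}\|_q \le C_q\bigl[\ve + |\tilde{J}(0)| + \|\Phi\|_q\bigr]$, so the self-map property on $\mathcal{B}_\eta$ reduces to a quadratic control of $\Phi$.

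\textbf{Contraction via quadratic estimate.} The central technical claim is
\be
\|N_f(\cdot, \tilde{J}_1) - N_f(\cdot, \tilde{J}_2)\|_q + \|N_g(\cdot, \tilde{J}_1) - N_g(\cdot, \tilde{J}_2)\|_q \le C\bigl(\|\tilde{J}_1\|_q + \|\tilde{J}_2\|_q\bigr)\|\tilde{J}_1 - \tilde{J}_2\|_q.
\ee
This follows by computing the second variation of $F$ and $G$ in (\ref{Y4}), (\ref{AA*4}) with respect to $v_t(\cdot)$ using (\ref{AE4}); the $C^2$ regularity of $h$ and of the initial data (via $\|\xi(\cdot,0)\|_{2,\infty} < \infty$) together with the Lipschitz property (\ref{J1}) of $dI$ supply the required pointwise bounds. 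The exponential factors $e^{-(t-\tau)/p}$ in (\ref{AE4}) combine with the $e^{-\tau/q}$ decay built into $\|\cdot\|_q$ to produce an $e^{-t/q}$ output in the convolution, because $q > p$. Combining with the linear bound gives $\|K\tilde{J}_1 - K\tilde{J}_2\|_q \le C_q \eta \|\tilde{J}_1 - \tilde{J}_2\|_q$, a contraction once $\eta$ and hence $\ve$ are sufficiently small.

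\textbf{Globalization and main obstacle.} The fixed point $\tilde{I}$ gives the unique global $C^1$ solution $I(t) = I_p[1 + p\tilde{I}(t)]$, and $\|\tilde{I}\|_q \le C_q \ve$ translates directly into (\ref{AZ4}). A posteriori $v_t(\cdot)$ lies in a compact subinterval of $(0,\infty)$ uniformly in $t$, so (\ref{AJ4}) holds for every $T > 0$ and the extension criterion of Lemma 4.1 is verified at each step. The principal obstacle is the quadratic estimate on $N_f, N_g$: a naive bound on (\ref{AE4}) produces integral contributions of size $O(t)$, which would destroy the exponential decay. One must integrate by parts, as was done implicitly in passing from (\ref{AF4}) to the form (\ref{L4}), so that the would-be $t$-linear terms cancel against the linear part and the genuine remainder carries an $e^{-(t-\tau)/p}$ kernel; this is precisely what allows the weighted convolution to close and yields the quadratic dependence on $\|\tilde{J}\|_q$.
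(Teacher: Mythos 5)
Your outline hinges on a fixed-point argument in the space $\mathcal{F}_q$ whose norm $\|\tilde{J}\|_q=\sup_{t\ge 0}e^{t/q}\bigl[|\tilde{J}(t)|+|\tilde{J}'(t)|\bigr]$ forces $\tilde{I}(t)\to 0$, i.e. $I(t)\to I_p$. This is the gap: the DDE (\ref{AH4}), (\ref{AI4}) does \emph{not} force $I(t)\to I_p$. The unknown $v_t(s)=[I(s)/I(t)]^{1/p}$ depends only on ratios of $I$, so the right-hand side of the DDE is invariant under $I\mapsto cI$; consequently $\lim_{t\to\infty}I(t)$ is a free parameter, and the theorem's conclusion (\ref{AZ4}) correspondingly bounds only $d\log I/dt$, not $I-I_p$. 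Concretely, your linear iteration map $K\tilde{J}$ solves a DDE of the form (\ref{H4}) with an exponentially small forcing; the Volterra machinery gives exponential decay of $(K\tilde{J})'$, but $K\tilde{J}$ itself converges to a generally nonzero constant (a one-line computation with $K(t)=e^{-t}$ already produces $\tilde{I}(t)\to \tilde{I}_0/2\neq 0$). Thus $K$ does not map $\mathcal{B}_\eta$ into itself, and the contraction scheme does not close. A related symptom is that your ball $\mathcal{B}_\eta$ implicitly requires $|I(0)-I_p|$ small, while the theorem asserts (\ref{AZ4}) for any $I(0)>0$.

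The underlying ideas — treating the nonlinear DDE as a perturbation of the linearized one, a weighted Banach space enforcing the rate $e^{-t/q}$ for $q>p$, the observation that $e^{-(t-\tau)/p}$ kernels turn $e^{-\tau/q}$ decay into $e^{-t/q}$ output, and the need to integrate by parts (as in passing from (\ref{AF4}) to (\ref{L4})) to cancel $O(t)$ contributions — are sound and worth keeping. The fix is to change the unknown: take $u(t)=\frac{1}{p}\frac{d}{dt}\log I(t)$ as the basic variable (as the paper does), writing the DDE as the nonlinear Volterra equation (\ref{BE4}) with kernel (\ref{BF4}). Work in the weighted space $\sup_t e^{t/q}|u(t)|$; the increments $\log I(t)-\log I(s)=p\int_s^t u$, which are the only quantities the equation actually sees, then satisfy $|\log I(t)-\log I(s)|\le Cq\|u\|_q e^{-s/q}$, so your quadratic estimate and the exponential-weighting computation transfer verbatim. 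The paper instead proceeds by a continuation argument — local existence from Lemma 4.1, the $T_{\delta,M}$ bootstrap with the translated equations (\ref{BO4})--(\ref{BT4}), and then a repetition with $u(t)e^{t/q}$ — which is heavier than a one-shot contraction but avoids having to identify the correct weighted ball in advance. Both routes are viable once $u$, rather than $\tilde{I}$, is the fixed-point unknown.
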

\begin{proof}
We observe that the inequality (\ref{AN4}) can be improved to
\begin{multline} \label{BA4}
|G(t,y,v_t(\cdot))|\le \ C_1e^{-t/p}\Big[ \ \|\xi(\cdot,0)-\xi_p(\cdot)\|_{1,\infty} \\
+\|v_t(\cdot)-1(\cdot)\|_\infty\  \Big]  \quad {\rm if \ }\|v_t(\cdot)-1(\cdot)\|_\infty\le 1/2, \ y\ge \ve_0 \ ,
\end{multline}
for some constant $C_1$ depending only on $\|h(\cdot)\|^*_{1,\infty}$.   Similarly to (\ref{AK4}) there is a constant $C_2$ depending only on $\|h(\cdot)\|^*_{2,\infty}$ such that
\begin{multline} \label{BB4}
\|\xi(\cdot,t)-\xi_p(\cdot)\|_{1,\infty} \ \le \ C_2\left[e^{-t/p}\|\xi(\cdot,0)-\xi_p(\cdot)\|_{1,\infty}+\|v_t(\cdot)-1(\cdot)\|_\infty\right] \\
 {\rm if \ }\|v_t(\cdot)-1(\cdot)\|_\infty\le 1/2 \ .
\end{multline}
It follows from  (\ref{BB4}) and the Lipschitz continuity of $dI$  that there exists positive $\eta<1/2$ such that 
\begin{multline} \label{BC4}
pI(\xi(\cdot,t))+\left[dI(\xi(\cdot,t)),  \ A\xi(\cdot,t)\right]\ge \frac{1}{2}\left\{pI(\xi_p))+\left[dI(\xi_p),  \ A\xi_p\right] \right\}  \\
{\rm provided} \quad \|v_t(\cdot)-1(\cdot)\|_\infty+ \|\xi(\cdot,0)-\xi_p(\cdot)\|_{1,\infty}  \ < \ \eta \ .
\end{multline}

In order to prove global existence of the solution to (\ref{AH4}), (\ref{AI4}) we observe that (\ref{AH4}) is equivalent to a non-linear Volterra integral equation by setting
\be \label{BD4}
u(t) \ = \ \frac{1}{p}\frac{d}{dt}\log I(t) \ , \quad {\rm whence \ } v_t(s)=\exp\left[-\int_s^t u(s') \ ds'\right] \ , \  \ 0<s<t.
\ee
Hence if we show that this Volterra equation has a solution $u(\cdot)\in L^1(\R^+)$ with small norm then global existence follows from Lemma 4.1. The Volterra equation can be written as
\be \label{BE4}
u(t)+\int_0^tK(t,s,v_t(\cdot))u(s)  \ ds \ = \ g(t,v_t(\cdot)) \  ,
\ee
where the function $K$ is given by the formula
\begin{multline} \label{BF4}
K(t,s,v_t(\cdot)) \ = \\ - \int_0^sd\tau\left\{\int_0^1v_t(\tau)^\la \ d\la\right\} \  
\frac{\left[dI(\xi(\cdot,t)), \ \int_0^1d\la \ dF(t,\cdot,\la v_t(\cdot)+(1-\la)1(\cdot);\tau)\right]}{pI(\xi(\cdot,t))+\left[dI(\xi(\cdot,t)),  \ A\xi(\cdot,t)\right]}   \ .
\end{multline}
It follows from (\ref{BB4}) that there exists $\ve_0$ with $0<\ve_0\le 1/2$ such that
\begin{multline} \label{BG4}
pI(\xi(\cdot,t,v_t(\cdot)))+\left[dI(\xi(\cdot,t,v_t(\cdot))),  \ A\xi(\cdot,t,v_t(\cdot))\right] \\ 
\ge \  \frac{1}{2}\left\{pI(\xi_p)+[dI(\xi_p),A\xi_p]\right\}>0
\end{multline}
provided 
\be \label{BH4}
\|\xi(\cdot,0)-\xi_p(\cdot)\|_{1,\infty}+\|v_t(\cdot)-1(\cdot)\|_\infty \ \le  \ \ve_0 \ .
\ee
We conclude from (\ref{AP4}), (\ref{BG4})  that
\be \label{BI4}
|K(t,s,v_t(\cdot))| \ \le \  C_1e^{-(t-s)/p}, \quad 0<s<t,
\ee
for some constant $C$  if (\ref{BH4}) holds.  Let $K(\cdot)$ be the function (\ref{G3}) and $C_2$ a suitable constant. We have from (\ref{L4}) and the  comment after (\ref{AF4}) that for any $\tau,\ve>0$ there exists $\eta>0$ such that
\begin{multline} \label{BJ4}
|K(t,s,v_t(\cdot))-K(t-s)| \ \le \ \ve +C_2e^{-t/p}\quad {\rm for \ } s+\tau\ge t\ge s\ge 0 \ , \\
{\rm provided \ } \|v_t(\cdot)-1(\cdot)\|_\infty \ \le  \ \eta \ .
\end{multline}

We show that for any $T>0$ there exists $\del_T>0$ depending on $T$ such that a  solution to (\ref{BE4}) exists up to time $T$ provided $\|\xi(\cdot,0)-\xi_p(\cdot)\|_{1,\infty} \le \del_T$. Furthermore, one has
\be \label{BK4}
\|v_T(\cdot)-1(\cdot)\|_\infty \ \le \ C_T\|\xi(\cdot,0)-\xi_p(\cdot)\|_{1,\infty}  \ ,
\ee
where $C_T$ is a constant also depending  on $T$. This follows by arguing as in Lemma 4.1. In fact let us suppose we have established (\ref{BK4}) for some $T\ge 0$. Hence there exists $\tilde{\del}_T>0$ such that if $\|\xi(\cdot,0)-\xi_p(\cdot)\|_{1,\infty} \le \tilde{\del}_T$ then (\ref{AJ4}) holds for some positive $m,M$ independent of $T$. Lemma 4.1 now implies that the solution to (\ref{BE4}) exists up to time $T+\del$, where $\del>0$ is independent of $T$.  We can estimate
 $\|v_{T+\del}(\cdot)-1\|_\infty$ from (\ref{AT4}), upon setting $K\chi=\chi$.  From (\ref{AE4}), (\ref{AI4}) we have that
 \be \label{BL4}
 |f(t,v_t(\cdot))| \ \le \ C\|v_t(\cdot)-1(\cdot)\|_\infty \quad {\rm for  \ some  \ constant  \ }C  \ ,
 \ee
 provided $\|v_t(\cdot)-1(\cdot)\|_\infty \le 1/2$. Similarly we have from (\ref{BA4})  that
  \be \label{BM4}
 |g(t,v_t(\cdot))| \ \le \  Ce^{-t/p}\Big[ \ \|\xi(\cdot,0)-\xi_p(\cdot)\|_{1,\infty} 
+\|v_t(\cdot)-1(\cdot)\|_\infty\  \Big]  \ .
 \ee
 It follows from (\ref{AT4}), (\ref{AU4}), (\ref{BL4}), (\ref{BM4}) that
 \be \label{BN4}
 |\chi(\tau)-1| \ \le \ C\tau\|\xi(\cdot,0)-\xi_p(\cdot)\|_{1,\infty}  \ , \quad 0<\tau<\del \ ,
 \ee
 for some constant $C$.  Evidently (\ref{AU4}), (\ref{BK4}), (\ref{BN4}) imply that the inequality (\ref{BK4}) with $T+\del$ replacing $T$ holds provided $\|\xi(\cdot,0)-\xi_p(\cdot)\|_{1,\infty} \le \tilde{\del}_T$.
 
 Next we show that the $\del_T, \ C_T$ of (\ref{BK4}) can be chosen independent of $T>0$.  To see this we denote for $T\ge 0$ by $u_T(\cdot)$ the function $u_T(t)=u(t+T), \ t\ge 0$.  From (\ref{BE4}) we have that
 \be \label{BO4}
u_T(t)+\int_0^tK(t+T,s+T,v_{t+T}(\cdot))u_T(s)  \ ds \ = \ g(t) \  ,
\ee
where $g(\cdot)$ is given by 
\be \label{BP4}
g(t) \ = \ g(t+T,v_{t+T}(\cdot))-\int_0^TK(t+T,s,v_{t+T}(\cdot))u(s) \ ds \ .
\ee
For $\del,M>0,$ let $T_{\del,M}>0$ be defined by
\begin{multline} \label{BQ4}
T_{\del,M} \ = \ \sup\left\{ T>0: \ \|v_T(\cdot)-1(\cdot)\|_\infty \ \le \ M\|\xi(\cdot,0)-\xi_p(\cdot)\|_{1,\infty} \ \right\} \\
{\rm provided \ }  \|\xi(\cdot,0)-\xi_p(\cdot)\|_{1,\infty}  \ < \ \del \ .
\end{multline}
From (\ref{BK4}) it follows that for any $T>0$, if $\del=\del_T, \ M=C_T$ then $T_{\del,M}\ge T$.  We also have from (\ref{BI4}), (\ref{BJ4}) that there exists $T_\infty,\eta_\infty>0$ such that for  $\|v_T(\cdot)-1(\cdot)\|_\infty \ \le \eta_\infty, \ T\ge T_\infty,$ the integral equation (\ref{BO4}) with $T=T_\infty$ is invertible in $L^1(\R^+)$ and $\|u_{T_\infty}\|_{L^1(\R^+)}\le C_\infty\|g\|_{L^1(\R^+)} $ for some constant $C_\infty$.

We need to show that $T_{\del,M}=\infty$ for some $\del,M>0$, so we assume for contradiction that $T_{\del,M}<\infty$ for all $\del,M>0$.  We also have from (\ref{BK4}) that $\lim_{\del\ra0,M\ra\infty}T_{\del,M}=\infty$, so we will consider $\del,M$ with $T_{\del,M}>T_\infty$ and $\del$ sufficiently small so that (\ref{BH4}) with $t=T_{\del,M}$ holds.  From the invertibility of (\ref{BO4}) we have that
\be \label{BR4}
\int_{T_\infty}^{T_{\del,M}} |u(t)| \ dt \ \le \ C_\infty\int_0^{T_{\del,M}-T_\infty} |g(t)|  \ dt \ .
\ee
From (\ref{BI4}), (\ref{BM4}) the integral on the RHS of (\ref{BR4}) can be estimated as
\begin{multline} \label{BS4}
\int_0^{T_{\del,M}-T_\infty} |g(t)|  \ dt  \ \le \ C_1\|\xi(\cdot,0)-\xi_p(\cdot)\|_{1,\infty} \\
+C_2e^{-T_{\infty}/p}\|v_{T_{\del,M}}(\cdot)-1(\cdot)\|_\infty  \ ,
\end{multline}
for some constants $C_1,C_2$ with $C_2$ not depending on $T_\infty$.  Observe now that
\be \label{BT4}
\|v_{T_{\del,M}}(\cdot)-1(\cdot)\|_\infty  \ \le \ C_3\|\xi(\cdot,0)-\xi_p(\cdot)\|_{1,\infty} +C_4\int_{T_\infty}^{T_{\del,M}} |u(t)| \ ds \ ,
\ee
where $C_4$ is independent of $T_\infty$.  It follows on choosing $T_\infty$ sufficiently large that (\ref{BK4}) holds with $T=T_{\del,M}$  and a constant $C_T$ determined from the constants in (\ref{BR4})-(\ref{BT4}). Evidently if $M$ is large this constant will be strictly less than $M$, contradicting the definition of $T_{\del,M}$. 

We have shown that if $\ve>0$ in the statement of the lemma  is sufficiently small, then the function $t\ra \frac{d}{dt}\log I(t)$ is in $L^1(\R^+)$ with $L^1$ norm bounded by $C\|\xi(\cdot,0)-\xi_p(\cdot)\|_{1,\infty}$ for some constant $C$.  To obtain the exponential decay (\ref{AZ4}) we repeat the above argument using the function $t\ra u(t)e^{t/q}$. 
\end{proof}
We give now an alternative proof of Theorem 3.1 with $\del_1(\cdot), \ \del_2(\cdot)$ given by (\ref{U3}), (\ref{V3}).  
\begin{proof}[Proof of Theorem 3.1]
From (\ref{D2}) we have that
\begin{multline} \label{BU4}
\xi(y,t)-\xi_p(y) \ = \ \exp\left[-\int_0^t \rho(s) \ ds\right]\left\{\xi(y(0),0)-\xi_p(y(0))\right\} \\
+\left\{\exp\left[-\int_0^t \rho(s) \ ds\right]\xi_p(y(0))-e^{-t/p}\xi_p(y_p(0))\right\} \\
+\int_0^t ds \left\{  h(y(s))\exp\left[-\int_s^t \rho(s') \ ds'\right]  - h(y_p(s))e^{-(t-s)/p} \ \right\} \ .
\end{multline}
To estimate the RHS of (\ref{BU4}) we use the inequality
\be \label{BV4}
\left|\exp\left[\sig\int_s^t \rho(s') \ ds'\right] -e^{\sig(t-s)/p}\right| \ \le \  Ce^{\sig(t-s)/p}\int_s^t |u(s')| \ ds' \ , \quad \sig=\pm1 \ , \ 0<s<t \ ,
\ee
which holds for some constant $C$.  From (\ref{C2}) and (\ref{BV4}) with $\sig=1$ it follows there is a constant $C$ such that
\be \label{BW4}
|y(s)-y_p(s)| \ \le \ C|y_p(s)|\int_s^t |u(s')| \ ds' \ , \quad 0<s<t \ .
\ee
The inequality (\ref{BV4}) with $\sig=-1$, (\ref{BW4}) and (\ref{AZ4}) enables us to estimate the RHS of (\ref{BU4}) and its derivatives with respect to $y$. This yields the inequality  (\ref{AT3}). 
\end{proof}

\vspace{.1in}

\section{The case $h(\cdot)\equiv {\rm constant}$}
It is well known that certain scalar differential delay equations are equivalent to a system of ordinary differential equations (see Chapter 7 of \cite{smith}). This is the situation for (\ref{AH4}), (\ref{AI4}) when $h(\cdot)\equiv h_\infty>0$ is a constant.  In this section we shall  use this property to prove Theorem 1.2 in the case $h(\cdot)$ constant. We define $[I_1(t),I_2(t)], \ t>0,$ by
\be \label{A5}
I_1(t) \ = \ \left(\frac{I(t)}{I(0)}\right)^{1/p}, \quad I_2(t)\  =\  \frac{1}{p}\int_0^te^{-(t-s)/p}   \left(\frac{I(s)}{I(0)}\right)^{1/p} \ ds  \ .
\ee
From (\ref{X4}) we have that
\be \label{B5}
z(0) \ = \ e^{t/p}y+pe^{t/p}\frac{I_2(t)}{I_1(t)} \ .
\ee
Hence (\ref{D2}) implies that
\be \label{C5}
\xi(y,t) \ = \ e^{-t/p}\frac{1}{I_1(t)}\xi\left(I_1(t)z(0),0\right)+ph_\infty \frac{I_2(t)}{I_1(t)} \ .
\ee
It follows from (\ref{AH4}) and (\ref{A5})-(\ref{C5}) that  $[I_1(t),I_2(t)]$ satisfies a system of equations 
\begin{eqnarray} \label{D5}
\frac{dI_1(t)}{dt}+\al\left(I_1(t),I_2(t),t\right)\left[I_1(t)-I_2(t)\right] \ &=&  \  \beta\left(I_1(t),I_2(t),t\right) \ ,\\
\frac{dI_2(t)}{dt} \ &=& \ \frac{1}{p}\left[I_1(t)-I_2(t)\right] \ , \nonumber
\end{eqnarray}
where the functions $\al(I_1,I_2,t), \ \beta(I_1,I_2,t)$ are determined by (\ref{AI4}). Thus from (\ref{B5}), (\ref{C5}) we define
\be \label{E5}
\xi(I_1,I_2,y,t) \ = \ e^{-t/p}\frac{1}{I_1}\xi\left(e^{t/p}I_1y+pe^{t/p}I_2,0\right)+ph_\infty \frac{I_2}{I_1} \ .
\ee
Then from (\ref{Y4}), (\ref{AI4}), we have that
\be \label{F5}
\al(I_1,I_2,t) \ = \ -\frac{h_\infty[dI(\xi(I_1,I_2,\cdot,t)),1(\cdot)]}{pI(\xi(I_1,I_2,\cdot,t))+[dI(\xi(I_1,I_2,\cdot,t)),A\xi(I_1,I_2,\cdot,t)]} \ .
\ee
We can obtain a formula for the function $\beta(I_1,I_2,t)$ from (\ref{Z4}), (\ref{AA*4}), (\ref{AI4}).   We define the function $\ga(I_1,I_2,y,t)$ by
\be \label{G5}
\ga(I_1,I_2,y,t) \ = \ \frac{1}{p}e^{-t/p}\xi\left(e^{t/p}I_1y+pe^{t/p}I_2,0\right) - \left(1+\frac{y}{p}\right)I_1D\xi\left(e^{t/p}I_1y+pe^{t/p}I_2,0\right) \ .
\ee
Then $\beta(I_1,I_2,t)$ is given by the formula
\be \label{H5}
\beta(I_1,I_2,t) \ = \ -\frac{[dI(\xi(I_1,I_2,\cdot,t)),\ga(I_1,I_2,\cdot,t)]}{pI(\xi(I_1,I_2,\cdot,t))+[dI(\xi(I_1,I_2,\cdot,t)),A\xi(I_1,I_2,\cdot,t)]} \ .
\ee
\begin{proof}[Proof of Theorem 1.2]
We have from property $(d)$ of $I(\cdot)$ and Lemma 2.2 that there exist positive constants $c,C$ such that $c\le I_1(t), I_2(t)\le C$ for all $t\ge 0$.  Hence  from (\ref{G5}), (\ref{H5}) and the uniform lower bound for the denominator on the RHS of (\ref{G5}) established in Theorem 1.1, there exists a constant $C_1$ such that
\be \label{I5}
0 \ \le \ \beta(I_1(t),I_2(t),t) \ \le \ C_1e^{-t/p} \ , \quad t\ge 0 \ .
\ee
Setting $J(t)=I_1(t)-I_2(t)$ we have from (\ref{D5}) that
\be \label{J5}
\frac{dJ(t)}{dt}+m(t)J(t) \ = \ g(t) \ ,  \quad m(t)\ge \frac{1}{p} \ , \ \ |g(t)|\le C_1e^{-t/p} \ .
\ee
The solution to (\ref{J5}) is given by
\be \label{K5}
J(t) \ = \ J(0)\exp\left[-\int_0^tm(s) \ ds\right]+\int_0^t dt'  \ g(t')\exp\left[-\int_{t'}^tm(s) \ ds\right] \ .
\ee
We conclude from (\ref{J5}), (\ref{K5}) that
\be \label{L5}
|J(t)| \ \le \ |J(0)|e^{-t/p}+C_1te^{-t/p} \ .
\ee
The result follows using (\ref{E5}) from the uniform bounds on $I_1(\cdot),I_2(\cdot)$ and (\ref{L5}).  
\end{proof}

\vspace{.1in}

\section{Differential Delay Equations and Volterra Integral Equations}
In this section we prove some results for linear Volterra integral equations and their corresponding DDEs. In $\S8$ we  generalize parts of the argument used in these proofs to obtain results for the non-linear DDE  (\ref{B4}).  Our first consideration is the non-translation invariant Volterra equation
\be \label{A6}
u(t)+\int_0^tK(t,s)u(s) \ ds \ = \ g(t) \ , \quad t\ge 0 \ .
\ee
The solution to this equation can be formally written as
\be \label{B6}
u(t) \ = \ g(t)-\int_0^tr(t,s) g(s) \ ds \ , \quad t\ge 0,
\ee
where $r(\cdot,\cdot)$ is the {\it resolvent kernel} for the Volterra equation (see Chapter 9 of \cite{grip}).  We first summarize the proof of a beautiful result of Gripenberg (Theorem 9.1 of Chapter 9 of \cite{grip} and  Theorem 5 of \cite{grip80}), which illustrates a close relationship between methods for estimating solutions of Volterra integral equations and solutions of DDEs:
\begin{proposition}
Assume the kernel $K(\cdot,\cdot)$ for (\ref{A6}) is continuous non-negative and bounded, the functions $t\ra K(t,s)$ are decreasing on $[s,\infty)$ for all $s\ge 0$, the function $w(t)=\int_0^tK(t,s) \ ds$ converges as $t\ra\infty$, and that
\be \label{C6}
\lim_{T\ra\infty}\sup_{t\ge0}\int_0^{\max\{t-T,0\}} K(t,s)  \ ds \ < \ 1 \ .
\ee
 Then 
\be \label{D6}
\sup_{t\ge 0}\int_0^t |r(t,s)| \ ds \ < \ \infty \ .
\ee
\end{proposition}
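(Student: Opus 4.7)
My approach is to recast the conclusion as a uniform $L^\infty$-bound on the resolvent. Since every solution of $u+T_K u=g$ (where $T_K u(t)=\int_0^t K(t,s)u(s)\,ds$) satisfies $u=g-\int_0^t r(t,s)g(s)\,ds$, the bound $\sup_t\int_0^t|r(t,s)|\,ds<\infty$ is equivalent to an a priori estimate $\|u\|_\infty\le C\|g\|_\infty$. So the plan is to establish this uniform bound, then extract the resolvent estimate by duality (testing against $g=\mathrm{sign}(r(t,\cdot))$ truncated to $[0,t]$).

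The key auxiliary object is the function $\phi(t)=1-\int_0^t r(t,s)\,ds$, which satisfies the scalar Volterra equation $\phi+T_K\phi=1$. I would first show that $0\le\phi(t)\le 1$ for all $t\ge 0$, using the monotonicity of $K$ in its first argument. Differentiating the equation (after smoothing $K$ by approximation if needed) gives
\begin{equation*}
\phi'(t)+K(t,t)\phi(t) \ = \ -\int_0^t\pa_t K(t,s)\phi(s)\,ds,
\end{equation*}
whose right-hand side is non-negative whenever $\phi\ge 0$ on $[0,t]$, because $\pa_t K\le 0$ by hypothesis. This prevents $\phi$ from crossing zero from above: at any putative first zero $t_0$ one would have $\phi'(t_0)\ge 0$, ruling out a transition to negative values, so $\phi\ge 0$ throughout. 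The upper bound $\phi\le 1$ is then immediate from $\phi=1-T_K\phi$ together with non-negativity of $K$ and $\phi$. To upgrade to a strictly positive lower bound $\inf_t\phi(t)\ge c>0$, I would choose $T$ so that $\sup_t\int_0^{\max\{t-T,0\}}K(t,s)\,ds\le\theta<1$, then decompose the Volterra equation at a near-infimum time $t^*$: using $\phi\le 1$ on the tail and the continuity of $\phi$ together with the convergence of $w(\cdot)$ on the head window $[\max\{t^*-T,0\},t^*]$, this should yield a self-improving inequality of the form $\inf\phi\ge 1-\theta-(\text{bounded constant})\cdot\inf\phi+o(1)$, forcing $\inf\phi>0$.

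The remaining step, which I expect to be the main obstacle, is to promote the bound on $(I+T_K)^{-1}1$ to a bound on $(I+T_K)^{-1}g$ for an arbitrary $g\in L^\infty$. Since $r$ is generally sign-indefinite, the naive comparison $|u|\le\|g\|_\infty\phi$ need not hold. My plan is to run the same tail/head decomposition directly on $|u|$: with $F(t)=\sup_{s\le t}|u(s)|$, for $t\ge T$ one obtains
\begin{equation*}
|u(t)|\ \le\ \|g\|_\infty+\theta F(t-T)+\int_{t-T}^t K(t,s)|u(s)|\,ds,
\end{equation*}
and then one must control the short-window head term using the $t$-monotonicity of $K$ (which caps the mass of $K(t,\cdot)$ on $[t-T,t]$ by $K(s,s)(t-s)$-type estimates or by $w_\infty-\theta+o(1)$) together with the positivity of $\phi$ already established. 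This closes the a priori estimate via a Yorke-type maximum principle in the spirit of \cite{yorke} and the arguments alluded to for the DDE in \S8, and thereby yields the desired bound on $\sup_t\int_0^t|r(t,s)|\,ds$.
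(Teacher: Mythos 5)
Your reduction to the a priori estimate $\|u\|_\infty\le C\|g\|_\infty$ is exactly where the paper begins, and your choice of $\gamma_0<1$ and $T_0$ from (\ref{C6}) matches the paper's (\ref{E6}). Beyond that, though, the argument diverges and there is a genuine gap in the crucial final step.

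The detour through $\phi=(I+T_K)^{-1}1$ does not close the argument for two reasons. First, the self-improving inequality you sketch for $\inf\phi>0$ does not work as stated: at a near-infimum time $t^*$ you get $\phi(t^*)\ge 1-\theta-\int_{t^*-T}^{t^*}K(t^*,s)\phi(s)\,ds$, but the head-window mass $\int_{t^*-T}^{t^*}K(t^*,s)\,ds$ can be as large as $w(t^*)\to w_\infty$, which may exceed $1-\theta$; knowing $\phi$ is small only at $t^*$, not throughout $[t^*-T,t^*]$, the inequality can be vacuous. Second, even granting $\inf\phi>0$, it is not clear how to use this to bound $u=(I+T_K)^{-1}g$ for sign-indefinite $g$. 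Since the resolvent $r$ is not sign-definite, there is no comparison principle $|u|\le\|g\|_\infty\phi$, and your sketch of the last step (bounding the head term using $t$-monotonicity, $w_\infty-\theta+o(1)$, and Yorke-type arguments) does not specify a mechanism that makes the coefficient of the head term strictly less than one.

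The paper's proof handles the head window by a dichotomy you do not introduce. At a time $\tau$ of maximal $|u|$, either $u$ has no sign change on $[\max\{\tau-T_0,0\},\tau]$ — in which case $\int_{\tau-T_0}^\tau K(\tau,s)u(s)\,ds$ has the same sign as $u(\tau)$, so it can simply be discarded and $|u(\tau)|\le\gamma_0|u(\tau)|+|g(\tau)|$ — or $u$ has a zero $\tau_0$ in that window. In the latter case the paper differentiates the Volterra equation to the DDE $u'+K(t,t)u+\int_0^t\partial_t K(t,s)u(s)\,ds=g'$, integrates from $\tau_0$ (with $u(\tau_0)=0$), and then exploits the exact identity
\[
\left(-\frac{d}{dt}\right)\left[\int_t^\tau K(s,s)\,ds+w(t)\right]\ =\ \int_0^t\left|\frac{\partial K(t,s)}{\partial t}\right|\,ds ,
\]
valid because $\partial_t K\le0$. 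Integration of this identity gives (\ref{L6})--(\ref{M6}), and the convergence of $w$ then forces the coefficient of $|u(\tau)|$ to be strictly below one for large $\tau$. That exact cancellation — not a raw mass bound on the head window — is what makes the argument close, and it is the ingredient missing from your plan.
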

\begin{proof}
It will be sufficient to show there exists a constant $C$ such that for all $g\in L^\infty(\R^+)$ the solution $u(\cdot)$ of (\ref{A6}) satisfies  $\|u\|_\infty\le C\|g\|_\infty$.
To do this we observe from (\ref{C6}) that there exists  $\ga_0, \ 0<\ga_0<1,$ and $T_0>0$ such that
\be \label{E6}
\int_0^{\max\{t-T_0,0\}} K(t,s)  \ ds \ \le \ga_0 < \ 1 \quad {\rm for \ all \ } t\ge 0 \ .
\ee 
Suppose now that $\tau>0$ is such that $|u(\tau)|=\sup_{0<t<\tau}|u(t)|$.  From  (\ref{A6}) we have that
\be \label{F6}
u(\tau)+ \int^\tau_{\max\{\tau-T_0,0\}} K(\tau,s)u(s)  \ ds  \ = \ -\int_0^{\max\{\tau-T_0,0\}} K(\tau,s) u(s)  \ ds+g(\tau) \ .
\ee
In the case when the function $u(\cdot)$ does not change sign in the interval $[\max\{\tau-T_0,0\},\tau]$  we can conclude from (\ref{F6}) that 
$|u(\tau)|\le |g(\tau)|/(1-\ga_0)$.  Alternatively, there exists $\tau_0$ in the interval $[\max\{\tau-T_0,0\},\tau]$ such that $u(\tau_0)=0$. This implies that $|u(\tau)|$ is bounded, with the bound depending only on the maximum of $|g(\cdot)|$ in the interval $[\max\{\tau-T_0,0\},\tau]$. To see this we differentiate (\ref{A6}) to obtain the DDE
\be \label{G6}
\frac{du(t)}{dt}+ K(t,t)u(t)+\int_0^t\frac{\pa K(t,s)}{\pa t}u(s) \ ds \ = \ g'(t) \ \ .
\ee
Integrating (\ref{G6}) for $t>\tau_0$ with initial condition $u(\tau_0)=0$ we obtain the identity
\be \label{H6}
u(\tau) \ = \ \int_{\tau_0}^\tau dt \  \exp\left[-\int_{t}^\tau K(s,s) \ ds\right]\left\{g'(t)-\int_0^t\frac{\pa K(t,s)}{\pa t}u(s) \ ds\right\} \ .
\ee
We see upon integration by parts that the integral involving $g'(\cdot)$ on the RHS of (\ref{H6}) is the same as
\be \label{I6}
g(\tau)-\exp\left[-\int_{\tau_0}^\tau K(s,s) \ ds\right] g(\tau_0)-\int_{\tau_0}^\tau dt \  \exp\left[-\int_{t}^\tau K(s,s) \ ds\right] \ K(t,t)g(t) \ ,
\ee
which is bounded in absolute value by $3\|g(\cdot)\|_\infty$. The second term on the RHS of (\ref{H6})  is bounded in absolute value by
\be \label{J6}
|u(\tau)|\int_{\tau_0}^\tau dt \  \exp\left[-\int_{t}^\tau K(s,s) \ ds\right]\int_0^t\left|\frac{\pa K(t,s)}{\pa t}\right| \ ds \ .
\ee
If we can show that the coefficient of $|u(\tau)|$ in (\ref{J6}) is less than $1$ then (\ref{H6}) implies that $|u(\tau)|$ is bounded by a constant times $\|g(\cdot)\|_\infty$. Observe now that
\be \label{K6}
\left(-\frac{d}{dt}\right) \left[\int_t^\tau K(s,s) \ ds+ w(t)\right] \ = \ \int_0^t\left|\frac{\pa K(t,s)}{\pa t}\right| \ ds \ ,
\ee
and consequently that
\begin{multline} \label{L6}
\int_{\tau_0}^\tau dt \  \exp \left[-\int_t^\tau K(s,s) \ ds- w(t)\right] \int_0^t\left|\frac{\pa K(t,s)}{\pa t}\right| \ ds \\
= \ \exp \left[- w(\tau)\right]-\exp \left[-\int_{\tau_0}^\tau K(s,s) \ ds- w(\tau_0)\right]  .
\end{multline}
It follows from (\ref{L6}) that
\begin{multline} \label{M6}
\int_{\tau_0}^\tau dt \  \exp\left[-\int_{t}^\tau K(s,s) \ ds\right]\int_0^t\left|\frac{\pa K(t,s)}{\pa t}\right| \ ds \ \le \\
 \exp \left[\sup_{\tau_0<t<\tau}w(t)- w(\tau_0)\right]\left\{\exp \left[w(\tau_0)- w(\tau)\right]-\exp \left[-\int_{\tau_0}^\tau K(s,s) \ ds\right]\right\} \ .
\end{multline}
Since $\lim_{t\ra\infty}w(t)$ exists, the RHS of (\ref{M6}) is strictly less than $1$ for $\tau$ sufficiently large.
\end{proof}
\begin{rem}
In the translation invariant case $K(t,s)=K(t-s)$, Proposition 6.1 implies that $r(t,s)=r(t-s)$ and the function $r(\cdot)$ is integrable on $
R^+$. This is the result for Volterra equations which we used in $\S3,\S4$ to prove local asymptotic stability of  solutions to (\ref{A1}). 
\end{rem}

Next we consider a class of linear DDEs of the form
\be \label{N6}
\frac{dI(t)}{dt}+a(t)I(t)+\int_0^tk(t,s)[I(t)-I(s)] \ ds \ = \ f(t) \ ,  \ \ t>0 \ .
\ee
Setting $u(t)=dI(t)/dt, \ t\ge0,$ we see from (\ref{N6}) that $u(\cdot)$ satisfies the Volterra equation (\ref{A6}) with kernel $K$ and $g$ given by
\be  \label{O6}
K(t,s) \ = \ a(t)+\int_0^s k(t,s') \ ds' \ , \quad g(t) \ = \ f(t)-a(t)I(0) \ .
\ee
The DDE (\ref{H4}) is of the form (\ref{N6}) with the functions $a(\cdot), \ k(\cdot,\cdot)$ given by
\be \label{P6}
a(t) \ =  \ K(t) \ , \quad k(t,s) \ = \ - K'(t-s) \ .
\ee
Then (\ref{O6}) gives $K(t,s)=K(t-s)$.  In this case we may conclude the following from the translation invariant results of Chapter II of \cite{grip}, or Proposition 6.1 applied to the translation invariant situation:  When the function $f(\cdot)$ on the RHS of (\ref{N6}) is integrable and the function $t\ra K(t)$ is non-negative decreasing and integrable then $u(\cdot)$ is also integrable. In particular, $I(t)$ converges as $t\ra\infty$. 

We shall obtain conditions on the functions $a(\cdot), \ k(\cdot,\cdot)$, in the non-translation invariant case, for the existence of $\lim_{t\ra\infty}I(t)$ when $f(\cdot)$ is integrable. Our methods resemble those of Yorke \cite{yorke} (see also Chapter $5$, section $5$ of \cite{hale}),  which are used to prove asymptotic stability of solutions to a {\it non-linear} DDE.  In the translation invariant case one has $\lim_{t\ra\infty}I(t)=0$ when $a(\cdot)\equiv a>0$ and $k(t,s)=k(t-s)$ is non-negative and integrable (see \cite{app} for this and related results). 
\begin{lem}
Assume  the function $k(\cdot,\cdot)$ of (\ref{N6}) is non-negative, the functions $s\ra k(t,s), \ 0<s<t,$ are integrable for all $t>0$, and
\be \label{Q6}
\sup_{s>0}\int_s^\infty k(t,s) \ dt \ < \ \infty \ .
\ee
Assume further that the function $a^-(\cdot)=-\min[a(\cdot),0]$ is in  $L^1(\R^+)$. Then if $f(\cdot)\in L^1(\R^+)$  the solution $I(\cdot)$ of (\ref{N6})  with initial condition $I(0)$ is bounded and $\|I(\cdot)\|_\infty\le C_1|I(0)|+C_2\|f(\cdot)\|_{L^1(\R^+)}$ for some constants $C_1,C_2$.  
\end{lem}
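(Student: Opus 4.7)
My plan is a Yorke-type maximum argument applied directly to the running supremum of $|I|$, bypassing the Volterra resolvent machinery of Proposition 6.1. The hypothesis (\ref{Q6}), interestingly, does not seem to be needed; only the non-negativity of $k$ and the $L^1$ bound on $a^-$ play a role.

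The first step is to pass to an inequality on $|I|$. Multiplying (\ref{N6}) by $\operatorname{sgn}(I(t))$ and using the pointwise bound $\operatorname{sgn}(I(t))\,I(s)\le|I(s)|$, one obtains
$$\frac{d}{dt}|I(t)| + a(t)|I(t)| + \int_0^t k(t,s)\bigl[|I(t)|-|I(s)|\bigr]\,ds \ \le \ |f(t)|, \qquad \text{a.e. } t>0,$$
since $I$ is absolutely continuous. Thus $|I|$ satisfies the analogous integro-differential inequality with right-hand side $|f|$.

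The second step is to introduce the continuous non-decreasing envelope $M(t)=\sup_{0\le s\le t}|I(s)|$, and to show that its right Dini derivative satisfies
$$D^+ M(t) \ \le \ a^-(t)\,M(t) + |f(t)|, \qquad t>0.$$
There are two cases. If $|I(t)|<M(t)$, continuity of $I$ makes $M$ constant in a right neighborhood of $t$, so $D^+M(t)=0$. If instead $|I(t)|=M(t)$, then $|I(s)|\le|I(t)|$ for every $s\le t$, hence by $k\ge 0$ the kernel term in the previous display is non-negative and may be discarded, yielding $\frac{d}{dt}|I(t)|\le -a(t)|I(t)|+|f(t)|\le a^-(t)M(t)+|f(t)|$, which also bounds $D^+M(t)$ at such $t$.

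The final step is the Gronwall conclusion. The continuous function
$$t\ \longmapsto\ M(t) - \int_0^t\bigl[a^-(s)\,M(s)+|f(s)|\bigr]\,ds$$
has $D^+\le 0$ everywhere on $[0,\infty)$, and a classical lemma then forces it to be non-increasing. This yields $M(t)\le |I(0)|+\int_0^t a^-(s)M(s)\,ds+\|f\|_{L^1(\R^+)}$, and Gronwall's inequality produces
$$M(t)\ \le\ \exp\bigl(\|a^-\|_{L^1(\R^+)}\bigr)\bigl[\,|I(0)|+\|f\|_{L^1(\R^+)}\,\bigr],$$
which is the stated bound with $C_1=C_2=\exp(\|a^-\|_{L^1(\R^+)})$. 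The main technical point requiring care is the standard implication ``continuous $F$ with $D^+F\le 0$ everywhere is non-increasing,'' which is what permits passage from the pointwise Dini bound on $M$ to its integrated Gronwall form.
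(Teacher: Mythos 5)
Your proposal is correct, and it takes a genuinely different route from the paper's. The paper works with the one-sided running maximum $I_{\rm max}(t)=\sup_{0<s<t}I(s)$ (and separately the running minimum), integrates $dI/dt$ only over the contact set $\{t:I(t)=I_{\rm max}(t)\}$ via (\ref{R6}), and arrives at the discrete inequality (\ref{U6}). That inequality only closes directly when $\int a^-<1$; the paper therefore bootstraps, absorbing the history on $[0,T_1]$ into redefined data $a_1,f_1$ as in (\ref{V6}) and repeating. Hypothesis (\ref{Q6}) is used precisely there, to ensure the redefined forcing $f_1(t)=f(t)+\int_0^{T_1}k(t,s)I(s)\,ds$ remains in $L^1$. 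You instead pass to the running supremum $M$ of $|I|$, prove a pointwise Dini bound $D^+M(t)\le a^-(t)M(t)+|f(t)|$, and close in one step by Gronwall. This eliminates the bootstrap and hence the need for (\ref{Q6}) — you are right that it is not logically needed for this lemma, though it is used elsewhere (e.g.\ Proposition 6.2) — and it produces the cleaner explicit constants $C_1=C_2=e^{\|a^-\|_{L^1}}$. One small presentational caveat: if $a$ and $f$ are merely $L^1$ rather than continuous, the Dini bound holds only almost everywhere; but since $I$, hence $M$, is locally Lipschitz from (\ref{N6}), $M$ is absolutely continuous and the Gronwall conclusion still holds. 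Your reduction of the kernel term — using $k\ge 0$ and $|I(s)|\le|I(t)|$ on the contact set to discard $\int k(t,s)[|I(t)|-|I(s)|]\,ds$ — is exactly the same structural observation the paper exploits in (\ref{R6})--(\ref{S6}), so the underlying mechanism is shared; what differs is the way the one-step estimate is turned into a global bound.
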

\begin{proof}
We define the function $I_{\rm max}(t)=\sup_{0<s<t}I(s)$ and  set $T^+_0=\sup\{t>0: \ I(s)\le 0, \ 0\le s\le t\}$. If $T^+_0=\infty$ then $I(\cdot)$ is non-positive. We consider the situation $T^+_0<\infty$, in which case   $I(T^+_0)=0$ if $T^+_0>0$. Let  $T>T^+_0$ be such that $I(T)=\sup_{0<s<T} I(s)$.  Then we have that
\be \label{R6}
I(T)-I(T^+_0) \ = \ \int_{(T^+_0,T)-\{T^+_0<t<T: I_{\rm max}(t)>I(t)\}} \frac{dI(t)}{dt} \ dt \ .
\ee
It follows  from (\ref{N6}), (\ref{R6}) that
\be \label{S6}
\sup_{0<t<T}I(t)-I(T^+_0)-\sup_{0<t<T}|I(t)|\int_{T^+_0}^T a^-(t) \  dt  \ \le \int_{T^+_0}^T |f(t)|  \ dt \ .
\ee
Evidently (\ref{S6}) holds for all $T\ge T^+_0$. 
We can make a similar argument to estimate $\min I(\cdot)$. Thus we set $T^-_0=\sup\{t>0: \ I(s)\ge 0, \ 0\le s\le t\}$. Similarly we have that
\be \label{T6}
\inf_{0<t<T}I(t)-I(T^-_0)+\sup_{0<t<T}|I(t)|\int_{T^-_0}^T a^-(t) \  dt  \ \ge -\int_{T^-_0} ^T |f(t)|  \ dt \ .
\ee
We conclude from (\ref{S6}), (\ref{T6}) that
\be \label{U6}
\sup_{0<t<T}|I(t)| \ \le \ |I(0)|+\sup_{0<t<T}|I(t)|\int_0^T a^-(t) \  dt  +\int_0^T |f(t)|  \ dt \ .
\ee
From (\ref{U6}) we obtain a bound on $\sup_{0<t<T_1}|I(t)|$ for any $T_1$ satisfying $\int_0^{T_1}a^-(t)  \ dt<1$.  We can extend the bound on $I(t)$ beyond $t\le T_1$ by rewriting the integral in (\ref{N6}) as an integral over the interval $(T_1,t)$ instead of $(0,t)$,  and replacing $a(\cdot), \ f(\cdot)$ by $a_1(\cdot), \ f_1(\cdot)$, where 
\be   \label{V6}
a_1(t)= a(t)+ \int_0^{T_1}k(t,s) \ ds, \quad f_1(t) = f(t)+\int_0^{T_1}k(t,s)I(s) \ ds \ .
\ee
Since $k(\cdot,\cdot)$ is non-negative one has  $a^-_1(\cdot)\le a^-(\cdot)$.  Furthermore, it follows from (\ref{Q6}) that $f_1(\cdot)$ is integrable.  Hence by repeating the previous argument, we obtain a bound on $\sup_{0<t<T_2}|I(t)|$ for any $T_2$ satisfying 
$\int_0^{T_2}a^-(t)  \ dt<2$. On continuing this process, we obtain an increasing sequence of times $T_1,T_2,...$ where we require $\int_0^{T_n}a^-(t)  \ dt<n, \ n=1,2,..$. Evidently since $a^-(\cdot)$ is integrable we can choose $T_n=\infty$ for some finite $n$, whence we obtain an upper bound on $\sup_{t>0}|I(t)|$. 
\end{proof} 
\begin{proposition}
Assume that $k(\cdot,\cdot)$ of (\ref{N6}) satisfies the conditions of Lemma 6.1, the function $b(t)=\int_0^t k(t,s) \ ds, \ t\ge 0,$ is bounded,  and in addition that
\be \label{W6}
\lim_{\ga\ra \infty}\sup_{T>0}\int_0^{\max\{T-\ga,0\}}ds\int_T^\infty k(t,s) \ dt \ = \ 0 \ .
\ee
Assume further that the function $a(\cdot)$ is integrable on $\R^+$.   Then  the solution $I(\cdot)$ of (\ref{N6}) converges at large time i.e.  $\lim_{t\ra\infty}I(t)=I_\infty$.  
\end{proposition}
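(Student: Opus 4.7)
The plan is to establish the Cauchy criterion for $I(t)$ as $t\to\infty$. By Lemma 6.1, the solution is bounded, say $\|I\|_\infty \le M$. Integrating the DDE (\ref{N6}) from $t_1$ to $t_2$ gives
\begin{equation*}
I(t_2) - I(t_1) \ = \ \int_{t_1}^{t_2} \bigl[f(r) - a(r) I(r)\bigr] \, dr \ - \ \mathcal{J}(t_1, t_2),
\end{equation*}
where $\mathcal{J}(t_1, t_2) = \int_{t_1}^{t_2} dr \int_0^r k(r,\sigma)\bigl[I(r) - I(\sigma)\bigr]\, d\sigma$. Since $a,f \in L^1(\R^+)$ and $\|I\|_\infty \le M$, the first term vanishes as $t_1\to\infty$ uniformly in $t_2 > t_1$, and the task reduces to showing $\mathcal{J}(t_1, t_2) \to 0$.

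For fixed $\gamma > 0$, split $\mathcal{J} = \mathcal{J}_{\rm far} + \mathcal{J}_{\rm near}$ according as $\sigma \in (0, t_1-\gamma)$ or $\sigma \in (t_1-\gamma, r)$. The far piece obeys the crude bound
\begin{equation*}
|\mathcal{J}_{\rm far}| \ \le \ 2M \int_0^{t_1-\gamma} d\sigma \int_{t_1}^\infty k(r,\sigma)\, dr,
\end{equation*}
whose right-hand side tends to $0$ as $\gamma\to\infty$, uniformly in $t_1 \ge 0$, by hypothesis (\ref{W6}) applied with $T = t_1$. So the content of the proof lies in controlling the recent-past term $\mathcal{J}_{\rm near}$.

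For $\mathcal{J}_{\rm near}$, write $I(r) - I(\sigma) = \int_\sigma^r u(\tau)\, d\tau$ with $u = I'$, and interchange orders of integration; using boundedness of $b(r) = \int_0^r k(r,\sigma)\, d\sigma$ together with (\ref{Q6}) one arrives at an estimate of the form
\begin{equation*}
|\mathcal{J}_{\rm near}| \ \le \ C \int_{t_1 - \gamma}^{t_2} |u(\tau)|\, d\tau
\end{equation*}
for a constant $C$ depending only on the hypotheses on $k$. Thus it will suffice to show that $u \in L^1(\R^+)$, or at least that the tail norm $\int_T^\infty |u|\, d\tau$ tends to $0$ as $T \to \infty$. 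The function $u$ satisfies the Volterra equation (\ref{A6}) with kernel $K(t,s) = a(t) + \int_0^s k(t,s')\, ds'$ and forcing $g(t) = f(t) - a(t) I(0) \in L^1(\R^+)$; hypothesis (\ref{W6}), together with the integrability of $a$ and the bounded-memory structure built into Lemma 6.1, should yield $\sup_{s\ge 0}\int_s^\infty |r(t,s)|\, dt < \infty$ for the resolvent kernel $r$, from which the representation $u = g - r*g$ forces $u\in L^1(\R^+)$.

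The main obstacle is the adaptation of Proposition~6.1 needed for this last step: that result provides a bound on $\sup_t \int_0^t |r(t,s)|\, ds$ (the $L^\infty$-operator bound), whereas here we need the dual bound $\sup_s \int_s^\infty |r(t,s)|\, dt$ (the $L^1$-operator bound). The monotonicity condition on $K(\cdot,s)$ used in Proposition~6.1 is replaced in the present setting by the $s$-averaged decay (\ref{W6}), which is exactly the condition one needs to propagate the argument to the transposed norm. Once this dual analogue of Proposition~6.1 is in place, assembling the pieces above completes the proof that $\lim_{t\to\infty} I(t)$ exists.
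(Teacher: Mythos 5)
The proposal has a genuine gap at exactly the point you flag yourself: everything hinges on showing $u = I' \in L^1(\R^+)$ (or at least that $\int_T^\infty|u|\,d\tau\to 0$), and this is asserted rather than proved. There is no reason under the stated hypotheses to expect a ``dual'' analogue of Proposition~6.1 with $\sup_s\int_s^\infty |r(t,s)|\,dt<\infty$. Indeed, Proposition~6.1 itself does not apply to the kernel $K(t,s)=a(t)+\int_0^s k(t,s')\,ds'$: $a$ is only assumed integrable (not non-negative, not decreasing), so $K$ need not be non-negative, $t\mapsto K(t,s)$ need not be decreasing, and $w(t)=\int_0^t K(t,s)\,ds = ta(t)+\int_0^t (t-s')k(t,s')\,ds'$ need not converge. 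Condition~(\ref{W6}) controls a tail of $k$ integrated in $t$, which is not the same kind of object as the conditions Proposition~6.1 imposes on $K$, and does not obviously transfer to the resolvent. Moreover, once $u\in L^1(\R^+)$ is granted, $I(t)=I(0)+\int_0^t u$ converges trivially, so the far/near splitting of $\mathcal{J}$ becomes superfluous; the plan really reduces entirely to the unproved claim, and that claim is strictly stronger than what needs to be shown ($u\in L^1$ implies, but is not implied by, $\lim_{t\to\infty}I(t)$ existing).

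The paper avoids this by a dichotomy that never requires $u\in L^1$. Either (i) for every $\ve,\ga>0$ and $\tau$, there is a late window $[T_{\ve,\ga,\tau}-\ga,\,T_{\ve,\ga,\tau}]$ on which the oscillation of $I$ is below $\ve$; then one reruns the Lemma~6.1 argument restarted from that window, with the old memory contribution bounded via~(\ref{Q6}) and~(\ref{W6}) (this is the inequality~(\ref{X6})--(\ref{Y6})), giving $\sup_{t>T_\del}[I(t)-I(T_\del)]<\del$ and its analogue from below. Or (ii) the oscillation of $I$ on every late window of some fixed width $\ga_0$ is at least some fixed $\ve_0>0$; then one integrates~(\ref{N6}) in the exact form~(\ref{Z6}) from a point where $I\le I_\infty^+ +\del-\ve_0$ to a nearby point where $I\ge I_\infty^+ -\del$, and the boundedness of $b$ keeps the exponential factor $\exp[-\int b]$ bounded away from $0$ on fixed-length intervals, so the recovery is impossible for small $\del$ and large $N$, using~(\ref{W6}) to kill the far memory. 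This is where the hypothesis that $b$ is bounded enters; your sketch does not use it. If you want to pursue your route instead, you would need a direct proof that the tail of $u$ in $L^1$ vanishes, and I do not see how to get that from the stated hypotheses.
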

\begin{proof}
We first assume  that for any $\ve,\ga>0,\ \tau>\ga,$ there exists $T_{\ve,\ga,\tau}>\tau$ such that $|I(t)-I(s)|<\ve$ for $t,s\in[T_{\ve,\ga,\tau}-\ga,T_{\ve,\ga,\tau}]$. Then we can argue as in Lemma 6.1 to conclude the result.  Thus for $t>T_{\ve,\ga,\tau}$ we set 
$I_{\rm max}(t)=\sup_{T_{\ve,\ga,\tau}<s<t} I(s)$ and consider $T> T_{\ve,\ga,\tau}$ such that $I(T)= I_{\rm max}(T)$.  Integration of (\ref{N6}), using the identity (\ref{R6}),  yields the inequality 
\begin{multline} \label{X6}
I(T)-I(T_{\ve,\ga,\tau}) \ \le \int_{T_{\ve,\ga,\tau}}^\infty |f(t)|+|a(t)|\|I\|_\infty \ dt \\
+\int_{T_{\ve,\ga,\tau}}^T dt \ \left\{\ve\int_{T_{\ve,\ga,\tau}-\ga }^{T_{\ve,\ga,\tau}} k(t,s) \ ds+2\|I(\cdot)\|_\infty\int_0^{T_{\ve,\ga,\tau}-\ga} k(t,s) \ ds \ \right\} \ .
\end{multline}
We have now that
\be \label{Y6}
\int_{T_{\ve,\ga,\tau}}^T dt \ \int_{T_{\ve,\ga,\tau}-\ga }^{T_{\ve,\ga,\tau}} k(t,s) \ ds \ \le \ C\ga \ ,
\ee
where $C$ is an upper bound for the LHS of (\ref{Q6}). 
It follows from (\ref{W6})-(\ref{Y6}) that for any $\del>0$ there exists $T_\del>0$ such that $\sup_{t>T_\del}[I(t)-I(T_\del)]<\del$.  Since we can obtain an analogous estimate for the infimum, we conclude that  $\lim_{t\ra\infty} I(t)=I_\infty$ exists.

Alternatively, there exists  $\ve_0,\ga_0>0, \ \tau_0>\ga_0$ such that $\sup_{s,t\in[T-\ga_0,T]}|I(t)-I(s)| \ge \ve_0$ for all $T\ge \tau_0$.   We integrate (\ref{N6}) to obtain for $0<T_1<T_2$ the formula
\begin{multline} \label{Z6}
I(T_2) \ = \ I(T_1)\exp\left[-\int_{T_1}^{T_2} b(t) \ dt \ \right]+\\
\int_{T_1}^{T_2} \exp\left[-\int_t^{T_2} b(s) \ ds \ \right]\left\{f(t)-a(t)I(t)+\int_0^t k(t,s)I(s) \ ds\right\} \ dt \ .
\end{multline}
Letting $I^+_\infty=\limsup_{t\ra\infty}I(t)$, there exists  for any $\del>0, \ N=1,2,..,$ a  time $T_{\del,N}>\max[\tau_0,N]$  such that $I(T_{\del,N})\ge I_\infty^+-\del$ and $I(t)\le I_\infty^++\del$ for $T_{\del,N}-N\le t\le T_{\del,N}$.  Since the oscillation of $I(\cdot)$ in the interval $[T_{\del,N}-\ga_0,T_{\del,N}]$ exceeds $\ve_0$,  there exists $\tau_{\del,N}\in[T_{\del,N}-\ga_0,T_{\del,N}]$ such that $I(\tau_{\del,N})\le I_\infty^++\del-\ve_0$.  We set $T_1=\tau_{\del,N}, \ T_2=T_{\del,N}$ in (\ref{Z6}) and conclude that
\begin{multline} \label{AA6}
I_\infty^+-\del \ \le \ [I^+_\infty+\del-\ve_0]\exp\left[-\int_{\tau_{\del,N}}^{T_{\del,N}} b(t) \ dt \ \right] \\
+\left\{1-   \exp\left[-\int_{\tau_{\del,N}}^{T_{\del,N}} b(t) \ dt \ \right]  \right\}(I^+_\infty+\del) \\
+ \int_{\tau_{\del,N}}^\infty |f(t)| +\|I(\cdot)\|_\infty\left\{|a(t)|+\int^{T_{\del,N}-N}_0 k(t,s) \ ds\right\}\ dt \ .
\end{multline}
Using (\ref{W6}) and the boundedness of the function $b(\cdot)$, we obtain a contradiction from (\ref{AA6}) by choosing $\del$ sufficiently small and $N$ sufficiently large.
\end{proof}

\vspace{.1in}

\section{Some Optimal Control Problems}
In this section we establish some key properties of the function $f(t,v_t(\cdot))$ defined by (\ref{AI4}), which will enable us to obtain global asymptotic stability results for the DDE (\ref{AH4}).  We shall accomplish this by obtaining global properties of the functional $F(t,y,v_t(\cdot))$ defined by (\ref{Y4}). We have already seen from (\ref{AF4})  that the gradient of $F(t,y,v_t(\cdot))$ with respect to $v_t(\cdot)$ at $v_t(\cdot)\equiv 1$ is non-negative modulo exponentially small terms, provided $h(\cdot)$ satisfies the conditions of Lemma 3.1.  We shall prove the following global result:
\begin{theorem}
Let $h:(0,\infty)\ra\R$ be a $C^2$ non-negative decreasing and convex function such that $yh''(y)+h'(y)\ge 0, \ y>0,$ and the function $y\ra y^2h''(y)$  decreases. Then the function $F(t,y,v_t(\cdot))$ of (\ref{Y4}) has the property that  the maximum of $F(t,y,v_t(\cdot))$ on the set $0<v_t(\cdot) \le 1$ occurs at $v_t(\cdot)\equiv 1$, and  the minimum of $F(t,y,v_t(\cdot))$ on the set $v_t(\cdot)\ge 1$ also occurs at $v_t(\cdot)\equiv 1$.
\end{theorem}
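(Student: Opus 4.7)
The plan is to reformulate the theorem as an optimal control problem and then reduce both extremality statements to a single pointwise monotonicity property of $F$. Treat $z(s)$, $0\le s\le t$, as the state with ODE $z'(s)=-z(s)/p-v_t(s)$, terminal condition $z(t)=y$, and $v_t(\cdot)>0$ as the control; then $F(t,y,v_t(\cdot))=\int_0^t L(s,z(s),v_t(s))\,ds$ with running cost $L(s,z,v)=\tfrac{v}{p}h(z/v)e^{-(t-s)/p}-(1+y/p)h'(z/v)$. A standard adjoint computation, with costate $\mu(s)$ satisfying $\mu'-\mu/p=-\pa_z L(s,z(s),v_t(s))$ and transversality $\mu(0)=0$ (since $z(0)$ is free while $z(t)=y$ is prescribed), yields
\[
dF(t,y,v_t(\cdot);\tau)\ =\ \pa_v L(\tau,z(\tau),v_t(\tau))-\mu(\tau),
\]
which one verifies agrees termwise with the explicit formula (\ref{AE4}). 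Both claims of the theorem follow from the pointwise monotonicity $v_1(s)\le v_2(s)\ \forall s\Rightarrow F(t,y,v_1)\le F(t,y,v_2)$, since $v_t\equiv 1$ is the pointwise maximum on $\{0<v_t\le 1\}$ and the pointwise minimum on $\{v_t\ge 1\}$. By the convex interpolation $v_\la=(1-\la)v_1+\la v_2$ and the chain rule, this monotonicity reduces to the key lemma
\[
dF(t,y,v_t(\cdot);\tau)\ \ge\ 0 \quad\text{for every admissible $v_t(\cdot)>0$ and every $\tau\in[0,t]$.}
\]

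To prove the key lemma I would set $\Phi(\tau):=dF(t,y,v_t(\cdot);\tau)$ and show that $\Psi(\tau):=e^{-\tau/p}\Phi(\tau)$ is non-negative at $\tau=0$ and non-decreasing on $[0,t]$. The value at $\tau=0$ is immediate from (\ref{AE4}): the two integral terms vanish at $\tau=0$ and the two surviving boundary terms, namely $\tfrac{e^{-t/p}}{p}[h(\phi(0))-\phi(0)h'(\phi(0))]$ and $(1+y/p)\phi(0)h''(\phi(0))/v_t(0)$, are both non-negative since $h$ is non-negative convex with $h'\le 0$ (so in particular $h(\phi)-\phi h'(\phi)\ge h(\phi)\ge 0$) and $h''\ge 0$. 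For the monotonicity of $\Psi$ I would differentiate $\Phi$ in $\tau$ directly from (\ref{AE4}), substitute the ODE for $\phi(s)=z(s)/v_t(s)$, namely $\phi'=-\phi/p-1-\phi(\ln v_t)'$, and regroup. Two structural inequalities provided by the hypotheses then apply: (a) $h'(y)+(1+y/p)yh''(y)\ge 0$, which is the observation of (\ref{M3}) specialised to $h$ and is a direct consequence of $yh''+h'\ge 0$ together with $h''\ge 0$; and (b) $(y^2h''(y))'\le 0$, that is $2yh''(y)+y^2h'''(y)\le 0$, which is the hypothesis that $y^2h''$ is decreasing. Identity (b) is what produces the correct sign for the $(\ln v_t)'$-terms arising from the substitution of $\phi'$, while (a) controls the residual boundary-like contributions.

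The main obstacle is exactly this sign analysis: after substituting the ODE and grouping, $\Psi'(\tau)$ is a sum of several contributions of mixed sign, and showing it is non-negative for every admissible $v_t(\cdot)$ (not only along the reference trajectory $v_t\equiv 1$, where the calculation of (\ref{L4}) already gave non-negativity) requires pairing the coefficients so that inequalities (a) and (b) apply pointwise in $\phi(\tau)$. The optimal-control viewpoint is useful here because $\Phi(\tau)=\pa_v H(\tau,z(\tau),v_t(\tau),\mu(\tau))$ is the $v$-variation of the Pontryagin Hamiltonian, so the inequality $\Psi'\ge 0$ is a Pontryagin-type sufficiency condition holding along arbitrary trajectories rather than only along the extremal. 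I would complete the bookkeeping by expressing $e^{\tau/p}\Psi'(\tau)$ as a sum of a term proportional to $h'(\phi)+(1+y/p)\phi h''(\phi)$ and a term proportional to $-(y^2h''(y))'|_{y=\phi}\cdot(\ln v_t)'$-type factors, at which point hypotheses (a) and (b) give the desired non-negativity and the theorem follows.
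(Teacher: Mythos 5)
You propose to deduce both extremality statements from a single pointwise monotonicity claim --- that the gradient $dF(t,y,v_t(\cdot);\tau)\ge 0$ for \emph{every} admissible $v_t(\cdot)>0$ and every $\tau\in[0,t]$ --- and then to prove this by showing $\Psi(\tau)=e^{-\tau/p}\,dF(t,y,v_t(\cdot);\tau)$ is non-negative at $\tau=0$ and non-decreasing. The observation that $\Psi(0)\ge 0$ is correct, and it is also true that the key lemma, if it held, would yield both parts of the theorem. However, the proposed proof that $\Psi$ is non-decreasing has a genuine gap, and the key lemma itself is a strictly stronger statement than what the paper proves.

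Set $\phi(s)=z(s)/v_t(s)$ and $\al=1/p+(\ln v_t)'$, so that $\phi'=-1-\phi\al$. Differentiating (\ref{AE4}) gives, after substituting the ODE for $\phi$,
\[
\Psi'(\tau)\;=\;\frac{e^{-t/p}}{p}\Bigl[h'(\phi)+\phi h''(\phi)+\phi^2\al\, h''(\phi)\Bigr]
\;-\;\Bigl(1+\frac{y}{p}\Bigr)\frac{e^{-\tau/p}}{v_t(\tau)}\bigl(1+\phi\al\bigr)\bigl[2h''(\phi)+\phi h'''(\phi)\bigr].
\]
Your hypotheses give $h'(\phi)+\phi h''(\phi)\ge 0$ and $2h''(\phi)+\phi h'''(\phi)\le 0$. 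When $\al\ge 0$ (equivalently $\phi'\le 0$), both groupings are favourable and $\Psi'\ge 0$; in particular this recovers (\ref{M3}) along $v_t\equiv 1$, where $\al=1/p$. But $v_t(\cdot)$ is an \emph{arbitrary} positive function: if $v_t$ decreases fast enough that $(\ln v_t)'(\tau)<-1/p-1/\phi(\tau)$, then $\al<0$ and $1+\phi\al<0$, so the term $\phi^2\al h''(\phi)$ is negative and the second bracket has the wrong sign too --- both contributions push $\Psi'$ below zero. The regrouping you describe, into a piece proportional to $h'+(1+y/p)\phi h''$ and a piece proportional to $(y^2h'')'\cdot(\ln v_t)'$, does not fall out of the algebra; the dependence on $\al$ persists through the factor $(1+\phi\al)$ and no assumption on $h$ alone will repair the sign once $\al$ is sufficiently negative.

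The deeper issue is that your key lemma asserts something the paper neither proves nor needs. The paper does not claim $F$ is monotone for the pointwise ordering on controls. It splits $F=\tfrac1p\int_0^t h(z/v)e^{-(t-s)/p}v\,ds-(1+y/p)\int_0^t h'(z/v)\,ds$ and treats each summand as a separate optimal control problem, constructing explicit $C^1$ solutions to the four Hamilton--Jacobi equations of Propositions 7.1--7.4. The extremality of $v\equiv 1$ is then read off from two facts: the value function $q(x,y,t,T)$ is increasing in the initial state $x$, and among admissible controls the constant control $v\equiv 1$ gives the extremal $x(t)$. Crucially, the optimal controls in Propositions 7.2 and 7.4 are \emph{not} the constant control and not even bang--bang --- they are strictly interior controls obtained by integrating the characteristic ODE (\ref{BX7}). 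That the constant control is nevertheless the answer to the free-endpoint minimization is genuinely second-order information about the value function and cannot be captured by showing the first variation is one-signed along arbitrary trajectories. To salvage the monotonicity route you would have to prove $dF\ge 0$ for all $v_t>0$ outright, which your sign analysis does not do and which is not available in the paper.
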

\begin{rem}
We compare the conditions on $h(\cdot)$ in Theorem 7.1 to the conditions on $h(\cdot)$ in Lemma 3.1.  Parallel to (\ref{N3})  we have that
\be \label{DA7}
h(y) \ = \ h_\infty+\int_y^\infty \frac{k(y')}{y'} \ dy' \ , \quad y>0,
\ee
where $k(\cdot)$ is assumed $C^1$ non-negative decreasing. This implies $h(\cdot)$ is $C^2$ non-negative decreasing convex and satisfies $yh''(y)+h'(y)\ge 0, \ y>0$.  Note that (\ref{DA7}) is the $p\ra \infty$ limit of (\ref{N3}). To ensure that the the function $y\ra y^2h''(y)$  decreases we require that the function $k(\cdot)$ also be convex. 
\end{rem}
We carry this out by obtaining the solution to some  optimal control problems \cite{fr}.  Let $y>0,T\in\R$, and consider the linear dynamics 
\be \label{A7}
\frac{dx(t)}{dt} \ = \ -\frac{1}{p}x(t)-v(t) \ ,  \ t<T, \quad x(T) \ = \ y  ,
\ee
with terminal condition $x(T)=y$ and controller $v(\cdot)$. The solution to (\ref{A7}) is evidently given by
\be \label{B7}
x(t) \ = \ e^{(T-t)/p}y+\int_t^Tds \ e^{(s-t)/p} \  v(s) \ .
\ee

Let $g:(0,\infty)\ra\R^+$ be a positive decreasing function and for $y>0, \ t<T,$ define the function
\be \label{C7}
q(x,y,t,T) \ =  \  \max_{0< v(\cdot)\le 1}\left[ \int_t^T ds \  g\left(   \frac{x(s)}{v(s)} \right)  \ \Bigg| \ x(t)=x \ \right] \ ,
\ee
where $x(\cdot)$ satisfies (\ref{A7}) and $(x,t)$ belongs to the {\it reachable set} of the control system. Thus $q(x,y,t,T)$ is defined only for $x$ satisfying 
\be \label{L7}
e^{(T-t)/p}y \ < \ x \ < e^{(T-t)/p}[y+p]-p  \ .
\ee
Letting
\be \label{D7}
q(y,v(\cdot),t,T) \ = \ \int_t^T ds \  g\left(   \frac{x(s)}{v(s)}\right) \ ,
\ee
we have that the gradient $dq$ of $q$ with respect to $v(\cdot)$ is given by
\begin{multline} \label{E7}
dq(y,v(\cdot),t,T;\tau) \ = \ -\frac{x(\tau)}{v(\tau)^2}g'\left(   \frac{x(\tau)}{v(\tau)}\right)+ \\
\int_t^\tau ds \  g'\left(   \frac{x(s)}{v(s)}\right) \frac{1}{v(s)} e^{(\tau-s)/p} \  , \quad t<\tau<T.
\end{multline}
Setting $v(\cdot)\equiv 1$ in (\ref{E7}) then
\be \label{F7}
dq(y,1(\cdot),t,T;\tau) \ = \ -x_p(\tau)g'(x_p(\tau))+ 
\int_t^\tau ds \  g'(x_p(s)) e^{(\tau-s)/p} \  , 
\ee
where $x_p(s)=e^{(T-s)/p}(y+p)-p$.
We have now that
\begin{multline} \label{G7}
\int_t^\tau ds \  g'(x_p(s)) e^{(\tau-s)/p}  \ = \ \frac{e^{-(T-\tau)/p}}{1+y/p}\int_t^\tau ds \  \left(-\frac{d}{ds}\right) g(x_p(s))  \\
= \ \frac{g(x_p(t))-g(x_p(\tau))}{1+x_p(\tau)/p} \ .
\end{multline}
From (\ref{F7}), (\ref{G7}) we have then
\be \label{H7}
dq(y,1(\cdot),t,T;\tau) \ = \ \frac{-x_p(\tau)\left[1+x_p(\tau)/p\right]g'(x_p(\tau))-g(x_p(\tau))+g(x_p(t))}{1+x_p(\tau)/p} \ .
\ee
It follows from (\ref{H7}) that $dq$ is non-negative at $v(\cdot)\equiv1$ provided 
\be \label{I7}
x[1+x/p]g'(x)+g(x)-g(\infty) \ \le \  0 \quad  {\rm for \  all \ } x>0 \ .
\ee
Thus we have that
\be \label{J7}
g(x) \ = \ g(\infty)+\left(\frac{1}{p}+\frac{1}{x}\right)k(x) \ , \quad {\rm where \ } k'(\cdot)\le 0, \ \lim_{x\ra\infty}k(x)=0 \ .
\ee

We have shown that if $g(\cdot)$ is non-negative decreasing and satisfies (\ref{J7}) then $v(\cdot)\equiv1$ is a {\it local} maximum of the functional (\ref{D7}) on the set $\{0<v(\cdot)\le 1\}$.  We shall show under somewhat stronger conditions on $g(\cdot)$, corresponding to taking $p=\infty$ in (\ref{I7}),  that it is also a {\it global} maximum. We do this by obtaining the solution to the Hamilton-Jacobi (HJ) equation for (\ref{C7}), which is given by
\begin{multline} \label{K7}
\frac{\pa q(x,y,t,T)}{\pa t}-\frac{x}{p} \frac{\pa q(x,y,t,T)}{\pa x} \\
+\sup_{0<v<1}\left[ -v\frac{\pa q(x,y,t,T)}{\pa x}   +g\left(\frac{x}{v}\right)\right] \ = \ 0 \ .
\end{multline} 
\begin{proposition}
Assume $g(\cdot)$ is continuous non-negative decreasing, and  also that the function $x\ra x[g(x)-g(\infty)], \ x>0,$ is decreasing. For any $y>0, \ t<T,$ let $(x,t)$ satisfy (\ref{L7}) and $\tau_{x,t}$ be defined by the equation
\be \label{M7}
\exp\left[\frac{\tau_{x,t}}{p}\right] \ = \ e^{T/p}\left(1+\frac{y}{p}\right)-\frac{x}{p}e^{t/p}  \ .
\ee
Then $t<\tau_{x,t}<T$ and the function $q(x,y,t,T)$ of (\ref{C7}) is given by the formula,
\be \label{N7}
q(x,y,t,T) \ = \ [\tau_{x,t}-t]g(\infty)+\int_{\tau_{x,t}}^T ds  \ g(x_p(s)) \ ds \ .
 \ee
\end{proposition}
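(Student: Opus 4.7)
The strategy is Hamilton--Jacobi verification. I will exhibit the right-hand side of (\ref{N7}) as a classical solution of the HJ equation (\ref{K7}) with terminal condition $q(y,y,T,T)=0$, and conclude equality with the value function by combining the standard comparison argument with an explicit maximizing sequence of admissible controls. The intuition is that the optimum is bang-bang: on $(t,\tau_{x,t})$, send $v\to 0^+$ so the trajectory decays freely to $e^{-(s-t)/p}x$ and the integrand $g(x/v)$ approaches $g(\infty)$; on $(\tau_{x,t},T)$, take $v\equiv 1$ so the trajectory follows $x_p(\cdot)$ and lands at $y$ at time $T$. The switching time $\tau_{x,t}$ is exactly that which matches the two pieces of trajectory, and the reachability condition (\ref{L7}) is equivalent to $t<\tau_{x,t}<T$.

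Implicit differentiation of (\ref{M7}) yields $\pa\tau_{x,t}/\pa x=-e^{(t-\tau_{x,t})/p}$ and $\pa\tau_{x,t}/\pa t=-(x/p)e^{(t-\tau_{x,t})/p}$, and (\ref{M7}) itself rearranges to the useful identity $x_p(\tau_{x,t})=xe^{(t-\tau_{x,t})/p}$. Writing $\tau$ for $\tau_{x,t}$ and denoting the candidate by $Q(x,y,t,T)$, differentiation of (\ref{N7}) and use of these identities gives
\begin{equation*}
Q_x\ =\ \frac{x_p(\tau)}{x}[g(x_p(\tau))-g(\infty)],\qquad Q_t\ =\ -g(\infty)+\frac{x_p(\tau)}{p}[g(x_p(\tau))-g(\infty)],
\end{equation*}
so the combination $Q_t-(x/p)Q_x$ collapses telescopically to $-g(\infty)$. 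The HJ equation (\ref{K7}) thus reduces to showing $\sup_{0<v\le 1}[g(x/v)-vQ_x]=g(\infty)$; setting $u=x/v\ge x$, this rearranges to the pointwise inequality
\begin{equation*}
u\,[g(u)-g(\infty)]\ \le\ x_p(\tau)\,[g(x_p(\tau))-g(\infty)]\qquad(u\ge x),
\end{equation*}
which is immediate from the hypothesis that $w\mapsto w[g(w)-g(\infty)]$ is decreasing, since $x_p(\tau)=xe^{(t-\tau)/p}\le x\le u$. The supremum over the open interval $(0,1]$ is not attained, but equality holds in the limit $v\to 0^+$ because $g(x/v)-vQ_x-g(\infty)=O(1/u)\to 0$.

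With HJ verified, the standard argument closes both inequalities. For any admissible $v(\cdot)$ and corresponding trajectory $x(\cdot)$ on $[t,T]$, one differentiates $V(s):=Q(x(s),y,s,T)+\int_t^s g(x(s')/v(s'))\,ds'$ and uses HJ to get $V'(s)\le 0$, so $V(T)\le V(t)$ yields $\int_t^T g(x/v)\,ds\le Q(x,y,t,T)$; that is $q\le Q$. For the reverse $q\ge Q$, I would exhibit the approximating admissible controls $v_n(s)=1/n$ on $(t,\tau_n)$ and $v_n(s)=1$ on $(\tau_n,T)$, where $\tau_n$ is adjusted so the resulting trajectory satisfies $x(T)=y$; a brief matching computation gives $\tau_n\to\tau_{x,t}$, while pointwise $g(nx(s))\to g(\infty)$ and the dominating bound $g(nx(s))\le g(x(s))$ (from monotonicity of $g$) lets dominated convergence conclude $\int_t^T g(x/v_n)\,ds\to Q(x,y,t,T)$. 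The main delicacy is the non-attainment of the HJ supremum on $(0,1]$, which forces one to work with the approximating sequence rather than a bang-bang optimum living in the admissible class; the monotonicity hypothesis of the proposition is exactly what makes that candidate strategy unambiguous and renders the comparison bound tight.
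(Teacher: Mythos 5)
Your proposal is correct and reaches the same conclusion by the same overall route (Hamilton--Jacobi verification with bang-bang controls), but the verification of the HJ inequality is carried out quite differently, and more cleanly. The paper establishes (\ref{S7}) in two stages: first the pointwise reduction (\ref{U7}) to the case $v=1$, and then the inequality (\ref{V7}) via a maximum-principle argument propagating the identity $u\equiv 0$ from the boundary curve $x_p(\cdot)$ through the transport equation (\ref{X7}); note that the computation behind (\ref{X7}) implicitly differentiates $x[g(x)-g(\infty)]$ in $x$, a step that strictly speaking assumes $g\in C^1$ even though the proposition only asserts $g$ continuous and the map $w\mapsto w[g(w)-g(\infty)]$ decreasing. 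You bypass all of this by exploiting the identity $x_p(\tau_{x,t})=xe^{(t-\tau_{x,t})/p}$, which yields the closed form $xQ_x=x_p(\tau_{x,t})[g(x_p(\tau_{x,t}))-g(\infty)]$; the HJ inequality then reduces in one line to $u[g(u)-g(\infty)]\le x_p(\tau_{x,t})[g(x_p(\tau_{x,t}))-g(\infty)]$ for $u\ge x>x_p(\tau_{x,t})$, which is literally the monotonicity hypothesis with no regularity required. You are also more careful than the paper about the fact that the supremum in (\ref{K7}) is approached but not attained at $v=0$, which sits on the boundary of the admissible set $(0,1]$: where the paper simply cites the verification theorem, you exhibit an explicit minimizing-sequence of admissible controls ($v_n=1/n$ then $v_n=1$ with a matched switching time $\tau_n\to\tau_{x,t}$) and pass to the limit by dominated convergence, which rigorously closes the $q\ge Q$ direction. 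In short: same HJ framework, but your verification step is more direct, requires no PDE maximum principle, uses exactly the stated hypotheses, and handles the $v\to 0^+$ degeneracy explicitly rather than by appeal.
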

\begin{proof}
As a possible solution to (\ref{K7}) we consider  trajectories $x(\cdot)$ for (\ref{A7}) starting at $x$ at time $t<T$, with $(x,t)$ satisfying (\ref{L7}). The control is set with $v=0$ until the trajectory hits the curve $x_p(\cdot)$, and then the control is set at $v=1$, so the trajectory continues along $x_p(\cdot)$ until it reaches $y$ at time $T$.  This is a so called {\it bang-bang} control mechanism, which  occurs quite often \cite{fr} in solutions to control problems where the controls are confined to a bounded convex set.  Let $\tau_{x,t}$ be the time the curve $x(\cdot)$ reaches $x_p(\cdot)$.  We have that
\be \label{O7}
e^{(t-\tau)/p}x \ = \ e^{(T-\tau)/p}(y+p)-p \ , \quad \tau \ = \ \tau_{x,t} \ ,
\ee
whence $\tau_{x,t}$ is given by (\ref{M7}). The function $q$ of (\ref{N7})  satisfies the PDE
 \be \label{P7}
 \frac{\pa q(x,y,t,T)}{\pa t}-\frac{x}{p} \frac{\pa q(x,y,t,T)}{\pa x} +g(\infty) \ = \ 0 \ .
 \ee
To see this observe that
\begin{eqnarray} \label{Q7}
\frac{\pa q(x,y,t,T)}{\pa x} \ &=& \ [g(\infty)-g(x_p(\tau_{x,t}))] \frac{\pa \tau_{x,t}}{\pa x} \ , \\
\frac{\pa q(x,y,t,T)}{\pa t} \ &=& \ [g(\infty)-g(x_p(\tau_{x,t}))] \frac{\pa \tau_{x,t}}{\pa t}-g(\infty) \ . \nonumber
\end{eqnarray}
From (\ref{M7}) we see that
\be \label{R7}
 \frac{\pa \tau_{x,t}}{\pa t} \ = \ \frac{x}{p} \frac{\pa \tau_{x,t}}{\pa x} \ ,
\ee
whence (\ref{P7}) follows from (\ref{Q7}), (\ref{R7}). Since  $\pa \tau_{x,t}/\pa x<0$ we see also that $\pa q/\pa x\ge0$.  We conclude from (\ref{P7}) that the function $q$ defined by (\ref{N7}) is a solution of the HJ equation (\ref{K7}) provided
\be \label{S7}
g\left(\frac{x}{v}\right)-g(\infty) \ \le \ v\frac{\pa q(x,y,t,T)}{\pa x}  \quad {\rm for \ } 0<v\le1 \ .
\ee

To prove (\ref{S7}) we first note from (\ref{M7}) that 
\be \label{T7}
 \frac{\pa \tau_{x,t}}{\pa x} \ = \ -\frac{pe^{t/p}}{e^{T/p}(y+p)-xe^{t/p}} \ .
\ee
We see from (\ref{T7}) that as $x$ approaches $x_p(t)$ then $\pa \tau_{x,t}/\pa x$ approaches $-1$.  The condition (\ref{S7}) on $x_p(\cdot)$ becomes then
\be \label{U7}
g\left(\frac{x}{v}\right)-g(\infty) \ \le \  v[g(x)-g(\infty)] \ , \quad x\in \{x_p(s) \ : \ s\le T\} \ , \ 0<v<1.
\ee
Evidently (\ref{U7}) holds provided the function $x\ra x[g(x)-g(\infty)]$ is positive decreasing.  In that case the condition (\ref{S7}) becomes
\be \label{V7}
x[g(x)-g(\infty)] \ \le \ x\frac{\pa q(x,y,t,T)}{\pa x}  \ .
\ee 
We show (\ref{V7}) holds by using the maximum principle. Observe that $w(x,t)=x\pa q(x,y,t,T)/\pa x$ is a solution to the PDE
\be \label{W7}
 \frac{\pa w(x,t)}{\pa t}-\frac{x}{p} \frac{\pa w(x,t)}{\pa x} \ = \ 0 \ .
\ee
Setting $u(x,t)=w(x,t)-x[g(x)-g(\infty)]$ we have from (\ref{W7}) that
\be \label{X7}
 \frac{\pa u(x,t)}{\pa t}-\frac{x}{p} \frac{\pa u(x,t)}{\pa x} \ \le \ 0 \ .
\ee
We have already shown that $u(x,t)= 0$ for $x\in \{x_\infty(s) \ : \ s\le T\}$. Hence by the method of characteristics $u(x,t)\ge 0$ for all $(x,t)$ satisfying $t<T$ and (\ref{L7}).

We have shown that the function (\ref{N7}) is a $C^1$ solution to the HJ equation (\ref{K7}) for the variational problem (\ref{C7}). It follows now from the usual {\it verification theorem} method \cite{fr} that (\ref{N7}), together with its corresponding bang-bang control settings, solves the variational problem. Thus let $x(\cdot)$ be a solution to (\ref{A7}) with controller $v(\cdot)$ satisfying $0<v(\cdot)\le 1$. Then from (\ref{K7})  the function $q(x,y,t,T)$  defined by (\ref{N7}) satisfies
\begin{multline} \label{Y7}
q(x(t),y,t,T) \ = \ -\int_t^T ds \ \frac{d}{ds} q(x(s),y,s,T) \\
 = \ -\int_t^T ds \ \left[\frac{\pa q(x(s),y,s,T)}{\pa t}-\frac{\pa q(x(s),y,s,T)}{\pa x}\left\{\frac{x(s)}{p}+v(s)\right\} \ \right] \\
 \ge  \ \int_t^T ds \  g\left(   \frac{x(s)}{v(s)}\right) \ = \ q(y,v(\cdot),t,T) \ .
\end{multline}  
\end{proof} 
\begin{rem}
Since the function (\ref{N7}) satisfies $\pa q(x,y,t,T)/\pa x\ge 0$, it follows from Proposition 7.1  that the solution of the variational problem  $\max_{0<v(\cdot)\le 1} q(y,v(\cdot),t,T)$ is given by $v(\cdot)\equiv 1$. 
\end{rem}

Next we consider the variational problem analogous to (\ref{C7}) given by 
\be \label{Z7}
q(x,y,t,T) \ =  \  \min_{1\le v(\cdot)<\infty}\left[ \int_t^T ds \  g\left(   \frac{x(s)}{v(s)} \right)  \ \Bigg| \ x(t)=x \ \right] \ ,
\ee
where $x(\cdot)$ satisfies (\ref{A7}) and $(x,t)$ belongs to the reachable set of the control system. Thus $q(x,y,t,T)$ is defined only for $(x,t)$ satisfying 
\be \label{AA7}
e^{(T-t)/p}[y+p]-p \ < \ x \ < \ \infty \ . 
\ee
The Hamilton-Jacobi (HJ) equation for (\ref{Z7})  is given by
\begin{multline} \label{AB7}
\frac{\pa q(x,y,t,T)}{\pa t}-\frac{x}{p} \frac{\pa q(x,y,t,T)}{\pa x} \\
+\inf_{1<v<\infty}\left[ -v\frac{\pa q(x,y,t,T)}{\pa x}   +g\left(\frac{x}{v}\right)\right] \ = \ 0 \ .
\end{multline} 
Note that there are important differences between the HJ equations (\ref{K7}) and (\ref{AB7}). We can write both of them in the form
\be \label{AC7}
\frac{\pa q(x,y,t,T)}{\pa t}-\frac{x}{p} \frac{\pa q(x,y,t,T)}{\pa x} +G\left(x,\frac{\pa q(x,y,t,T)}{\pa x}\right) \  = \ 0,
\ee 
for some function $G(x,\xi)$. In the case of (\ref{K7}) the function $G(x,\xi)$ is {\it convex} in $\xi$, whereas it is {\it concave} in $\xi$ for (\ref{AB7}).  

There are also important differences in the optimal control settings for the variational problems (\ref{C7}) and (\ref{Z7}). We have shown in Proposition 7.1 that the optimum for (\ref{C7}) is given by bang-bang control. For (\ref{Z7}) this is not the case.  To see why we consider the function $G(x,\xi,v)$ defined by
\be \label{AD7}
G(x,\xi,v) \ = \ -v\xi+g\left(\frac{x}{v}\right) \ .
\ee 
We assume the function $v\ra G(x,\xi,v)$ is convex, which is the case provided the function $z\ra -z^2g'(z)$ decreases.  The maximum of $G(x,\xi,v)$ on the interval $0<v<1$ is attained at either $v=0$ or $v=1$, whence we expect the optimal control setting to be bang-bang in the case of (\ref{C7}). The minimum of $G(x,\xi,v)$ on the interval $1<v<\infty$ is attained at  $v=1$ if $\xi\le -xg'(x)$. If $\xi>-xg'(x)$ then the minimizer of $\min_{v\ge 1} G(x,\xi,v)$ is the solution to the equation
\be \label{AE7}
-\xi-\frac{x}{v^2}g'\left(\frac{x}{v}\right) \ =  \ 0 \ ,  \ \  {\rm whence}  \ -\frac{x^2}{v^2}g'\left(\frac{x}{v}\right) \ = \ x\xi \ = \zeta \ .
\ee
A solution to (\ref{AE7}) exists for all $\xi>-xg'(x)$ provided $\lim_{z\ra 0}z^2g'(z)=-\infty$. 
From (\ref{AE7}) it follows that the minimizing $v=v_{\rm min}(x,\xi)=xh(\zeta)$ for some function $h(\cdot)$. The corresponding HJ equation has therefore the form
\be \label{AF7}
\frac{\pa q(x,y,t,T)}{\pa t}+ H\left(x\frac{\pa q(x,y,t,T)}{\pa x}\right) \ = \ 0 \ ,
\ee
where
\be \label{AG7}
H(\zeta) \ = \ -\frac{\zeta}{p}-\zeta h(\zeta)+g\left(\frac{1}{h(\zeta)}\right) \ .
\ee
Note that $\zeta=x\pa q(x,y,t,T)/\pa x$ is constant along characteristics for the HJ equation (\ref{AF7}), whence  it follows from (\ref{AE7}) that $v(\cdot)/x(\cdot)$ is also constant along characteristics.

The considerations of the previous paragraph lead us to propose a solution to (\ref{AB7}). For $s<t<T$ let $x_p(s,t)$ be the solution to the terminal value problem
\be \label{AH7}
\frac{dx_p(s,t)}{ds} \ = \ -\left[\frac{1}{p}+\frac{1}{x_p(t)}\right]x_p(s,t) \ ,  \ s<t<T, \quad x_p(t,t) \ = \ x_p(t) \   .
\ee
Setting $x(s)=x_p(s,t)-x_p(s)$, we see from (\ref{A7}) with $v(\cdot)\equiv 1$ and (\ref{AH7}) that
\be \label{AI7}
\frac{dx(s)}{ds} \ = \ -\left[\frac{1}{p}+\frac{1}{x_p(t)}\right]x(s)-\frac{1}{x_p(t)}\left[x_p(s)-x_p(t)\right] \ ,  \ s<t<T, \quad x(t) \ = \ 0 \   .
\ee
Since the function $s\ra x_p(s)$ is decreasing, it follows from (\ref{AI7}) that $x(s)>0$ for $s<t$.  Hence the trajectory $x_p(s,t), \ s<t,$ lies in the reachable set (\ref{AA7}) for the variational problem (\ref{Z7}).  We can show similarly that the trajectories $x_p(\cdot,t), \ t<T$, do not intersect. Thus for $t_1<t_2<T$ let $x(s)=x_p(s,t_2)-x_p(s,t_1), \ s<t_1$. We have already seen that $x(t_1)>0$, and from (\ref{AH7}) we also have that
\be \label{AJ7}
\frac{dx(s)}{ds} \ = \ -\left[\frac{1}{p}+\frac{1}{x_p(t_1)}\right]x(s)-x_p(s,t_2)\left[\frac{1}{x_p(t_2)}-\frac{1}{x_p(t_1)}\right] \ ,  \ s<t_1 \ .
\ee
Since $x_p(t_1)>x_p(t_2)$ we conclude from (\ref{AJ7}) that $x_p(s,t_2)>x_p(s,t_1),\ s<t_1$.  Since the trajectories $x_p(\cdot,t), \ t<T,$ do not entirely cover the reachable set we complement them with a set of trajectories with terminal point $y$ at time $T$. Thus for $s<T,  0<\la<y$ we define $y_p(s,\la)$ as the solution to
\be \label{AK7}
\frac{dy_p(s,\la)}{ds} \ = \ -\left[\frac{1}{p}+\frac{1}{\la}\right]y_p(s,\la) \ ,  \ s<T, \quad y_p(T,\la) \ = \ y \   .
\ee
If $t<T$ and $x_p(t)<x<x_p(t,T)$ then there exists unique $\tau=\tau_{x,t}$ such that $t<\tau<T$ and $x_p(t,\tau)=x$.  If $x>x_p(t,T)$ then there exists unique $\la=\la_{x,t}$ such that $0<\la<y$ and $y_p(t,\la)=x$.  We define now a function $q(x,y,t,T)$ for $t<T$ and $(x,t)$ satisfying  (\ref{AA7}) by
\begin{multline} \label{AL7}
q(x,y,t,T) \ = \ (\tau_{x,t}-t)g(x_p(\tau_{x,t}))+\int_{\tau_{x,t}}^T g(x_p(s)) \ ds \quad {\rm if \ } x_p(t)<x<x_p(t,T) \ , \\
q(x,y,t,T) \ = \ (T-t)g(\la_{x,t}) \quad {\rm if \ }  x>x_p(t,T) \ .
\end{multline}
\begin{proposition}
Assume $g(\cdot)$ is $C^1$ non-negative decreasing, and  also that the function $z\ra -z^2g'(z), \ z>0,$ is decreasing. For any $y>0, \ t<T,$ let $(x,t)$ satisfy (\ref{AA7}). Then  the function $q(x,y,t,T)$ of (\ref{Z7}) is given by the formula (\ref{AL7}).
\end{proposition}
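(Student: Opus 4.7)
The plan is to follow the verification-theorem strategy used for Proposition 7.1, the new feature being that the running Hamiltonian
$G(x,\xi,v) = -v\xi + g(x/v)$
is now \emph{convex} in $v$ (this is precisely the content of the hypothesis that $z\mapsto -z^2 g'(z)$ is decreasing). Consequently the constrained minimum over $v\ge 1$ in (\ref{AB7}) is attained either at the boundary $v=1$ or at a unique interior critical point determined by (\ref{AE7}), and the proposed optimal control switches smoothly between these two regimes rather than being bang-bang.

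First I would identify the two optimal policies hinted at by formula (\ref{AL7}). In Region 1, namely $x_p(t) < x < x_p(t,T)$, the policy is to follow $x_p(\cdot,\tau_{x,t})$ with control $v(s) = x(s)/x_p(\tau_{x,t})$ from $s=t$ until $s=\tau_{x,t}$, then switch to $v\equiv 1$ along $x_p(\cdot)$ until reaching $y$ at $T$. In Region 2, $x > x_p(t,T)$, the policy is to ride $y_p(\cdot,\lambda_{x,t})$ with control $v(s) = x(s)/\lambda_{x,t}$ all the way to $y$ at time $T$. The observation after (\ref{AE7}), that $v/x$ is conserved along characteristics of the HJ equation, motivates these choices; direct inspection of (\ref{AH7}), (\ref{AK7}) using the monotonicity of $x_p(\cdot,t_0)$ and $y_p(\cdot,\lambda)$ shows $v(s)\ge 1$ throughout, confirming admissibility.

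Next I would show $q$ defined by (\ref{AL7}) is a classical solution of the HJ equation (\ref{AB7}). In each region I would compute $\pa q/\pa t$ and $\pa q/\pa x$ by implicitly differentiating the defining relations $x_p(t,\tau_{x,t}) = x$ and $y_p(t,\lambda_{x,t}) = x$, as was done in (\ref{Q7})--(\ref{R7}). Writing $\xi = \pa q/\pa x$ and $\zeta = x\xi$, the interior stationarity equation (\ref{AE7}) becomes $\zeta = -(x/v^*)^2 g'(x/v^*)$, and the characteristic geometry together with the ODEs (\ref{AH7}), (\ref{AK7}) reveals that the proposed $v^* = x/x_p(\tau_{x,t})$ in Region 1 and $v^* = x/\lambda_{x,t}$ in Region 2 satisfy exactly this relation, with $v^*\ge 1$. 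Convexity of $v\mapsto G(x,\xi,v)$ then guarantees that the infimum in (\ref{AB7}) is attained at this $v^*$ and that the HJ equation holds.

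The main obstacle I anticipate is the $C^1$ matching of the two branches of (\ref{AL7}) across the interior boundary $x=x_p(t,T)$, where $\tau_{x,t}=T$ and $\lambda_{x,t}=y$: the two branches come from different characteristic families, and although $C^0$ matching at the common value $(T-t)g(y)$ is immediate, matching of the first derivatives requires care. The key input is the coincidence $x_p(s,T) = y_p(s,y)$ visible from (\ref{AH7}) and (\ref{AK7}), which forces the two gradient formulas to agree on the interface. Once $C^1$ regularity and the HJ equation are established, the verification step follows the pattern of (\ref{Y7}): for any admissible $v(\cdot)\ge 1$ driving $x(\cdot)$ with $x(t)=x$,
\[
q(x,y,t,T) \;=\; -\int_t^T \frac{d}{ds}\,q(x(s),y,s,T)\,ds \;\le\; \int_t^T g\!\left(\frac{x(s)}{v(s)}\right)ds,
\]
with equality on the proposed optimal trajectory, which identifies $q$ in (\ref{AL7}) with the value function (\ref{Z7}).
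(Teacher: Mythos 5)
Your proposal is correct and follows essentially the same route as the paper's proof: identify the two characteristic families (\ref{AH7}) and (\ref{AK7}), compute $\pa q/\pa x$ and $\pa q/\pa t$ by implicit differentiation of the defining relations $x_p(t,\tau_{x,t})=x$ and $y_p(t,\lambda_{x,t})=x$, verify the interior stationarity condition (\ref{AQ7}) together with $v(x,t)>1$, invoke convexity of $v\mapsto G(x,\xi,v)$ to conclude that $q$ solves the HJ equation (\ref{AB7}), and finish with the verification argument as in (\ref{Y7}). The one place you are more careful than the paper is the explicit $C^1$-matching across $x=x_p(t,T)$, which the paper asserts without proof; your observation that $x_p(\cdot,T)=y_p(\cdot,y)$ forces the two gradient formulas to agree on the interface is exactly the right resolution.
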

\begin{proof}
We first consider the case $x>x_p(t,T)$. The partial derivatives of $\la_{x,t}$ can be computed by using the formula
\be \label{AM7}
\exp\left[\left(\frac{1}{p}+\frac{1}{\la_{x,t}}\right)(T-t)\right]y \ = \ x \  .
\ee
Thus we have that
\be \label{AN7}
\frac{\pa \la_{x,t}}{\pa x} \ = \ -\frac{\la_{x,t}^2}{(T-t)x} \ , \quad \frac{\pa \la_{x,t}}{\pa t} \ = \ -\frac{\la_{x,t}^2}{(T-t)}\left(\frac{1}{p}+\frac{1}{\la_{x,t}}\right) \ .
\ee
It follows from (\ref{AL7}), (\ref{AN7}) that
\begin{multline} \label{AO7}
x\frac{\pa q(x,y,t,T)}{\pa x} \ = \ -g'(\la_{x,t})\la_{x,t}^2  \ , \\
\frac{\pa q(x,y,t,T)}{\pa t} \ = \ -g(\la_{x,t})-g'(\la_{x,t})\la_{x,t}^2 \left(\frac{1}{p}+\frac{1}{\la_{x,t}}\right) \ . 
\end{multline}
Hence  $q$ is a solution to the PDE
\begin{multline} \label{AP7}
\frac{\pa q(x,y,t,T)}{\pa t}-\frac{x}{p} \frac{\pa q(x,y,t,T)}{\pa x} -v(x,t)\frac{\pa q(x,y,t,T)}{\pa x} +g\left(\frac{x}{v(x,t)}\right) \ = \ 0 \ , \\
{\rm where \ \ } \frac{x}{v(x,t)} \ = \ \la_{x,t} \ .
\end{multline}
Note that $v(x,t)>1$ since $\la_{x,t}<y<x$.  We also have that
\be \label{AQ7}
\frac{\pa}{\pa v}\left[-v\frac{\pa q(x,y,t,T)}{\pa x}+g\left(\frac{x}{v}\right) \right] \ = \ 0 \quad {\rm at \ } v=v(x,t) \ .
\ee
Hence, in view of the convexity requirement on $g(\cdot)$, we conclude that $q(x,y,t,T)$ satisfies the HJ equation (\ref{AB7}) in the region $\{(x,t): \ t<T, \ x>x_p(t,T)\}$. 

Next we consider the region $\{(x,t): \ t<T, \ x_p(t)<x<x_p(t,T)\}$.  In that case we have
\be \label{AR7}
\exp\left[\left(\frac{1}{p}+\frac{1}{x_p(\tau_{x,t})}\right)(\tau_{x,t}-t)\right]x_p(\tau_{x,t}) \ = \ x \ .
\ee
Differentiating (\ref{AR7}) with respect to $x$ gives
\be \label{AS7}
\frac{\pa \tau_{x,t}}{\pa x} \ = \ \frac{px_p(\tau_{x,t})^2}{[x_p(\tau_{x,t})+p](\tau_{x,t}-t)x} \ .
\ee
Similarly we have that
\be \label{AT7}
\frac{\pa \tau_{x,t}}{\pa t} \ = \ \frac{x_p(\tau_{x,t})}{(\tau_{x,t}-t)} \ .
\ee
From (\ref{AL7}), (\ref{AS7}) we have that
\begin{multline} \label{AU7}
x\frac{\pa q(x,y,t,T)}{\pa x} \ = \ -(\tau_{x,t}-t)g'(x_p(\tau_{x,t}))\left[\frac{x_p(\tau_{x,t})}{p}+1\right]x\frac{\pa \tau_{x,t}}{\pa x} \\
= \ -x_p(\tau_{x,t})^2g'(x_p(\tau_{x,t})) \ ,
\end{multline}
and also from (\ref{AT7}) that 
\begin{multline} \label{AV7}
\frac{\pa q(x,y,t,T)}{\pa t} \ = \ -g(x_p(\tau_{x,t}))-(\tau_{x,t}-t)g'(x_p(\tau_{x,t}))\left[\frac{x_p(\tau_{x,t})}{p}+1\right]\frac{\pa \tau_{x,t}}{\pa t}  \\
= \ -g(x_p(\tau_{x,t}))-x_p(\tau_{x,t})g'(x_p(\tau_{x,t}))\left[\frac{x_p(\tau_{x,t})}{p}+1\right] \ .
\end{multline}
It follows from (\ref{AU7}), (\ref{AV7}) that  $q$ is a solution to the PDE
\begin{multline} \label{AW7}
\frac{\pa q(x,y,t,T)}{\pa t}-\frac{x}{p} \frac{\pa q(x,y,t,T)}{\pa x} -v(x,t)\frac{\pa q(x,y,t,T)}{\pa x} +g\left(\frac{x}{v(x,t)}\right) \ = \ 0 \ , \\
{\rm where \ \ } \frac{x}{v(x,t)} \ = \ x_p(\tau_{x,t}) \ .
\end{multline}
Note that since $x>x_p(\tau_{x,t})$ we have $v(x,t)>1$ in (\ref{AW7}).  Furthermore, the identity (\ref{AQ7}) also holds. We therefore conclude that $q$ is a solution  to the HJ equation (\ref{AB7}).  Since $q$ is a $C^1$ solution to the HJ equation for $(x,t)$ in the reachable set we can argue as in Proposition 7.1 to show that
the solution to the variational problem (\ref{Z7}) is given by (\ref{AL7}). 
\end{proof}
\begin{rem}
Since the function (\ref{AL7}) satisfies $\pa q(x,y,t,T)/\pa x\ge 0$, it follows from Proposition 7.2  that the solution of the variational problem  $\min_{1<v(\cdot)<\infty} q(y,v(\cdot),t,T)$ is given by $v(\cdot)\equiv 1$. 
\end{rem}
Let $g:(0,\infty)\ra\R^+$ be a non-negative decreasing function and for $y>0, \ t<T$ define the function $q(x,y,t,T)$ by 
\be \label{AX7}
q(x,y,t,T) \ =  \  \max_{0\le v(\cdot)\le 1}\left[ \int_t^T ds \  g\left(   \frac{x(s)}{v(s)} \right)e^{-(T-s)/p}v(s)  \ \Bigg| \ x(t)=x \ \right] \ ,
\ee
where $x(\cdot)$ satisfies (\ref{A7}) and $(x,t)$ belongs to the reachable set (\ref{L7}). 
Letting
\be \label{AY7}
q(y,v(\cdot),t,T) \ = \ \int_t^T ds \  g\left(   \frac{x(s)}{v(s)}\right)e^{-(T-s)/p}v(s) \ ,
\ee
we have that the gradient $dq$ of $q$ with respect to $v(\cdot)$ is given by
\begin{multline} \label{AZ7}
e^{(T-\tau)/p}dq(y,v(\cdot),t,T;\tau) \ = \ -\frac{x(\tau)}{v(\tau)}g'\left(   \frac{x(\tau)}{v(\tau)}\right)+g\left(   \frac{x(\tau)}{v(\tau)}\right) \\
 +\int_t^\tau ds \  g'\left(   \frac{x(s)}{v(s)}\right)   \ , \quad t<\tau<T.
\end{multline}
If $v(\cdot)\equiv 1$ then
\be \label{BA7}
e^{(T-\tau)/p}dq(y,1(\cdot),t,T;\tau) \ = \ -x_p(\tau)g'(x_p(\tau))+g(x_p(\tau))+\int_t^\tau ds \ g'(x_p(s)) \ .
\ee
We have now that
\begin{multline} \label{BB7}
\int_t^\tau ds \ g'(x_p(s)) \ = \ \frac{p}{y+p}\int_t^\tau ds \ e^{-(T-s)/p}\left(-\frac{d}{ds}\right)g(x_p(s)) \\
= \ \frac{1}{y+p}\int_t^\tau ds \ e^{-(T-s)/p}g(x_p(s))+\frac{p}{x_p(t)+p}g(x_p(t))-\frac{p}{x_p(\tau)+p}g(x_p(\tau)) \ .
\end{multline}
Since $g(\cdot)$ is non-negative decreasing, it follows from (\ref{BA7}), (\ref{BB7}) that $v(\cdot)\equiv 1$ is a local maximum for $q(y,v(\cdot),t,T) $ on the set $0<v(\cdot)<1$.

The HJ equation associated with (\ref{AX7}) is given by
\begin{multline} \label{BC7}
\frac{\pa q(x,y,t,T)}{\pa t}-\frac{x}{p} \frac{\pa q(x,y,t,T)}{\pa x} \\
+\sup_{0<v<1}\left[ -v\frac{\pa q(x,y,t,T)}{\pa x}   +e^{-(T-t)/p}g\left(\frac{x}{v}\right)v\right] \ = \ 0 \ .
\end{multline} 
We shall obtain the solution to the variational problem (\ref{AX7}) by producing a $C^1$ solution to the HJ equation (\ref{BC7}). Just as in Proposition 7.1 our solution is given by bang-bang control settings. 
\begin{proposition}
Assume $g(\cdot)$ is continuous non-negative decreasing. For any $y>0, \ t<T,$ let $(x,t)$ satisfy (\ref{L7}) and $\tau_{x,t}$ be defined by (\ref{M7}). 
Then $t<\tau_{x,t}<T$ and the function $q(x,y,t,T)$ of (\ref{AX7}) is given by the formula,
\be \label{BD7}
q(x,y,t,T) \ = \ \int_{\tau_{x,t}}^T ds  \ g(x_p(s))e^{-(T-s)/p} \ ds \ .
 \ee
\end{proposition}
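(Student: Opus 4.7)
The plan is to mirror the verification strategy used in Propositions 7.1 and 7.2: exhibit a bang--bang control whose payoff equals the right-hand side of (\ref{BD7}), verify that (\ref{BD7}) is a $C^1$ solution of the Hamilton--Jacobi equation (\ref{BC7}) on the reachable set, and then close by the standard verification argument.

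The candidate optimal control is $v^{*}(s)=0$ on $(t,\tau_{x,t})$ and $v^{*}(s)=1$ on $(\tau_{x,t},T)$. Under this choice the trajectory is $x(s)=xe^{(t-s)/p}$ on the first segment, which meets $x_p(s)$ precisely at $s=\tau_{x,t}$ (this is the content of (\ref{M7})), and then follows $x_p$ on $(\tau_{x,t},T)$, arriving at $y$ at time $T$. The payoff along $v^{*}$ is $\int_{\tau_{x,t}}^{T} g(x_p(s))e^{-(T-s)/p}\,ds$, matching (\ref{BD7}); the initial $v=0$ segment contributes nothing since $v g(x/v)\to 0$ as $v\to 0^{+}$, using that $g(x/v)\to g(\infty)<\infty$.

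To check that $q$ defined by (\ref{BD7}) satisfies (\ref{BC7}), differentiate using (\ref{T7}) to obtain $\partial\tau_{x,t}/\partial x=-e^{(t-\tau_{x,t})/p}$, whence
\[
\partial_x q\ =\ g(x_p(\tau_{x,t}))e^{-(T-t)/p}\ \ge\ 0,
\]
while (\ref{R7}) gives $\partial_t q=(x/p)\partial_x q$, so the transport part of (\ref{BC7}) vanishes. Substituting $\partial_x q$ into the Hamiltonian of (\ref{BC7}) reduces the supremum to
\[
\sup_{0<v<1} v\,e^{-(T-t)/p}\bigl[g(x/v)-g(x_p(\tau_{x,t}))\bigr].
\]
Since $x_p(\tau_{x,t})=xe^{(t-\tau_{x,t})/p}<x\le x/v$ for every $0<v\le 1$ and $g$ is decreasing, the bracket is non-positive; letting $v\to 0^{+}$ the bracket remains bounded while $v\to 0$, so the supremum equals $0$. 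Thus (\ref{BC7}) holds.

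The verification argument then runs exactly as in the chain (\ref{Y7}) of Proposition 7.1: for any admissible $v(\cdot)$ with trajectory $x(\cdot)$ from $x$ at time $t$ to $y$ at time $T$, the identity $q(x,y,t,T)=-\int_t^T\frac{d}{ds}q(x(s),y,s,T)\,ds$ combined with (\ref{BC7}) yields $q(x,y,t,T)\ge q(y,v(\cdot),t,T)$, with equality along $v^{*}$. The main delicate point is that the maximizer of the Hamiltonian sits at the boundary $v=0$ of the constraint set rather than at an interior stationary point; the optimal control is therefore discontinuous. This is harmless once $q$ is confirmed to be $C^1$ on the open reachable set, which follows immediately from the smoothness of $\tau_{x,t}$ as given implicitly by (\ref{M7}) together with the strict inequalities $t<\tau_{x,t}<T$ one checks at the endpoints of (\ref{L7}).
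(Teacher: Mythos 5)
Your proof is correct and follows the same overall strategy as the paper (exhibit the bang--bang control, verify that (\ref{BD7}) is a $C^1$ solution of the Hamilton--Jacobi equation (\ref{BC7}) on the reachable set, then close via the verification theorem), but the crucial subsidiary step is handled differently and more cleanly. The paper keeps $\partial_x q$ in the form (\ref{BE7}), sets $u(x,t)=\partial_x q - e^{-(T-t)/p}g(x)$, checks that $u$ vanishes on the boundary curve $x=x_p(t)$, derives the transport inequality (\ref{BJ7}) involving $g'$, and then invokes the method of characteristics (maximum principle) to conclude $u\ge 0$, i.e.\ (\ref{BH7}). You instead substitute the closed form $\partial_x\tau_{x,t}=-e^{(t-\tau_{x,t})/p}$ from (\ref{T7})--(\ref{M7}) into (\ref{BE7}) to get $\partial_x q = g(x_p(\tau_{x,t}))e^{-(T-t)/p}$ explicitly, and then the inequality (\ref{BH7}) follows at once from $x_p(\tau_{x,t})=xe^{(t-\tau_{x,t})/p}<x\le x/v$ together with the monotonicity of $g$. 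Besides being shorter, your argument has the small but real advantage of not using $g'$ anywhere, so it matches the stated hypothesis that $g$ is merely continuous (the paper's characteristic argument via (\ref{BJ7}) implicitly assumes differentiability of $g$, an assumption not included in the proposition's statement). The remaining pieces --- that $\partial_t q - (x/p)\partial_x q=0$ via (\ref{R7}), that the initial $v=0$ segment contributes zero payoff because $g(\infty)<\infty$, and that the verification inequality chain as in (\ref{Y7}) closes the argument --- all check out.
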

\begin{proof}
We have from (\ref{BD7}) that
\be \label{BE7}
\frac{\pa q(x,y,t,T)}{\pa x} \ = \ -\exp\left[(T-\tau_{x,t})/p\right]g(x_p(\tau_{x,t})) \frac{\pa \tau_{x,t}}{\pa x} \ , 
\ee
and similarly that
\be \label{BF7}
\frac{\pa q(x,y,t,T)}{\pa t} \ = \ -\exp\left[(T-\tau_{x,t})/p\right]g(x_p(\tau_{x,t})) \frac{\pa \tau_{x,t}}{\pa t} \ .
\ee
It follows from (\ref{R7}), (\ref{BE7}), (\ref{BF7}) that $q$ is a solution to the PDE
\be \label{BG7}
\frac{\pa q(x,y,t,T)}{\pa t}-\frac{x}{p} \frac{\pa q(x,y,t,T)}{\pa x}  \ = \ 0 \ .
\ee
Since $q$  is a solution to (\ref{BG7}) we need only show that
\be \label{BH7}
e^{-(T-t)/p}g\left(\frac{x}{v}\right)v \ \le \ v\frac{\pa q(x,y,t,T)}{\pa x}  \quad {\rm for \ } 0<v<1 \ ,
\ee
in order to prove that $q$ is a solution to the HJ equation (\ref{BC7}). 
This is equivalent to showing that
\be \label{BI7}
u(x,t) \ = \ \frac{\pa q(x,y,t,T)}{\pa x} -e^{-(T-t)/p}g(x) \ \ge \ 0 \ .
\ee
Since $\pa \tau_{x,t}/\pa x$ approaches $-1$ as $x\ra x_p(t)$, it follows from (\ref{BE7}) that $u(x,t)=0$ if $x=x_p(t)$.  We have now that
\be \label{BJ7}
 \frac{\pa u(x,t)}{\pa t}-\frac{x}{p} \frac{\pa u(x,t)}{\pa x}-\frac{1}{p}u(x,t) \ = \frac{x}{p}e^{-(T-t)/p}g'(x) \ \le \ 0 \ .
\ee
It follows by the method of characteristics that $u(x,t)\ge 0$ for all $(x,t)$ satisfying (\ref{L7}). Hence the function $q$ of (\ref{BD7}) is a solution to the HJ equation (\ref{BC7}). Since $q$ is a $C^1$ solution to the HJ equation for $(x,t)$ in the reachable set we argue again as in Proposition 7.1 to show that
the solution to the variational problem (\ref{AX7}) is given by (\ref{BD7}). 
\end{proof}
\begin{rem}
Since the function (\ref{BD7}) satisfies $\pa q(x,y,t,T)/\pa x\ge 0$, it follows from Proposition 7.3  that the solution of the variational problem  $\max_{0<v(\cdot)<1} q(y,v(\cdot),t,T)$, with $q$ as in (\ref{AY7}), is given by $v(\cdot)\equiv 1$. 
\end{rem}
Finally we consider the variational problem analogous to (\ref{AX7}) given by 
\be \label{BK7}
q(x,y,t,T) \ =  \  \min_{1\le v(\cdot)<\infty}\left[ \int_t^T ds \  g\left(   \frac{x(s)}{v(s)} \right)e^{-(T-s)/p}v(s)   \ \Bigg| \ x(t)=x \ \right] \ ,
\ee
where $x(\cdot)$ satisfies (\ref{A7}) and $(x,t)$ belongs to the reachable set (\ref{AA7}) of the control system. The Hamilton-Jacobi (HJ) equation for (\ref{BK7})  is given by
\begin{multline} \label{BL7}
\frac{\pa q(x,y,t,T)}{\pa t}-\frac{x}{p} \frac{\pa q(x,y,t,T)}{\pa x} \\
+\inf_{1<v<\infty}\left[ -v\frac{\pa q(x,y,t,T)}{\pa x}   +e^{-(T-t)/p}g\left(\frac{x}{v}\right)v\right] \ = \ 0 \ .
\end{multline} 
The minimization problem (\ref{BK7}) is trivial in the case of $g(\cdot)$ constant since then the function $q(y,v(\cdot),t,T)$ of (\ref{AY7}) is independent of $v(\cdot)$. In fact if $g(\cdot)\equiv 1$ we have from (\ref{A7}) that
\be \label{BM7}
q(y,v(\cdot),t,T) \ = \ -\int_t^T ds \ e^{-(T-s)/p}\left[\frac{dx(s)}{ds}+\frac{x(s)}{p}\right] \ .
\ee
Evaluating the integral on the RHS of (\ref{BM7}), we conclude that $q(x,y,t,T)=e^{-(T-t)/p}x-y$ for $(x,t)$ in the reachable set (\ref{AA7}). Note that since $\pa q(x,y,t,T)/\pa x=e^{-(T-t)/p}$, the infimum in (\ref{BL7}) is now simply zero. 

In order to solve the HJ equation (\ref{BL7}) for more general $g(\cdot)$,  we consider the function
\be \label{BN7}
G(x,\xi,v,t) \ = \ -v\xi+e^{-(T-t)/p}g\left(\frac{x}{v}\right)v \ .
\ee 
We assume the function $v\ra G(x,\xi,v,t)$ is convex, which is the case provided $g(\cdot)$ is convex. The minimum of $G(x,\xi,v,t)$  on the interval $1<v<\infty$ is attained at $v=1$ if $e^{T-t)/p}\xi\le g(x)-xg'(x)$. If $e^{(T-t)/p}\xi>g(x)-xg'(x)$ then the minimizer of  $\min_{v\ge 1} G(x,\xi,v,t)$ is the solution to the equation 
\be \label{BO7}
g\left(\frac{x}{v}\right)- \frac{x}{v}g'\left(\frac{x}{v}\right) \ = \ e^{(T-t)/p}\xi  \ = \zeta \ .
\ee
A solution to (\ref{BO7}) exists for all $\zeta>g(x)-xg'(x)$ provided $\lim_{z\ra 0}[g(z)-zg'(z)]=\infty$. 
From (\ref{BO7}) it follows that the minimizing $v=v_{\rm min}(x,\xi)=xh(\zeta)$ for some function $h(\cdot)$. The corresponding HJ equation has therefore the form
\be \label{BP7}
\frac{\pa q(x,y,t,T)}{\pa t}+ xe^{-(T-t)/p}H\left(e^{(T-t)/p}\frac{\pa q(x,y,t,T)}{\pa x}\right) \ = \ 0 \ ,
\ee
where
\be \label{BQ7}
H(\zeta) \ = \ -\frac{\zeta}{p}-\zeta h(\zeta)+g\left(\frac{1}{h(\zeta)}\right)h(\zeta) \ .
\ee
From the Hamiltonian equations of motion we have that $\xi=\pa q(x,y,t,T)/\pa x$ evolves along characteristics according to the ODE
\be \label{BR7}
\frac{d\xi(t)}{dt} \ = \ -\frac{\pa}{\pa x}\left[  xe^{-(T-t)/p}H\left(e^{(T-t)/p}\xi\right)\right]  \ = \ -\left[ e^{-(T-t)/p}H\left(e^{(T-t)/p}\xi\right)\right]  \ .
\ee
Setting $\zeta(s)=e^{(T-s)/p}\xi(s)$ we have from (\ref{BQ7}), (\ref{BR7}) that $\zeta(\cdot)$ is a solution to the autonomous ODE
\be \label{BS7}
\frac{d\zeta(s)}{ds}  \ = \ \zeta h(\zeta)-g\left(\frac{1}{ h\left( \zeta\right) }\right)h\left( \zeta\right)\ .
\ee
We can in principle construct a solution to the HJ equation (\ref{BL7}) in the reachable set (\ref{AA7})  by solving (\ref{BS7}).  Thus at a point $[x_p(t),t]$ on the boundary of the reachable set we set $\zeta$ at $[x_p(t),t]$ to be the solution to $1=x_p(t)h(\zeta)$. Then we solve (\ref{BS7}) for times $s<t$ with this value of $\zeta$ as the terminal condition. This gives us the values of the optimal controller along the characteristic, and so we can construct the characteristic by solving (\ref{A7}) for times $s<t$ with terminal condition $x_p(t)$.  

We assume now that the function $g(\cdot)$ is non-negative decreasing and convex. This implies for non-degenerate $g(\cdot)$ that (\ref{BO7}) can be solved uniquely to determine $h(\zeta)$.  In order to implement our strategy for constructing a solution to the HJ equation (\ref{BL7}) we need to make some extra assumptions on $g(\cdot)$. To see what these are  let the function $s\ra\zeta(s,t), \ s<t$ be a solution to (\ref{BS7}) with terminal condition  $x_p(t)h(\zeta(t,t))=1$. The corresponding characteristic equation (\ref{A7}) is then given by 
\be \label{BT7}
\frac{dx(s)}{ds} \ = \ -\frac{x(s)}{p}-x(s)h(\zeta(s,t)) \ .
\ee
We need to have that $x(s)h(\zeta(s,t))>1$ for $s<t$ in order that the optimal controller $v>1$ along the characteristic. We have that
\begin{multline} \label{BU7}
\frac{d}{ds}\left[  x(s)h(\zeta(s,t))  \right] \ = \\
 - x(s)h(\zeta(s,t))\left\{   \frac{1}{p}+h(\zeta(s,t))-h'(\zeta(s,t))\left[  \zeta(s,t)-g\left(\frac{1}{ h\left( \zeta(s,t)\right) }\right)\right]\right\}  \ .
\end{multline}
 We obtain an expression for $h'(\zeta)$ by observing from (\ref{BO7}) that $h(\cdot)$ is a solution to the equation 
\be \label{BV7}
g\left(\frac{1}{ h( \zeta)}\right)-\frac{1}{h(\zeta)}g'\left(\frac{1}{ h( \zeta)}\right) \ = \ \zeta \ .
\ee
On differentiating (\ref{BV7}) we obtain the relation
\be \label{BW7}
\frac{h'(\zeta)}{h(\zeta)^3}g''\left(\frac{1}{ h( \zeta)}\right) \ = \ 1 \ .
\ee
It follows from (\ref{BV7}), (\ref{BW7}) that the RHS of (\ref{BU7}) is negative provided $zg''(z)+g'(z)>0, \ z>0$. Hence we  impose the extra assumption on $g(\cdot)$ that the non-negative function $z\ra -zg'(z)$ is decreasing.

In order to evaluate the function $q(x,y,t,T)$ of (\ref{BK7}) we just need to know the values $z(s)=x(s)/v(s)=1/h(\zeta(s)), \ s<t,$  along the characteristics in the reachable set (\ref{AA7}) which terminate on the curve $x_p(\cdot)$.  It follows from (\ref{BS7}), (\ref{BV7}), (\ref{BW7}) that $z(s)$ is a solution to the ODE
\be \label{BX7}
\frac{dz(s)}{ds} \ = \ \frac{g'(z(s))}{z(s)g''(z(s))} \ .
\ee
Observing that
\be \label{BY7}
-\int^z\frac{z'g''(z')}{g'(z')} \ dz' \ = \  F(z)+{\rm constant} \ ,
\ee
where the function $F(\cdot)$ is given by
\be \label{BZ7}
F(z) \ = \ -z\log[-g'(z)]+\int_{z_0}^z\log[-g'(z')] \ dz' \quad {\rm for \ any \ } z_0>0 \ ,
\ee
we see that the solution to (\ref{BX7}) satisfies $F(z(s))={\rm constant}-s$.  This enables us to obtain $z(s)$ just under the assumption that $g(\cdot)$ is $C^1$. 
\begin{lem}
Assume the function $g:\R^+\ra\R^+$ is  $C^1$, nonnegative decreasing, and that the function $z\ra-zg'(z)$ is also decreasing.  Then $g(\cdot)$ is convex. 
Define $z_\infty=\sup\{z>0: g'(z)<0\}$. Then $F(\cdot)$ is strictly increasing in the  interval $0<z<z_\infty$,  and $F(z_2)-F(z_1)\ge z_2-z_1$ for $0<z_1<z_2<z_\infty$.  In addition one has $\lim_{z\ra z_\infty}F(z)=\infty$. 
\end{lem}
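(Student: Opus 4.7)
My plan rests on the elementary observation that under the stated hypotheses the function $\phi(z) = -zg'(z)$ is continuous, positive, and strictly decreasing on $(0, z_\infty)$, so that $\log\phi$ is continuous and strictly decreasing there, while $-g'(z) = \phi(z)/z$. This substitution converts every question about $F$ into a question about the monotone function $\log\phi$, which is easier to handle since we only have $C^1$ regularity on $g$.

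First I would dispose of convexity. For $0 < a < b < z_\infty$ the identity $g'(b) - g'(a) = \phi(a)/a - \phi(b)/b$ combined with $\phi(a) \ge \phi(b) \ge 0$ and $1/a \ge 1/b$ gives $g'(b) \ge g'(a)$, so $g'$ is non-decreasing on $(0, z_\infty)$, hence $g$ is convex there (and trivially beyond $z_\infty$, where $g'$ vanishes).

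The main step is an integration by parts. Setting $\psi(z) = \log[-g'(z)]$, which is continuous and non-increasing on $(0, z_\infty)$, Riemann--Stieltjes integration by parts gives
\begin{equation*}
F(z_2) - F(z_1) \;=\; -z_2\psi(z_2) + z_1\psi(z_1) + \int_{z_1}^{z_2}\psi(z)\,dz \;=\; -\int_{z_1}^{z_2} z\,d\psi(z).
\end{equation*}
Substituting $\psi = \log\phi - \log z$, so that $d\psi = d\log\phi - dz/z$, I obtain
\begin{equation*}
F(z_2) - F(z_1) \;=\; -\int_{z_1}^{z_2} z\,d\log\phi(z) \;+\; (z_2 - z_1).
\end{equation*}
Since $\phi$ is decreasing, $-d\log\phi$ is a non-negative Borel measure on $[z_1,z_2]$, so the first term is non-negative. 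This yields the inequality $F(z_2) - F(z_1) \ge z_2 - z_1$ and, as an immediate corollary, strict monotonicity of $F$ on $(0, z_\infty)$.

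The limit at $z_\infty$ is then handled by the same identity in two cases. When $z_\infty = \infty$, the estimate $F(z) - F(z_0) \ge z - z_0$ already forces $F(z) \to \infty$. When $z_\infty < \infty$, continuity of $g'$ forces $g'(z_\infty) = 0$ and hence $\log\phi(z) \to -\infty$ as $z \to z_\infty^-$, and the sharper bound
\begin{equation*}
F(z) - F(z_1) \;\ge\; -\int_{z_1}^{z} z'\,d\log\phi(z') \;\ge\; z_1\bigl[\log\phi(z_1) - \log\phi(z)\bigr]
\end{equation*}
diverges to $+\infty$. The one mildly delicate point, and the only real obstacle, is justifying the integration by parts under just $C^1$ hypotheses on $g$: since $\log\phi$ and $\log z$ are each continuous and monotone on every compact subinterval of $(0, z_\infty)$, they induce finite Borel measures there, and the formula above is the standard Riemann--Stieltjes identity applied to the pair $(\psi(z), z)$ with $\psi$ the difference of two such monotone functions.
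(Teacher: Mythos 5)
Your proof is correct and reaches the same inequality as the paper's chain (CB7), namely $F(z_2)-F(z_1)\ge z_1\{\log\phi(z_1)-\log\phi(z_2)\}+(z_2-z_1)$ with $\phi(z)=-zg'(z)$, but it organizes the derivation differently: the paper rewrites $F(z_2)-F(z_1)$ by inserting $\log[-z'g'(z')]-\log z'$ in place of $\log[-g'(z')]$, kills the $z\log z$ boundary contributions by direct integration, and then bounds $\int_{z_1}^{z_2}\log[-z'g'(z')]\,dz'$ below by $(z_2-z_1)\log[-z_2 g'(z_2)]$ using that $\log\phi$ is decreasing, whereas you perform a Riemann--Stieltjes integration by parts to write $F(z_2)-F(z_1)=-\int z\,d\psi$, split $d\psi=d\log\phi-dz/z$, and read off both the lower bound $z_2-z_1$ and the refined bound $-\int z'\,d\log\phi\ge z_1(\log\phi(z_1)-\log\phi(z))$ from positivity of the measure $-d\log\phi$. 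This is a cleaner decomposition of the same computation, and it lets you dispatch the case $z_\infty<\infty$ directly from the refined bound, bypassing the paper's auxiliary claim (CC7) about $\lim_{z\to z_\infty}\bigl[\alpha h(z)-\int_{z_0}^z h\bigr]=\infty$. Two small points worth tightening: the hypothesis gives $-zg'(z)$ decreasing, not necessarily strictly, so you should say $\phi$ (and $\log\phi$) is non-increasing rather than ``strictly decreasing'' --- your argument only needs the associated measure to be non-negative, so nothing breaks; and when you invoke $g'(z_\infty)=0$ for $z_\infty<\infty$, you are implicitly using that $g'$ is non-decreasing (so $g'(z)\to 0^-$) together with continuity of $g'$, which is worth a half-sentence. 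As written the justification of the Stieltjes identity --- one factor $C^1$, the other continuous and monotone --- is exactly right and is the only regularity point that needs comment under $C^1$ hypotheses.
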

\begin{proof}
To see that $g(\cdot)$ is convex we show the function $z\ra g'(z)$ is increasing. Thus  we have
\be \label{CA7}
z_1[g'(z_2)-g'(z_1)]  \ge   z_2g'(z_2)-z_1g'(z_1) \ \ge \  0 \ , \quad {\rm for \ } 0<z_1<z_2 \ .
\ee
We also have that
\begin{multline} \label{CB7}
F(z_2)-F(z_1) \ = \ z_1\log[-z_1g'(z_1)]-z_2\log[-z_2g'(z_2)]+\int_{z_1}^{z_2}\log[-z'g'(z')] \ dz' \\
-z_1\log z_1+z_2\log z_2-\int_{z_1}^{z_2}\log z' \ dz' \ = \ \\
z_1\log[-z_1g'(z_1)]-z_2\log[-z_2g'(z_2)]+\int_{z_1}^{z_2}\log[-z'g'(z')] \ dz'+(z_2-z_1) \\
\ge \ z_1\log[-z_1g'(z_1)]-z_2\log[-z_2g'(z_2)]+(z_2-z_1)\log[-z_2g'(z_2)] +(z_2-z_1) \\
= \ z_1\{\log[-z_1g'(z_1)]-\log[-z_2g'(z_2)]\}+(z_2-z_1) \ \ge \ z_2-z_1 \ .
\end{multline}
Evidently (\ref{CB7}) implies that $\lim_{z\ra z_\infty} F(z)=\infty$ if $z_\infty=\infty$.
To show that $\lim_{z\ra z_\infty} F(z)=\infty$ when $z_\infty<\infty$, it will be sufficient to prove that for an increasing function $h:(0,z_\infty)\ra\R$ such that $\lim_{z\ra z_\infty} h(z)=\infty$, then one also has that
\be \label{CC7}
\lim_{z\ra z_\infty} \left[\al h(z)-\int_{z_0}^z h(z') \ dz' \ \right] \ = \ \infty \quad {\rm for \ any \ } \al>0 \ .
\ee
Once (\ref{CC7}) has been established, we just set $h(z)=-\log[-zg'(z)]$ to conclude that $\lim_{z\ra z_\infty}F(z)=\infty$. To see (\ref{CC7}) we note that the LHS of (\ref{CC7}) is bounded below by $C+\al h(z)/2$ for some constant $C$, whence (\ref{CC7}) follows. 
\end{proof}
We are now in a position to construct the solution to the HJ equation (\ref{BL7}) under the assumption that $g(\cdot)$ satisfies the conditions of Lemma 7.1.  For $0<\la<\la_\infty=\min[z_\infty,y]$ and $s<T$ we define $z_p(s,\la)$ as the unique solution to the equation $F(z_p(s,\la))=F(\la)+T-s$. Corresponding to the function $z_p$ are a set of characteristics $y_p(s,\la)$ defined as solutions to
\be \label{CD7}
\frac{dy_p(s,\la)}{ds} \ = \ -\left[\frac{1}{p}+\frac{1}{z_p(s,\la)}\right]y_p(s,\la) \ ,  \ s<T, \quad y_p(T,\la) \ = \ y \   .
\ee
We define the function $q(x,y,t,T)$ in the region $\{(x,t): \ t<T, \ x>y_p(t,\la_\infty)\}$ by 
\be \label{CE7}
q(x,y,t,T) \ = \ \int_t^T \frac{g(z_p(s,\la_{x,t}))}{z_p(s,\la_{x,t})}y_p(s,\la_{x,t})e^{-(T-s)/p} \ ds \ , \quad {\rm where  \ } y_p(t,\la_{x,t})=x \ .
\ee
Next let $T_\infty=T$ if $z_\infty<y$, and otherwise if $z_\infty<\infty$  the unique solution to the equation $x_p(T_\infty)=z_\infty$.  For $z_\infty<\infty$ and $(x,t)$ in the set
\be \label{CF7}
t<T_\infty\le T, \quad x_p(t)<x<\exp\left[\left\{\frac{1}{p}+\frac{1}{z_\infty}\right\}(T_\infty-t)\right] \max[z_\infty,y]\ ,
\ee
we define $q(x,y,t,T)$ by 
\begin{multline} \label{CG7}
q(x,y,t,T) \ = \  \int_{T_\infty}^T g(x_p(s))e^{-(T-s)/p} \ ds \\
+ g(z_\infty)\left[e^{-(T-t)/p}x-y-p\left\{1-e^{-(T-T_\infty)/p}\right\}\right] \ .
\end{multline}
If $z_\infty\le y$ then (\ref{CE7}) and (\ref{CG7}) define $q(x,y,t,T)$ for all $(x,t)$ in the reachable set (\ref{AA7}).  If $z_\infty>y$ then $T_\infty<T$ and the subset of the reachable set defined by $\{(x,t) \ : \ T_\infty<t< T, \quad x_p(t)<x<y_p(t,y)\}$ lies between the two previously defined regions of the reachable set. For $T_\infty<t< T, \ s<t,$ we define the function $\tilde{z}_p(s,t)$ as the unique solution to the equation $F(\tilde{z}_p(s,t))=F(x_p(t))+t-s$. Corresponding to the function $\tilde{z}_p$ are a set of characteristics $x_p(s,t)$ defined as solutions to
\be \label{CH7}
\frac{dx_p(s,t)}{ds} \ = \ -\left[\frac{1}{p}+\frac{1}{\tilde{z}_p(s,t)}\right]x_p(s,t) \ ,  \ s<t, \quad x_p(t,t) \ = \ x_p(t) \   .
\ee
For any $(x,t)$  satisfying $T_\infty<t< T, \quad x_p(t)<x<y_p(t,y)$,  let $\tau_{x,t}$ be the unique solution to the equation $x_p(t,\tau_{x,t})=x$.  We define the function $q(x,y,t,T)$ by
\begin{multline} \label{CI7}
q(x,y,t,T) \ = \ \int_t^{\tau_{x,t}} \frac{g(\tilde{z}_p(s,\tau_{x,t}))}{\tilde{z}_p(s,\tau_{x,t})}x_p(s,\tau_{x,t})e^{-(T-s)/p} \ ds   \\
+\int^T_{\tau_{x,t}} g(x_p(s))e^{-(T-s)/p} \ ds \ .
\end{multline}

In order to show that $q$ is well defined by the formulas (\ref{CE7}), (\ref{CG7}), (\ref{CI7}) we need to prove that the trajectories $y_p(\cdot,\cdot), \ x_p(\cdot,\cdot)$  lie in the reachable set (\ref{AA7}) and do not intersect.  
\begin{lem}
Assume $g(\cdot)$ satisfies the conditions of Lemma 7.1. Then the trajectories $y_p(s,\la), \ s<T, \ \la<\la_\infty,$ defined by (\ref{CD7}) lie in the reachable set (\ref{AA7}) and do not intersect.  If $T_\infty<T$ then the trajectories $x_p(s,t), \ s<t, \ T_\infty<t<T,$ defined by (\ref{CH7}) also lie in the reachable set (\ref{AA7}) and do not intersect. 
\end{lem}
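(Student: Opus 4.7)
The plan is to prove each of the four assertions (reachable-set membership and non-intersection, for each of the two families) by reducing it to a first-order linear ODE for a comparison function that vanishes at the terminal time, and then extracting the sign via an integrating factor. The linchpin of the whole argument is that the control ratio $v(s):=y_p(s,\lambda)/z_p(s,\lambda)$ (respectively $v(s):=x_p(s,t)/\tilde z_p(s,t)$) exceeds $1$ strictly away from the terminal time. I would establish this first by invoking the already-completed calculation (\ref{BU7})--(\ref{BW7}), which under the Lemma 7.1 hypothesis $zg''(z)+g'(z)\ge 0$ shows that $v(s)$ is strictly decreasing in $s$ along each characteristic. Combined with the terminal value $v(T)=y/\lambda>1$ in the first family (because $\lambda<\la_\infty\le y$) and $v(t)=1$ in the second, this yields $v>1$ on the open interval.

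For membership in the reachable set I compare each trajectory with the lower boundary $b(s):=e^{(T-s)/p}(y+p)-p$, which satisfies $b'(s)=-b(s)/p-1$. The difference $\psi(s):=y_p(s,\la)-b(s)$ then obeys
\[
\psi'(s) \ = \ -\tfrac{1}{p}\psi(s) - (v(s)-1), \qquad \psi(T)=0,
\]
and multiplying by $e^{s/p}$ gives the closed form $\psi(s)=e^{-s/p}\int_s^T e^{\sigma/p}(v(\sigma)-1)\,d\sigma$, strictly positive for $s<T$. The identical argument applied to $\tilde\psi(s):=x_p(s,t)-b(s)$ with $\tilde\psi(t)=x_p(t)-b(t)=0$ handles the $x_p(\cdot,t)$ family.

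Non-intersection within the $y_p$ family falls out of a monotonicity argument: for $\la_1<\la_2<\la_\infty$, the strict monotonicity of $F$ gives $z_p(s,\la_1)<z_p(s,\la_2)$, so $w(s):=y_p(s,\la_1)-y_p(s,\la_2)$ satisfies
\[
w'(s) \ = \ -\Big[\tfrac{1}{p}+\tfrac{1}{z_p(s,\la_1)}\Big]w(s) \,+\, y_p(s,\la_2)\Big[\tfrac{1}{z_p(s,\la_2)}-\tfrac{1}{z_p(s,\la_1)}\Big],
\]
with $w(T)=0$ and strictly negative inhomogeneity; an integrating factor then yields $w(s)>0$ for $s<T$.

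The hard part is non-intersection of the $x_p(\cdot,t)$ family, since one must compare $\tilde z_p(s,t_1)$ and $\tilde z_p(s,t_2)$ for $t_1<t_2$. From $F(\tilde z_p(s,t_i))=F(x_p(t_i))+t_i-s$, the question reduces to signing $F(x_p(t_1))-F(x_p(t_2))-(t_2-t_1)$. Because $x_p'(t)=-(x_p(t)+p)/p\le -1$ one has $x_p(t_1)-x_p(t_2)\ge t_2-t_1$, and Lemma 7.1's sharp inequality $F(z_2)-F(z_1)\ge z_2-z_1$ upgrades this to $F(x_p(t_1))-F(x_p(t_2))\ge t_2-t_1$; hence $\tilde z_p(s,t_2)\le \tilde z_p(s,t_1)$. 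The difference $\eta(s):=x_p(s,t_2)-x_p(s,t_1)$ then satisfies $\eta(t_1)>0$ from the reachable-set step together with a linear ODE whose inhomogeneity $x_p(s,t_1)\big[1/\tilde z_p(s,t_1)-1/\tilde z_p(s,t_2)\big]$ is nonpositive, and an integrating factor closes the argument in the same fashion as in the proof of Proposition 7.2. This is the step that really forces the precise form of the Lemma 7.1 hypothesis on $g$, and I expect the delicate balance between the drift of $x_p$ and the growth of $F$ to be the main obstacle.
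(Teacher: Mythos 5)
Your overall architecture is sound, and your non-intersection arguments (both the $w$-ODE for the $y_p$ family and the $F$-Lipschitz computation for the $x_p$ family) match the paper's in substance. But your membership argument takes a genuinely different route, and that route has a regularity gap relative to the stated hypothesis. You make the ratio $v(s)=y_p(s,\la)/z_p(s,\la)$ the ``linchpin'' and deduce $v>1$ from the monotonicity calculation (\ref{CO7}); but that computation needs $dz_p/ds = g'/(z_p g''(z_p))$, hence $g\in C^2$, and you even label $zg''(z)+g'(z)\ge 0$ a ``Lemma 7.1 hypothesis'' when in fact Lemma 7.1 assumes only that $g$ is $C^1$ with $z\mapsto -zg'(z)$ decreasing (the $C^2$ reformulation appears only in Proposition 7.4). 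The paper instead proves membership by comparing $z_p(s,\la)$ against the boundary $x_p(s)$: from $F(z_p(s_2,\la))-F(z_p(s_1,\la))=s_1-s_2$ and Lemma 7.1's sharp inequality $F(z_2)-F(z_1)\ge z_2-z_1$, one gets $z_p(s_2,\la)-z_p(s_1,\la)\le s_1-s_2 < x_p(s_2)-x_p(s_1)$, whence $z_p(s,\la)<x_p(s)$ (estimate (\ref{CV7})); this makes the inhomogeneity in the ODE (\ref{CU7}) for $x(s)=y_p(s,\la)-x_p(s)$ negative, so $x>0$. That argument never differentiates $z_p$ and so works under the stated $C^1$ hypothesis.

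In other words, you compare $y_p$ to $z_p$ (via $v>1$, needing $C^2$), while the paper compares $z_p$ to $x_p$ (via the $F$-Lipschitz bound, needing only $C^1$) and lets the same ODE integrating-factor step finish. Since Proposition 7.4 does assume $g\in C^2$ on $(0,z_\infty)$, your version of Lemma 7.2 would still serve the paper's downstream purpose, so the gap is harmless in context --- but as a proof of Lemma 7.2 as stated it is not complete. If you want to repair it while keeping your structure, replace the $v>1$ step by the direct comparison $z_p(s,\la)<x_p(s)$ from (\ref{CV7}); your $\psi$-ODE then still closes since $\psi'(s)=-\psi(s)/p-\big(y_p(s,\la)/z_p(s,\la)-1\big)$ and $y_p\ge x_p+\psi> z_p$ bootstraps, or more simply just use the paper's form $x(s)=y_p(s,\la)-x_p(s)$ with inhomogeneity $1-x_p(s)/z_p(s,\la)<0$.
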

\begin{proof}
In view of the monotonicity of the function $F(\cdot)$ defined by (\ref{BZ7}), it is clear from (\ref{CD7}) that the trajectories $y_p(\cdot,\la), \ 0<\la<\la_\infty,$ do not intersect. To prove that $y_p(\cdot,\la)$ lies in the reachable set we set $x(s)=y_p(s,\la)-x_p(s)$ and observe that $x(\cdot)$ satisfies the equation
\be \label{CU7}
\frac{dx(s)}{ds} \ = \ -\left[\frac{1}{p}+\frac{1}{z_p(s,\la)}\right]x(s)-\frac{x_p(s)}{z_p(s,\la)}+1 \ , \quad s<T, \ x(T)=0 \ .
\ee
Hence the function $x(\cdot)$ is non-negative provided $z_p(s,\la)<x_p(s), \ s<T$.  This follows from Lemma 7.1 since
\begin{multline} \label{CV7}
z_p(s_2,\la)-z_p(s_1,\la) \ \le F(z_p(s_2,\la))-F(z_p(s_1,\la)) \\
 = \ s_1-s_2 \ < x_p(s_2)-x_p(s_1) \ , \quad s_2<s_1<T \ .
\end{multline}

We can similarly see that the solution $x_p(\cdot,t)$ to (\ref{CH7}) lies in the reachable set since $\tilde{z}_p(s,t)<x_p(s),  \ s<t$.  To see that the trajectories  $x_p(\cdot,t), \ T_\infty<t<T,$ do not intersect  we consider for $T_\infty<t_1<t_2<T$ the function $x(s)=x_p(s,t_2)-x_p(s,t_1), \ s<t_1$. From (\ref{CH7}) it follows that $x(\cdot)$ is a solution to the equation
\be \label{CW7}
\frac{dx(s)}{ds} \ = \ -\left[\frac{1}{p}+\frac{1}{\tilde{z}_p(s,t_1)}\right]x(s)-\left[\frac{1}{\tilde{z}_p(s,t_2)}-\frac{1}{\tilde{z}_p(s,t_1)}\right]x_p(s,t_2)  \ .
\ee
Since the trajectory $x_p(\cdot,t_2)$ is in the reachable set we have that $x(t_1)>0$.  We also have that $\tilde{z}_p(t_1,t_2)<x_p(t_1)=\tilde{z}_p(t_1,t_1)$, whence by the monotonicity of $F(\cdot)$ it follows that $\tilde{z}_p(s,t_2)< \tilde{z}_p(s,t_1), \ s<t_1$. We conclude then from (\ref{CW7}) that $x(s)>0, \ s<t_1$. 
\end{proof}
\begin{proposition}
Assume $g(\cdot)$ satisfies the conditions of Lemma 7.1, and in addition that $g(\cdot)$ is $C^2$ on the interval $(0,z_\infty)$.  For any $y>0, \ t<T$ let $(x,t)$ satisfy  (\ref{AA7}).  Then the function $q(x,y,t,T)$ of (\ref{BK7}) is given by one of the formulae (\ref{CE7}), (\ref{CG7}), (\ref{CI7}).  
\end{proposition}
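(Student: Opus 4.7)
The plan is to carry out the verification-theorem strategy used in Propositions 7.1--7.3. First I would define $q$ piecewise by (\ref{CE7}), (\ref{CG7}), (\ref{CI7}), together with feedback controls $v(x,t)=x/z_p(t,\lambda_{x,t})$, $v(x,t)=x/z_\infty$, and $v(x,t)=x/\tilde{z}_p(t,\tau_{x,t})$ on the three regions respectively. The goal is to establish (i) that $q$ is $C^1$ on the reachable set (\ref{AA7}), (ii) that in each region $q$ solves the HJ equation (\ref{BL7}) with the indicated feedback attaining the infimum, and (iii) that the feedback controls satisfy $v\ge 1$. Granted (i)--(iii), for any admissible control $v(\cdot)$ with trajectory $x(\cdot)$ starting at $(x,t)$, the identity $q(x,y,t,T)=-\int_t^T (d/ds)q(x(s),y,s,T)\,ds$ combined with the HJ inequality $\partial_s q-(x/p)\partial_x q \ge -[-v\partial_x q+e^{-(T-s)/p}g(x/v)v]$ yields $q(x,y,t,T)\le q(y,v(\cdot),t,T)$, with equality under the feedback.

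I would verify the HJ equation region by region. In Region A ($x>y_p(t,\lambda_\infty)$, formula (\ref{CE7})), implicit differentiation of $y_p(t,\lambda_{x,t})=x$ using the ODE (\ref{CD7}) and the identity $F(z_p(s,\lambda))=F(\lambda)+T-s$ expresses $\partial q/\partial x$ and $\partial q/\partial t$ in closed form; substituting $v=x/z_p(t,\lambda_{x,t})$ into the Hamiltonian (\ref{BN7}) and applying (\ref{BV7})--(\ref{BW7}) reduces (\ref{BL7}) to the tautology that this $v$ attains the infimum. Admissibility $v\ge 1$ follows from the inequality $z_p(s,\lambda_{x,t})<x_p(s)\le y_p(s,\lambda_{x,t})$ along the characteristic, which is established in the proof of Lemma 7.2. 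In Region B (formula (\ref{CG7})), direct computation gives $\partial q/\partial x=g(z_\infty)e^{-(T-t)/p}$ and $\partial q/\partial t-(x/p)\partial q/\partial x=0$, so (\ref{BL7}) reduces to $\inf_{v\ge 1}v[g(x/v)-g(z_\infty)]e^{-(T-t)/p}=0$; since $g$ is constant on $[z_\infty,\infty)$ by the definition of $z_\infty$, the infimum is attained and equals zero at $v=x/z_\infty\ge 1$, which holds precisely for $(x,t)$ in the set (\ref{CF7}). Region C (formula (\ref{CI7})) is handled identically to Region A using the characteristics $x_p(\cdot,t)$ of (\ref{CH7}) in place of $y_p(\cdot,\lambda)$.

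The hard part will be the $C^1$ matching of $q$ across the boundary curves separating the three regions. Along $x=y_p(t,\lambda_\infty)$, which separates Region A from Regions B or C, the parameter $\lambda_{x,t}\to\lambda_\infty$; continuous dependence of (\ref{CD7}) on terminal data, combined with the fact that $z_p(\cdot,\lambda_\infty)$ degenerates either to the constant $z_\infty$ (when $\lambda_\infty=z_\infty\le y$) or to the boundary characteristic terminating at $z_\infty$ (when $\lambda_\infty=y<z_\infty$), forces both $q$ and $\partial q/\partial x$ to match. Along $x=x_p(t,T_\infty)$ (the interface between Regions C and B in the case $z_\infty>y$), a similar argument with $\tau_{x,t}\to T_\infty$ shows that $\tilde{z}_p(\cdot,\tau_{x,t})\to z_\infty$ uniformly on compact subsets of $(-\infty,T_\infty]$, and again $q$ and $\partial q/\partial x$ agree on the two sides. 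Once $C^1$ regularity is secured, the verification argument parallels exactly the one at the end of the proof of Proposition 7.3 and completes the proof.
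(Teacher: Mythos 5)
Your plan is structurally the same as the paper's: build a $C^1$ solution to the HJ equation (\ref{BL7}) piecewise on the three subregions, show the candidate feedback is admissible ($v\ge 1$), and close with the verification identity exactly as at the end of Propositions 7.1--7.3. Your admissibility argument in Region A is a slight variant that actually improves on the paper: you read $v=x/z_p(t,\lambda_{x,t})>1$ directly off the chain $z_p(s,\lambda)<x_p(s)\le y_p(s,\lambda)$, both halves of which Lemma 7.2 already supplies, whereas the paper re-derives it by checking that $s\mapsto y_p(s,\lambda)/z_p(s,\lambda)$ is monotone (equation (\ref{CO7})). Your Region B analysis agrees with the paper's.

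The genuine gap is in your treatment of Region A (and, mutatis mutandis, Region C). You assert that implicit differentiation of $y_p(t,\lambda_{x,t})=x$, the ODE (\ref{CD7}), and the relation $F(z_p(s,\lambda))=F(\lambda)+T-s$ give $\partial q/\partial x$ ``in closed form,'' after which (\ref{BV7})--(\ref{BW7}) make the HJ equation a tautology. But implicit differentiation only produces $\partial\lambda_{x,t}/\partial x$ and $\partial\lambda_{x,t}/\partial t$; plugging these into (\ref{CE7}) still leaves $\partial q/\partial x$ as a full integral over $[t,T]$ of $\lambda$-derivatives of the integrand, not yet the boundary expression $e^{-(T-t)/p}\bigl[g(z_p(t,\lambda_{x,t}))-z_p(t,\lambda_{x,t})g'(z_p(t,\lambda_{x,t}))\bigr]$ that the first-order condition (\ref{BO7}) needs. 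That identity is precisely (\ref{CN7}), and establishing it is the technical heart of the proposition: the paper introduces $V(t,\lambda)$ in (\ref{CP7}), notes $V(T,\cdot)\equiv 0$, and shows $\partial V/\partial t\equiv 0$ via the computation (\ref{CR7}) -- in effect a conservation law carried along the characteristics (\ref{CD7}). Without this step (or an equivalent integration by parts along the characteristic flow) the HJ verification in Regions A and C is incomplete. It also affects your $C^1$-matching plan: once (\ref{CN7}) and its Region-C analogue (\ref{CZ7}) are in hand, the $C^1$ matching at the interfaces is immediate because the two sides give the same explicit boundary value for $\partial q/\partial x$, whereas your proposed ``limit of characteristics'' argument is circumventing a computation that, once done, dissolves the difficulty. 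Amend the plan to include the $V(t,\lambda)\equiv 0$ computation and the rest goes through.
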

\begin{proof}
We show that the function $q(x,y,t,T)$ defined by   (\ref{CE7}), (\ref{CG7}), (\ref{CI7}) is a $C^1$ solution to the HJ equation (\ref{BL7}). First we consider $(x,t)$ in the region (\ref{CF7})  where $q(x,y,t,T)$ is defined by (\ref{CG7}).  Evidently $\pa q(x,y,t,T)/\pa x=g(z_\infty)e^{-(T-t)/p}$ and $q(x,y,t,T)$ is a solution to the PDE (\ref{BG7}). Since $\inf_{x>0}g(x)=g(z_\infty)$ and $x>z_\infty$, it follows that the infimum in (\ref{BL7}) is zero. Hence $q(x,y,t,T)$ is a solution to the PDE (\ref{BL7}) for $(x,t)$ in the region (\ref{CF7}). 

Next we consider the formula (\ref{CE7}) for $q(x,y,t,T)$.  In order to obtain formulas for the derivatives of  $q$ we first need formulas for the derivatives of $z_p(s,\la)$ and $y_p(s,\la)$ with respect to $\la$.  Since the function $z\ra-zg'(z)$ is decreasing and $g(\cdot)$ is $C^2$ we have that $zg''(z)+g'(z)\ge 0$. This implies that $g''(z)>0$ for $0<z<z_\infty$.  From (\ref{BZ7}) and the definition of $z_p(s,\la)$ we have that
\be \label{CJ7}
\frac{z_p(s,\la)g''(z_p(s,\la))}{g'(z_p(s,\la))}\frac{\pa z_p(s,\la)}{\pa \la} \ = \ \frac{\la g''(\la)}{g'(\la)} \ .
\ee
Hence $z_p(\cdot,\cdot)$ is $C^1$ and the function $\la\ra \pa z_p(s,\la)/\pa \la$ is positive for $s<T, \ 0<\la<\la_\infty$.  From (\ref{CD7}), (\ref{CE7}) we have that
\be \label{CK7}
\exp\left[\int_t^T\left\{\frac{1}{p}+\frac{1}{z_p(s,\la_{x,t})}\right\} \ ds\right]y \ = \ x \ .
\ee
In view of the positivity of the function $(s,\la)\ra \pa z_p(s,\la)/\pa \la$, it follows from (\ref{CK7})  that the function $(x,t)\ra\la_{x,t}$ is $C^1$ and
\be \label{CL7}
\frac{\pa \la_{x,t}}{\pa t} \ = \ \left[\frac{x}{p}+\frac{x}{z_p(t,\la_{x,t})}\right]\frac{\pa \la_{x,t}}{\pa x} \ .
\ee
Hence the function $(x,t)\ra q(x,y,t,T)$ of (\ref{CE7}) is $C^1$. Furthermore, on differentiating (\ref{CE7}) with respect to $x$ and $t$ and using (\ref{CL7}) we conclude that $q$ is a solution to the PDE
\begin{multline} \label{CM7}
\frac{\pa q(x,y,t,T)}{\pa t}-\frac{x}{p} \frac{\pa q(x,y,t,T)}{\pa x} -v(x,t)\frac{\pa q(x,y,t,T)}{\pa x} \\
 +e^{-(T-t)/p}g\left(\frac{x}{v(x,t)}\right)v(x,t) \ = \ 0 \ , \quad
{\rm where \ \ } \frac{x}{v(x,t)} \ = \ z_p(t,\la_{x,t}) \ .
\end{multline}

From (\ref{CM7}) we see that in order to prove $q(x,y,t,T)$ is a solution to the HJ equation (\ref{BL7}) it is sufficient to show that $v(x,t)$ of (\ref{CM7})  satisfies
\be \label{CN7}
v(x,t)>1, \quad g(z_p(t,\la_{x,t}))-z_p(t,\la_{x,t})g'(z_p(t,\la_{x,t})) \ = \ e^{(T-t)/p}\frac{\pa q(x,y,t,T)}{\pa x} \ .
\ee
Note that the identity in (\ref{CN7}) is the same as (\ref{BO7}). To show that $v(x,t)>1$ we proceed as in (\ref{BU7}).  Thus we have from the definition of $z_p(s,\la)$ and  (\ref{CD7})  that
\be \label{CO7}
\frac{d}{ds}\left[\frac{y_p(s,\la)}{z_p(s,\la)}\right] \ = \ -\frac{y_p(s,\la)}{z_p(s,\la)}\left[\frac{1}{p}+\frac{1}{z_p(s,\la)}+\frac{g'(z_p(s,\la))}{z_p(s,\la)^2g''(z_p(s,\la))}\right] \ .
\ee
Since $zg''(z)+g'(z)\ge 0$ the RHS of (\ref{CO7}) is negative and $y_p(T,\la)/z_p(T,\la)=y/\la\ge y/\la_\infty\ge 1$. Setting $\la=\la_{x,t}$ we conclude that $v(x,t)=y_p(t,\la_{x.t})/z_p(t,\la_{x,t})>1$. To establish the identity of (\ref{CN7}) we define a function $V(\cdot,\cdot)$ by
\be \label{CP7}
V(t,\la) \ = \ \frac{\pa}{\pa \la} q(y_p(t,\la),y,t,T)-\xi(t,\la)\frac{\pa y_p(t,\la)}{\pa \la} \ , \quad t<T, \ 0<\la<\la_\infty \ ,
\ee
where $q$ is given by (\ref{CE7})  and
\be \label{CQ7}
\xi(t,\la) \ = \ e^{-(T-t)/p}\left[g(z_p(t,\la))-z_p(t,\la)g'(z_p(t,\la))\right] \ .
\ee
Evidently $V(T,\cdot)\equiv 0$.  We also have that
\begin{multline} \label{CR7}
\frac{\pa V(t,\la)}{\pa t} \ = \ -\frac{\pa}{\pa \la}\left[\frac{g(z_p(t,\la))}{z_p(t,\la)}y_p(t,\la)e^{-(T-t)/p}\right] \\
-\frac{\pa \xi(t,\la)}{\pa t}\frac{\pa y_p(t,\la)}{\pa \la} +\xi(t,\la)\frac{\pa}{\pa \la} \left[  \left\{\frac{1}{p}+\frac{1}{z_p(t,\la)}\right\}y_p(t,\la)        \right] \ ,
\end{multline}
where we have used (\ref{CD7}).  The RHS of (\ref{CR7}) can be written as $A(t,\la)y_p(t,\la)+B(t,\la)\pa y_p(t,\la)/\pa \la$. It is evident that $A(\cdot,\cdot)\equiv 0$. Using the formula (\ref{BX7}) for $\pa z_p(t,\la)/\pa t$ and (\ref{CQ7}) we can also see that $B(\cdot,\cdot)\equiv 0$.  Hence $V(\cdot,\cdot)\equiv 0$ and so the identity of (\ref{CN7}) holds.  

Finally we consider the formula (\ref{CI7}) for $q(x,y,t,T)$. Similarly to (\ref{CK7}) we have from  (\ref{CH7})  that
\be \label{CS7}
\exp\left[\int_t^{\tau_{x,t}}\left\{\frac{1}{p}+\frac{1}{\tilde{z}_p(s,\tau_{x,t})}\right\} \ ds\right]x_p(\tau_{x,t}) \ = \ x \ .
\ee
On differentiating (\ref{CS7}) with respect to $x,t$ we see that the function $(x,t)\ra\tau_{x,t}$ is $C^1$  provided the function $(s,\tau)\ra \pa \tilde{z}_p(s,\tau)/\pa \tau$ is negative. To see this we differentiate the equation $F(\tilde{z}_p(s,\tau))=F(x_p(\tau))+\tau-s$ with respect to $\tau$ to obtain the identity
\be \label{CT7}
F'(\tilde{z}_p(s,\tau))\frac{\pa \tilde{z}_p(s,\tau))}{\pa \tau} \ = \ F'(x_p(\tau)) \frac{dx_p(\tau))}{d\tau}+1 \ .
\ee
From Lemma 7.1 we have that $F'(\cdot)\ge 1$ and from (\ref{A7}) that $x'_p(\tau)<-1$, whence the RHS of (\ref{CT7}) is strictly negative. It follows easily now on differentiation of (\ref{CS7})  that 
\be \label{CX7}
\frac{\pa \tau_{x,t}}{\pa t} \ = \ \left[\frac{x}{p}+\frac{x}{\tilde{z}_p(t,\tau_{x,t})}\right]\frac{\pa \tau_{x,t}}{\pa x} \ .
\ee
Since the function $(x,t)\ra\tau_{x,t}$ is $C^1$,  it follows that the function $(x,t)\ra q(x,y,t,T)$ of (\ref{CI7}) is $C^1$. Furthermore, on differentiating (\ref{CI7}) with respect to $x$ and $t$ and using (\ref{CX7}) we conclude that $q$ is a solution to the PDE
\begin{multline} \label{CY7}
\frac{\pa q(x,y,t,T)}{\pa t}-\frac{x}{p} \frac{\pa q(x,y,t,T)}{\pa x} -v(x,t)\frac{\pa q(x,y,t,T)}{\pa x} \\
 +e^{-(T-t)/p}g\left(\frac{x}{v(x,t)}\right)v(x,t) \ = \ 0 \ , \quad
{\rm where \ \ } \frac{x}{v(x,t)} \ = \ \tilde{z}_p(t,\tau_{x,t}) \ .
\end{multline}

From (\ref{CY7}) we see that in order to prove $q(x,y,t,T)$ is a solution to the HJ equation (\ref{BL7}) it is sufficient to show that $v(x,t)$ of (\ref{CY7})  satisfies
\be \label{CZ7}
v(x,t)>1, \quad g(\tilde{z}_p(t,\tau_{x,t}))-\tilde{z}_p(t,\tau_{x,t})g'(\tilde{z}_p(t,\tau_{x,t})) \ = \ e^{(T-t)/p}\frac{\pa q(x,y,t,T)}{\pa x} \ .
\ee
We can establish (\ref{CZ7}) by arguing as before, replacing $y_p(s,\la)$ by $x_p(s,\tau)$ and $z_p(s,\la)$ by $\tilde{z}_p(s,\tau)$ in (\ref{CO7})-(\ref{CR7}). Hence the function $q(x,y,t,T)$ is a $C^1$ solution to the HJ equation (\ref{BL7}). The fact that it is also $C^1$ across the boundaries of the various regions follows from 
(\ref{CN7}), (\ref{CZ7}). 
\end{proof}
\begin{rem}
Let $q(y,v(\cdot),t,T)$ be defined by (\ref{AY7}). Since the function defined by the formulae (\ref{CE7}), (\ref{CG7}), (\ref{CI7}) satisfies $\pa q(x,y,t,T)/\pa x\ge 0$, it follows from Proposition 7.4  that the solution of the variational problem  $\min_{1<v(\cdot)<\infty} q(y,v(\cdot),t,T)$ is given by $v(\cdot)\equiv 1$. 
\end{rem}
\begin{proof}[Proof of Theorem 7.1]
We apply propositions 7.1, 7.2 with $g(\cdot)=-(1+y/p)h'(\cdot)$ and propositions 7.3,7.4 with $g(\cdot)=p^{-1}h(\cdot)$.
\end{proof}

\vspace{.2in}

\section{Global Asymptotic Stability}
In this section we shall prove Theorem 1.2 by generalizing the results of $\S6$ for the linear DDE (\ref{N6})  to the non-linear DDE (\ref{B4}). Observe that Theorem 1.1 implies that $\sup_{t\ge 0}\|\xi(\cdot,t)\|_{2,\infty}\le M$ for some $M>0$.  Hence from the properties (c),(d) of the functional $I(\cdot)$, we conclude that $c_M\le I(\xi(\cdot,t))\le I(0(\cdot)), \ t\ge 0$.   This is a non-linear version of the result of Lemma 6.1. Next we establish a non-linear version of Proposition 6.2.
\begin{proposition}
Assume that $h(\cdot), \ \xi(\cdot,0)$ and $I(\cdot)$ satisfy the conditions of Theorem 1.1, and in addition that $h(\cdot)$ satisfies the conditions of Theorem 7.1. 
Then the initial value problem for (\ref{AH4}), (\ref{AI4}) with given $I(0)>0$ has a unique $C^1$ solution $I(t), \ t\ge 0$, globally in time.  Furthermore, $I(t)$ converges as $t\ra\infty$ to some $I_\infty>0$.
\end{proposition}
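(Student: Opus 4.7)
The argument naturally splits into global existence and asymptotic convergence. For global existence, I would invoke Theorem 1.1, whose hypotheses are contained in those of the proposition: this furnishes a unique global $C^2$ solution $\xi(\cdot,t)$ of (\ref{A1}) with $\sup_{t\ge 0}\|\xi(\cdot,t)\|_{2,\infty}\le M$ and $\inf_{t\ge 0}\{pI(\xi(\cdot,t))+[dI(\xi(\cdot,t)),A\xi(\cdot,t)]\}\ge m>0$. Setting $I(t)=I(\xi(\cdot,t))$ then produces a $C^1$ solution of (\ref{B4}), equivalently of (\ref{AH4})--(\ref{AI4}); properties $(c),(d)$ of $I$ give $c_M\le I(t)\le I(0(\cdot))$, and uniqueness is provided by Lemma 4.1.

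The key new ingredient for the asymptotic part is a one-sided sign on $f(t,v_t(\cdot))$ coming from Theorem 7.1. Writing $J(t)=\log I(t)$, the definition of $v_t$ reads $v_t(s)=\exp([J(s)-J(t)]/p)$, so $v_t(\cdot)\le 1$ on $[0,t]$ iff $J(t)=\sup_{[0,t]}J$. When this holds, Theorem 7.1 gives $F(t,y,v_t(\cdot))\le F(t,y,1(\cdot))$ pointwise in $y$; since $dI(\xi(\cdot,t))\le 0$ is $L^1$ by properties $(b),(d)$, the numerator of $f$ in (\ref{AI4}) is non-negative and the denominator is $\ge m$, whence $f(t,v_t(\cdot))\ge 0$. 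Dually $f(t,v_t(\cdot))\le 0$ at running minima of $J$. Moreover (\ref{AA*4}), together with $\|\xi(\cdot,0)\|_{2,\infty}<\infty$ and $\sup_y\{y|h'(y)|+y^2|h''(y)|\}<\infty$, gives $|g(t,v_t(\cdot))|\le Ce^{-t/p}$; substituted into (\ref{AH4}) these yield
\begin{align*}
\tfrac{1}{p}J'(T) &\le Ce^{-T/p} \quad \text{whenever } J(T)=\sup_{[0,T]} J,\\
\tfrac{1}{p}J'(T) &\ge -Ce^{-T/p} \quad \text{whenever } J(T)=\inf_{[0,T]} J.
\end{align*}

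To upgrade these one-sided bounds to full convergence I would adapt the dichotomy used in Proposition 6.2. Let $J^+=\limsup J(t)$ and $J^-=\liminf J(t)$, both finite. Assuming $J^+>J^-$ for contradiction, I would split into two cases. In Case (i), the oscillation of $J$ on $[T-\gamma,T]$ is arbitrarily small for arbitrarily large $T$; direct integration of (\ref{AH4}) from such a $T$ onward, combined with the exponential decay of $g$ and integrated smallness of $f$, forces $J$ to be Cauchy at infinity. In Case (ii) there exist $\epsilon_0,\gamma_0,\tau_0>0$ such that $\mathrm{osc}_{[T-\gamma_0,T]}J\ge\epsilon_0$ for all $T\ge\tau_0$; I would then pick $T^*$ large with $J(T^*)\ge J^+-\delta$ and $J(t)\le J^++\delta$ on $[T^*-N,T^*]$, and $\tau^*\in[T^*-\gamma_0,T^*]$ with $J(\tau^*)\le J^+-\epsilon_0+\delta$, and integrate (\ref{AH4}) over $[\tau^*,T^*]$ aiming for a contradiction with $J(T^*)-J(\tau^*)\ge\epsilon_0-2\delta$.

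The technical crux will be a \emph{fading memory} upgrade of Theorem 7.1 from global to local running extrema: if $\widetilde v_t$ equals $v_t$ on $[t-N,t]$ and equals $1$ on $[0,t-N]$, then $|F(t,y,v_t(\cdot))-F(t,y,\widetilde v_t(\cdot))|=O(e^{-N/p})$ uniformly in $y>0$ and in bounded $v_t$. This is what lets the sign-at-global-extremum statement replace ``global'' by ``over $[t-N,t]$'' up to an error $O(e^{-N/p})$. I expect it to follow from the explicit factor $e^{-(t-s)/p}$ in the first integral of (\ref{Y4}) together with $h$ bounded, and for the second integral from the lower bound $z(s)\ge e^{(t-s)/p}y$ of (\ref{X4}) combined with $\sup_y y|h'(y)|<\infty$, which make the integrand there of size $O(e^{-(t-s)/p})$. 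Once this fading memory is in hand, applying Theorem 7.1 to $\widetilde v_t$ gives $f(t,v_t(\cdot))\ge -Ce^{-N/p}$ at any local maximum of $J$ on $[t-N,t]$; taking $N$ large and $\delta$ small closes both cases and yields $J^+=J^-$, whence $I(t)\ra I_\infty$ with $I_\infty\ge c_M>0$.
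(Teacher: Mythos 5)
Your proposal correctly identifies the architecture of the paper's proof: global existence from Theorem 1.1 and Lemma 4.1, the $O(e^{-t/p})$ decay of $g$ coming from (\ref{AA*4}) and the a priori bounds, the sign of $f$ at running extrema supplied by Theorem 7.1, the Yorke-style dichotomy of Proposition 6.2, and the need for a ``fading-memory'' estimate to pass from global to local extrema. This is the paper's strategy, and your sketches of the exponential weights that make the tail of (\ref{Y4}) negligible for $y\ge\ve_0$ (not all $y>0$, but that is harmless since $dI$ is supported there) are the right estimates.

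However, the single modified comparison function you propose --- $\widetilde v_t=v_t$ on $[t-N,t]$, $\widetilde v_t\equiv 1$ on $[0,t-N]$ --- does not close either case. In Case (i) the paper integrates (\ref{AH4}) over the running-max set $\{t>T_{\ve,\ga,\tau}:I(t)=\sup_{[T_{\ve,\ga,\tau},t]}I\}$, using the identity (\ref{R6}). For such $t$ one only knows $v_t\le1$ on $[T_{\ve,\ga,\tau},t]$; on $[T_{\ve,\ga,\tau}-\ga,T_{\ve,\ga,\tau}]$ the value $v_t(s)=\{I(s)/I(t)\}^{1/p}$ can be far below $1$ once $I(t)$ has grown, so overwriting it by $1$ is an $O(1)$ change, not $O(e^{-N/p})$. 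The paper instead replaces $v_t$ on that quiet window by the constant $v_t(T_{\ve,\ga,\tau})\le1$ --- keeping $\widetilde v_t\le1$ while making the error $O(\ve)$ by the small-oscillation hypothesis --- and only truncates $[0,T_{\ve,\ga,\tau}-\ga]$, giving the $O(e^{-\ga/p})$ term in (\ref{H8}). In Case (ii) the defect is more serious: the times $t\in[\tau^*,T^*]$ you integrate over are not running maxima of $J$ on $[t-N,t]$, so your bound $f\ge -Ce^{-N/p}$ does not apply to them, and $\int_{\tau^*}^{T^*}f$ is not a priori small. The paper resolves this by rescaling, $\widetilde v_t(s)=\{I(s)/(I^+_\infty+\del)\}^{1/p}\le1$, which satisfies the hypothesis of Theorem 7.1 at \emph{every} $t$ in the window, at the cost of an error proportional to $J(t)=I^+_\infty+\del-I(t)$ rather than $O(e^{-N/p})$; this then produces the Gronwall inequality (\ref{K8}) whose integration over $[\tau_{\del,N},T_{\del,N}]$ gives the contradiction. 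So your route is the right one, but closing it requires two distinct modified comparison functions, neither of which is ``set $v_t$ to $1$ on the distant past.''
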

\begin{proof}
The global existence of a $C^1$ solution to the DDE (\ref{AH4}), (\ref{AI4})  follows from Theorem 1.1 and Lemma 4.1.  Since property (d) of the functional $I(\cdot)$ implies that $\rho(\xi(\cdot,t)\ge 0, \ t\ge 0,$ and $c_M\le I(\xi(\cdot,t))\le I(0(\cdot)),  \ t\ge 0$, we have that the function $G$ of (\ref{AA*4}) satisfies an inequality $|G(t,y,v_t(\cdot))|\le C_1e^{-t/p}, \ y\ge\ve_0,  \ t\ge 0, $ for some constant $C_1$, where $\ve_0>0$ is the constant occurring in property (a) of $I(\cdot)$.  Hence from Theorem 1.1 we conclude that  the function $g$ of (\ref{AI4}) satisfies an inequality $|g(t,v_t(\cdot))|\le C_2e^{-t/p},   \ t\ge 0, $ for some constant $C_2$. 

To prove convergence of $I(t)$ as $t\ra\infty$, we first assume that for any $\ve,\ga>0,\  \tau>\ga$ and $\ve<1/2$, there exists $T_{\ve,\ga,\tau}>\tau$ such that $|I(t)/I(s)-1|<\ve$ for $t,s\in[T_{\ve,\ga,\tau}-\ga, T_{\ve,\ga,\tau}]$.  For $t>T_{\ve,\ga,\tau}$ we set $I_{\rm max}(t)=\sup_{T_{\ve,\ga,\tau}<s<t}I(s)$ and consider $T>T_{\ve,\ga,\tau}$ such that $I(T)=I_{\rm max}(T)$.  Using an identity similar to (\ref{R6}), we have from (\ref{AH4}) that
\begin{multline} \label{A8}
\frac{1}{p}\log\left[\frac{I(T)}{I(T_{\ve,\ga,\tau})}\right] + \\
\int_{(T_{\ve,\ga,\tau},T)-\{T_{\ve,\ga,\tau}<t<T:I_{\rm max}(t)>I(t)\}} f(t,v_t(\cdot)) \ dt \ \le C_2pe^{-T_{\ve,\ga,\tau}/p} \ .
\end{multline}
We can estimate the second term on the LHS of (\ref{A8}) by using Theorem 7.1.  First we write the function $F$ of (\ref{Y4}) as $F=F_1+F_2$, where 
\be \label{B8}
F_1(t,y,v_t(\cdot)) \ = \ \int_{T_{\ve,\ga,\tau}-\ga}^t ds \left\{  \frac{1}{p}h\left(   \frac{z(s)}{v_t(s)}  \right) e^{-(t-s)/p}v_t(s)
- \left(1+\frac{y}{p}\right)   h'\left(    \frac{z(s)}{v_t(s)}  \right)\right\}  \ .
\ee
It is easy to see that $|F_2(t,y,v_t(\cdot))|\le C_3e^{-\ga/p}e^{-(t-T_{\ve,\ga,\tau})/p}, \ y\ge\ve_0,  \ t\ge T_{\ve,\ga,\tau}, $ for some constant $C_3$. Next we define the function $\tilde{v}_t(\cdot)$ as
\begin {eqnarray} \label{C8}
\tilde{v}_t(s) \ &=& \ v_t(T_{\ve,\ga,\tau}), \quad T_{\ve,\ga,\tau}-\ga<s<T_{\ve,\ga,\tau} \ , \\
\tilde{v}_t(s) \ &=& \ v_t(s), \quad T_{\ve,\ga,\tau}<s<t \ .  \nonumber 
\end{eqnarray}
We define also the function $\tilde{z}(s), \  \quad T_{\ve,\ga,\tau}-\ga<s<t,$ by the formula (\ref{X4}) with $\tilde{v}_t(\cdot)$ in place of $v_t(\cdot)$.   Defining now the function $\tilde{F}_1$ by (\ref{B8}) with $\tilde{v}_t(\cdot), \ \tilde{z}(\cdot)$ in place of $v_t(\cdot), \ z(\cdot)$, we have that
 \begin{multline} \label{D8}
F_1(t,y,v_t(\cdot))- \tilde{F}_1(t,y,\tilde{v}_t(\cdot)) \ = \\
 \frac{1}{p}\int_{T_{\ve,\ga,\tau}-\ga}^{T_{\ve,\ga,\tau}} ds  \   \left[ h\left(   \frac{z(s)}{v_t(s)}  \right)v_t(s)-  h\left(   \frac{\tilde{z}(s)}{\tilde{v}_t(s)}  \right)\tilde{v}_t(s)\right]e^{-(t-s)/p} \\
- \left(1+\frac{y}{p}\right) \int_{T_{\ve,\ga,\tau}-\ga}^{T_{\ve,\ga,\tau}} ds  \   \left[ h'\left(   \frac{z(s)}{v_t(s)}  \right)-  h'\left(   \frac{\tilde{z}(s)}{\tilde{v}_t(s)}  \right)\right]  \ .
\end{multline}
It follows from (\ref{D8}) and the assumptions on the function $h(\cdot)$ that there is a constant $C_4$ such that
 \begin{multline} \label{E8}
|F_1(t,y,v_t(\cdot))- \tilde{F}_1(t,y,\tilde{v}_t(\cdot))| \ \le \\
C_4\ga e^{-(t-T_{\ve,\ga,\tau})/p}\sup_{T_{\ve,\ga,\tau}-\ga<s<T_{\ve,\ga,\tau}}\left[ \frac{|v_t(s)-\tilde{v}_t(s)|}{v_t(s)}+ 
\frac{v_t(s)}{z(s)}\left| \frac{z(s)}{v_t(s)} -  \frac{\tilde{z}(s)}{\tilde{v}_t(s)} \right|\right] \ ,
\end{multline}
provided $y\ge \ve_0$.
From our assumptions on $I(\cdot)$ in the interval  $[T_{\ve,\ga,\tau}-\ga,T_{\ve,\ga,\tau}]$, we see that the first term in the supremum on the RHS of (\ref{E8}) is bounded  above by $\ve$.  We have also from (\ref{X4}) that $|1-\tilde{z}(s)/z(s)|<\ve$ for $s\in [T_{\ve,\ga,\tau}-\ga,T_{\ve,\ga,\tau}]$, whence we conclude that the supremum on the RHS of (\ref{E8}) is bounded by $4\ve$.

To estimate the second term on the LHS of (\ref{A8}) we write $f(t,v_t(\cdot))=f_1(t,v_t(\cdot))+f_2(t,v_t(\cdot))$, corresponding to the decomposition $F=F_1+F_2$. 
From our bound on $F_2$ we see there is a constant $C_5$ such that
\be \label{F8}
\int_{T_{\ve,\ga,\tau}}^\infty |f_2(t,v_t(\cdot))| \ dt \ \le \  C_5e^{-\ga/p} \ .
\ee
Letting $\tilde{f}_1(t,\tilde{v}_t(\cdot))$ be the function (\ref{AI4}) corresponding to $\tilde{F}_1$ in place of $F$, we have from (\ref{E8}) that
\be \label{G8}
\int_{T_{\ve,\ga,\tau}}^\infty |f_1(t,v_t(\cdot))-\tilde{f}_1(t,\tilde{v}_t(\cdot))| \ dt \ \le \  C_6\ga\ve \ ,
\ee
for some constant $C_6$. Observe next that by Theorem 7.1 one has $\tilde{F}_1(t,y,\tilde{v}_t(\cdot))\le \tilde{F}_1(t,y,1(\cdot))$ for $t>T_{\ve,\ga,\tau}$ such that $I(t)=I_{\rm max}(t)$, whence $\tilde{f}_1(t,\tilde{v}_t(\cdot))\ge 0$. We conclude then from (\ref{A8}), (\ref{F8}), (\ref{G8}) that
\be \label{H8}
\frac{1}{p}\log\left[\frac{I(T)}{I(T_{\ve,\ga,\tau})}\right]  \ \le \  C_2pe^{-\tau/p}+C_5e^{-\ga/p}+C_6\ga\ve \ .
\ee
 Since the constants $C_2,C_5,C_6$ in (\ref{H8}) are independent of $\ve,\ga,\tau$, we conclude that for any $\del>0$ there exists $T_\del>0$ such that $\sup_{t>T_\del}[I(t)/I(T_\del)-1]<\del$. Since we can make an exactly analogous argument with the function $I_{\rm min}(t)=\inf_{T_{\ve,\ga,\tau}<s<t}I(s)$, we conclude that $\lim_{t\ra\infty}I(t)=I_\infty>0$ exists. 
 
 Alternatively there exists $\ve_0,\ga_0>0, \ \tau_0>\ga_0$ such that $\sup_{s,t\in[T-\ga_0,T]}|I(t)/I(s)-1|\ge \ve_0$ for all $T\ge \tau_0$. Letting $I^+_\infty=\limsup_{t\ra\infty}I(t)$, there exists  for any $\del>0, \ N=1,2,..,$ a  time $T_{\del,N}>\max[\tau_0,N]$  such that $I(T_{\del,N})\ge I_\infty^+-\del$ and $I(t)\le I_\infty^++\del$ for $T_{\del,N}-N\le t\le T_{\del,N}$.  Since the oscillation of $I(\cdot)$ in the interval $[T_{\del,N}-\ga_0,T_{\del,N}]$ exceeds $\ve_0$,  there exists $\tau_{\del,N}\in[T_{\del,N}-\ga_0,T_{\del,N}]$ such that $I(\tau_{\del,N})\le (I_\infty^++\del)/(1+\ve_0)$.  We proceed similarly to before by writing the function $F$ of (\ref{Y4}) as $F=F_1+F_2$, where $F_1$ is given by (\ref{B8}), but with the interval of integration now $[T_{\del,N}-N,t]$ in place of $[T_{\ve,\ga,\tau}-\ga,t]$. As previously, one has the bound $|F_2(t,y,v_t(\cdot))|\le C_3e^{(\ga_0-N)/p}, \ y\ge\ve_0,  \ t\in[T_{\del,N}-\ga_0,T_{\del,N}]$. Evidently $F_1(t,y,v_t(\cdot))$ depends only on the values of $I(s)$ for $s\in [T_{\del,N}-N,t]$. We define $\tilde{v}_t(s)=I(s)/(I^+_\infty+\del)$ for  $s\in [T_{\del,N}-N,t]$, and $\tilde{F}_1(t,y,\tilde{v}_t(\cdot))$ in the same way as $F_1(t,y,v_t(\cdot))$ but with $\tilde{v}_t(\cdot)$ replacing $v_t(\cdot)$. The difference $F_1(t,y,v_t(\cdot))-\tilde{F}_1(t,y,\tilde{v}_t(\cdot))$ has the representation (\ref{D8}), but with the interval of integration now $[T_{\del,N}-N,t]$ in place of $[T_{\ve,\ga,\tau}-\ga,T_{\ve,\ga,\tau}]$.  Instead of (\ref{E8}) we have the estimate
 \begin{multline} \label{J8}
|F_1(t,y,v_t(\cdot))- \tilde{F}_1(t,y,\tilde{v}_t(\cdot))| \ \le \\
C_7\int_{T_{\del,N}-N}^t ds \ e^{-(t-s)/p}\left[ \frac{|v_t(s)-\tilde{v}_t(s)|}{v_t(s)}+ 
\frac{v_t(s)}{z(s)}\left| \frac{z(s)}{v_t(s)} -  \frac{\tilde{z}(s)}{\tilde{v}_t(s)} \right|\right] \ ,
\end{multline}
for some constant $C_7$. Setting $J(t)=I^+_\infty+\del-I(t),$ it follows from (\ref{J8}) that there is a constant  $C_8$ such that $|F_1(t,y,v_t(\cdot))- \tilde{F}_1(t,y,\tilde{v}_t(\cdot))| \le C_8J(t)$ for $t\in [T_{\del,N}-\ga_0,T_{\del,N}]$. We estimate the second term on the LHS of (\ref{AH4}) by  writing $f(t,v_t(\cdot))=f_1(t,v_t(\cdot))+f_2(t,v_t(\cdot))$, corresponding to the decomposition $F=F_1+F_2$. 
From our bound on $F_2$ we see there is a constant $C_9$ such that $|f_2(t,v_t(\cdot))|\le C_9e^{-N/p}$ for $t\in [T_{\del,N}-\ga_0,T_{\del,N}]$.
Letting $\tilde{f}_1(t,\tilde{v}_t(\cdot))$ be the function (\ref{AI4}) corresponding to $\tilde{F}_1$ in place of $F$, we also have that
$ |f_1(t,v_t(\cdot))-\tilde{f}_1(t,\tilde{v}_t(\cdot))|\le C_{10}J(t)$  for some constant $C_{10} $ if $t\in [T_{\del,N}-\ga_0,T_{\del,N}]$. Furthermore,  Theorem 7.1 implies that $\tilde{F}_1(t,y,\tilde{v}_t(\cdot))\le \tilde{F}_1(t,y,1(\cdot))$ for $t\in [T_{\del,N}-\ga_0,T_{\del,N}]$, whence $\tilde{f}_1(t,\tilde{v}_t(\cdot))\ge 0$ if $t\in [T_{\del,N}-\ga_0,T_{\del,N}]$. It follows now  that
\be \label{K8}
\frac{dJ(t)}{dt}+C_{11}J(t) \ \ge \ -C_{12}e^{-N/p}-C_{13}e^{-t/p} \ , \quad t\in[T_{\del,N}-\ga_0,T_{\del,N}] \ ,
\ee
for some positive constants $C_{11},C_{12},C_{13}$. 
Note that in deriving (\ref{K8}) we use the fact that the function $t\ra J(t)$ is non-negative.  Integrating (\ref{K8}) over the interval $[\tau_{\del,N},T_{\del,N}]$, we obtain the inequality
\be \label{L8}
J(T_{\del,N} )\ \ge e^{-C_{11}\ga_0}J(\tau_{\del,N})-C_{12}\ga_0e^{-N/p}-C_{13}pe^{\ga_0/p} e^{-T_{\del,N}/p} \ .
\ee
Observe  that $J(T_{\del,N})\le 2\del$ and $J(\tau_{\del,N})\ge\ve_0(I^+_\infty+\del)/(1+\ve_0)$.  Since $T_{\del,N}\ge N$, the inequality (\ref{L8}) yields a contradiction if $\del>0$ is sufficiently small and $N$ sufficiently large. 
\end{proof}
\begin{proof}[Proof of Theorem 1.2]
The result follows from Theorem 3.1 once we show that for any $\ve>0$ there exists $T_\ve>0$ such that $\|\xi(\cdot,T_\ve)-\xi_p(T_\ve)\|_{1,\infty}\le\ve$. To see this first let $\xi_0(\cdot,\cdot)$ be defined as
\be \label{M8}
\xi_0(y,t) \ = \ \int_0^t ds \ h(y(s))\exp\left[-\int_s^t\rho(s') \ ds'\right] \ , \quad y,t>0 \ .
\ee
Note that $\xi_0(\cdot,\cdot)$ is not the same as the solution to (\ref{A1}) with $\xi(\cdot,0)\equiv 0$ since the function $\rho(\cdot)$ depends on the initial data. From (\ref{D2}), (\ref{A4}) and the bounds on the function $I(\cdot)$ we see there is a constant $C$ such that $\|\xi(\cdot,t)-\xi_0(\cdot,t)\|_{1,\infty}\le Ce^{-t/p}, \ t\ge 0$.  For any $\nu$ with $0<\nu<1$ let $\tau_\nu>0$ be such that $|[I(s)/I(t)]^{1/p}-1|\le\nu$ if $s,t\ge \tau_\nu$.  We see then from (\ref{C2}) that for  $t>\tau_\nu,$
\be \label{N8}
(1-\nu)y_p(s) \ \le y(s) \ \le (1+\nu)y_p(s) ,  \quad \tau_\nu<s<t \ ,
\ee
where $y_p(\cdot)$ is the solution to (\ref{B2}) corresponding to the equilibrium $\rho=1/p$.  For $t>\tau_\nu$ let $\xi_{0,\nu}(\cdot,\cdot)$ be defined as in (\ref{M8}) but with the interval of integration $[0,t]$ replaced by the interval $[\tau_\nu,t]$. We similarly define the functions $\tilde{\xi}_0(\cdot,\cdot)$ and $\tilde{\xi}_{0,\nu}(\cdot,\cdot)$ by replacing the function $y(\cdot)$ in (\ref{M8}) with $y_p(\cdot)$ and $\rho(\cdot)$ with $1/p$.   Then there is a constant $C$  such  that
\begin{eqnarray} \label{O8}
\|\xi_{0,\nu}(\cdot,t)-\xi_0(\cdot,t)\|_{1,\infty} \ &\le&   Ce^{-(t-\tau_\nu)/p} \ , \quad t>\tau_\nu \ , \\
\|\tilde{\xi}_{0,\nu}(\cdot,t)-\xi_p(t)\|_{1,\infty} \ &\le&   Ce^{-(t-\tau_\nu)/p} \ , \quad t>\tau_\nu \ . \nonumber
\end{eqnarray}
From (\ref{N8}) we also have that $\|\xi_{0,\nu}(\cdot,t)-\tilde{\xi}_{0,\nu}(\cdot,t)\|_{1,\infty} \le C\nu, \ t>\tau_\nu,$ for some constant $C$.  Hence by choosing $C\nu=\ve/2$ and $T_\ve>\tau_\nu$ sufficiently large we  conclude from (\ref{O8}) that $\|\xi(\cdot,T_\ve)-\xi_p(T_\ve)\|_{1,\infty}\le\ve$.
\end{proof}

\end{document}